\documentclass[11pt]{article}
\usepackage{amsmath, amsthm, amssymb, amsfonts, mathrsfs, mathtools}
\usepackage{graphicx}
\usepackage{makeidx}
\usepackage[left=2cm,right=2cm,top=2cm,bottom=2cm]{geometry}
\usepackage{caption}
\usepackage{subcaption}
\usepackage[section]{placeins}
\usepackage{enumitem}
\usepackage{tikz}
\usepackage{stmaryrd}
\usepackage{authblk}
\allowdisplaybreaks

\usepackage{tikz}
\usetikzlibrary{arrows.meta}

\usetikzlibrary{decorations.pathreplacing}
\tikzstyle{vertex}=[auto=left,circle,draw=black,fill=white, inner sep=1.5]

\usepackage{hyperref}
\hypersetup{colorlinks=true, citecolor={blue}, linkcolor=blue, filecolor=magenta, urlcolor=cyan}

\newtheorem{theorem}{Theorem}[section]
\newtheorem{prop}[theorem]{Proposition}
\newtheorem{lema}[theorem]{Lemma}
\newtheorem{corollary}[theorem]{Corollary}
\newtheorem{definition}{Definition}
\newtheorem{ex}{Example}[section]  

\newtheorem{remark}[theorem]{Remark}

\def \x {{\mathbf x}}

\title{Nonexistence results of generalized bent functions from $\mathbb{Z}_3^n$ to $ \mathbb{Z}_m$}

\author[1]{Priya Dhankhar}
\author[2]{Sanjay Kumar Singh}
\affil[ ]{\small{\textsuperscript{1,2}Department of Mathematics, Indian Institute of Science Education and Research Bhopal, India.}}
\affil[ ]{ {\textsuperscript{1}priya22@iiserb.ac.in}
\textsuperscript{2}sanjayks@iiserb.ac.in}

\date{}

\begin{document}
	\maketitle
	
	\vspace{-0.3in}
	
\begin{center}{\textbf{Abstract}}\end{center} 

In this paper, we investigate generalized bent functions (GBFs) from $\mathbb{Z}_3^n$ to $\mathbb{Z}_m$. We show that GBFs exist whenever $3$ divides $m$, while several nonexistence results are obtained when $3\nmid m$. In particular, we prove that no GBFs exist for $n=1,2$ when $m$ is odd and not divisible by $3$. For the case $n=3$, we establish the nonexistence of GBFs $f:\mathbb{Z}_3^3 \rightarrow \mathbb{Z}_{5\cdot11^r}$ for all nonnegative integers $r$. Finally, we show that no GBF exists from $\mathbb{Z}_3$ to $\mathbb{Z}_{2m'}$ and $\mathbb{Z}_3^2$ to $\mathbb{Z}_{2m'}$, where $m'$ is odd and not divisible by $3$.

\vspace*{0.3cm}
\noindent 
\textbf{Keywords.} Generalized bent functions $\cdot$  Minimal relation $\cdot$ Vanishing sum $\cdot$ Nonexistence \\
\textbf{Mathematics Subject Classifications:} 11A07 $\cdot$ 16S34 $\cdot$ 05B10 $\cdot$ 94A15

\section{Introduction}
The study of bent functions originated in the work of Rothaus \cite{rothaus1976bent}, where Boolean functions $f:\mathbb{Z}_2^n \rightarrow \mathbb{Z}_2$ were considered. It is well known that a Boolean bent function 
$f:\mathbb{Z}_2^n \rightarrow \mathbb{Z}_2$ exists if and only if $n$ is even (see \cite{rothaus1976bent}). Since then, the concept of bent functions has been generalized and investigated in several settings 
(see \cite{tokareva2011generalizations} for a detailed survey). In particular, bent functions can be studied  over arbitrary finite abelian groups.

 Let $G$ be a finite abelian group. Let $m \in \mathbb{Z}_{>0}$ and $\zeta_m$ denote a primitive $m$-th root of unity. Let $\widehat{G}$ denote the character group of $G$.
\begin{definition}
  A function $f:G \rightarrow \mathbb{Z}_m$ is called a \textbf{Generalized Bent function (GBF)} if
    \begin{align}\label{fourier_transformation}
        \left | \sum_{x\in G} \zeta_{m}^{f(x)} \chi(x) \right |^{2} =|G|\hspace{2mm} \text{for every $\chi \in \widehat{G}$}.
    \end{align}

\end{definition}

\noindent For $G=\mathbb{Z}_2^n$ and $m=2$, we obtain the Boolean bent function studied by Rothaus. For $G=\mathbb{Z}_q^n$ and $m=q$,  Kumar, Scholtz, and Welch investigated functions of the form $f : \mathbb{Z}_q^n \to \mathbb{Z}_q$ and established the following fundamental existence results in the theory of GBFs. 

\begin{theorem} \label{P.V.Kumar_existence_resut}
    \cite[Section $IV$]{kumar1985generalized} There exists a GBF 
    $\mathbb{Z}_q^n \rightarrow \mathbb{Z}_q$  whenever $n$ is even or $q \not\equiv 2 \mod 4$.
\end{theorem}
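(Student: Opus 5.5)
The plan is to give two explicit constructions: one for even $n$ (any $q$), and one for odd $n$ when $q\not\equiv 2 \pmod 4$. In both cases the key tool is the Fourier transform over $\mathbb{Z}_q^n$ with $\zeta=\zeta_q$. Every character of $\mathbb{Z}_q^n$ has the form $\chi_w(x)=\zeta^{w\cdot x}$ with $w\in\mathbb{Z}_q^n$. So the bent condition becomes
\[
\Bigl|\sum_{x\in\mathbb{Z}_q^n}\zeta^{f(x)+w\cdot x}\Bigr|^2=q^n \quad\text{for all } w.
\]

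\textbf{Even $n=2k$.} Write $x=(y,z)$ with $y,z\in\mathbb{Z}_q^k$ and take the Maiorana--McFarland function $f(y,z)=y\cdot z$. With $w=(u,v)$, summing over $y$ first gives
\[
\sum_{z}\zeta^{v\cdot z}\sum_{y}\zeta^{y\cdot(z+u)}.
\]
The inner sum is $q^k$ if $z=-u$ and $0$ otherwise, by orthogonality of characters of $\mathbb{Z}_q^k$. The whole sum is therefore $q^k\zeta^{-u\cdot v}$, whose squared modulus is $q^{2k}=q^n$.

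\textbf{Odd $n$ with $q\not\equiv 2\pmod 4$.} Take the quadratic function $f(x)=\sum_{i=1}^n c\,x_i^2$, with $c=(q+1)/2$ when $q$ is odd and $c=1$ when $4\mid q$. The character sum factors as a product of one-dimensional sums $\prod_i S(w_i)$, where $S(a)=\sum_{t\in\mathbb{Z}_q}\zeta^{ct^2+at}$. It therefore suffices to show $|S(a)|^2=q$ for every $a$.

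To compute this, expand
\[
|S(a)|^2=\sum_{t,s}\zeta^{c(t^2-s^2)+a(t-s)}.
\]
Substitute $t=s+h$, so that $t^2-s^2=2sh+h^2$, and sum over $s$. The sum over $s$ is $q$ when $2ch\equiv 0 \pmod q$ and $0$ otherwise.

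1. For $q$ odd, $2c\equiv 1$, so only $h=0$ survives and the total is $q$.
2. For $4\mid q$ and $c=1$, the surviving values are $h=0$ and $h=q/2$. The $h=q/2$ term is $q\,\zeta^{q^2/4+aq/2}$. Since $4\mid q$, the exponent $q^2/4$ is divisible by $q$. The term is then $q(-1)^a$.

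The main obstacle is this second case: it does not give $q$ directly, since $|S(a)|^2=q(1+(-1)^a)$, which is $0$ or $2q$. The one-dimensional quadratic is therefore not bent when $4\mid q$.

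To fix this, I would pair coordinates and keep one leftover coordinate. Since $n$ is odd, write $n=2k+1$ and use $f(x)=\sum_{i=1}^k x_{2i-1}x_{2i}+g(x_n)$, where $g:\mathbb{Z}_q\to\mathbb{Z}_q$ is a one-variable bent function. By the even case, the paired part contributes a factor of modulus $q^k$. The problem thus reduces to $n=1$.

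For $n=1$ with $4\mid q$, I would try the cubic-free choice $g(t)=t^2/2$ interpreted through a lift to $\mathbb{Z}_{2q}$ (the standard Gauss-sum trick). The obstacle is that this value lives in $\mathbb{Z}_{2q}$, not in $\mathbb{Z}_q$. I would instead pick $g$ so that $g(t+h)-g(t)$ is linear in $t$ with a coefficient that is a unit multiple of $h$. Then the autocorrelation $\sum_t\zeta^{g(t+h)-g(t)}$ vanishes for every $h\neq 0$, and a function with vanishing autocorrelation for all nonzero shifts is bent in one variable.

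If no such choice works, I would fall back on citing Kumar--Scholtz--Welch's explicit odd-$n$ construction, using a bent function in $\mathbb{Z}_q^1$ for $q\not\equiv 2\pmod 4$ built from a generalized Gauss sum. The key input there is the known value
\[
\Bigl|\sum_{t\in\mathbb{Z}_q}e^{\pi i\, t^2 (q+1)/q}\Bigr|^2=q
\]
whenever $q\not\equiv 2\pmod 4$.
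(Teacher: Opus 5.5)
The paper gives no proof of its own here; it only cites Kumar--Scholtz--Welch. Your Maiorana--McFarland argument for even $n$ is correct, as are the quadratic construction for odd $q$ and the reduction of odd $n$ to $n=1$ via $\sum_i x_{2i-1}x_{2i}+g(x_n)$.

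\textbf{The gap.} It lies exactly in the case that carries the content of the hypothesis: you never exhibit a bent $g:\mathbb{Z}_q\to\mathbb{Z}_q$ when $4\mid q$, and neither of your suggestions can supply one.

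First, a $g$ whose differences $g(t+h)-g(t)$ are linear in $t$ with unit leading coefficient does not exist on $\mathbb{Z}_q$ for even $q$. Already for $h=1$, sum $g(t+1)-g(t)=\lambda t+\mu$ over $t=0,\dots,q-1$. The left side telescopes to $0$, so $0\equiv \lambda q(q-1)/2\equiv \lambda q/2 \pmod q$. This forces $\lambda$ to be even, hence not a unit.

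Second, the Gauss-sum fallback $e^{\pi i t^2(q+1)/q}=\zeta_{2q}^{(q+1)t^2}$ has $q+1$ odd when $q$ is even. For odd $t$ it is therefore not a $q$-th root of unity. It is a GBF into $\mathbb{Z}_{2q}$ (essentially the Chu sequence), not into $\mathbb{Z}_q$, and it works equally well for $q\equiv 2\pmod 4$.

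That last point is the warning sign. Nothing in your $n=1$ attempts separates $4\mid q$ from $q\equiv 2\pmod 4$, yet for $q=2$ no GBF $\mathbb{Z}_2\to\mathbb{Z}_2$ exists, since $|\pm 1\pm 1|^2\in\{0,4\}$.

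\textbf{A possible fix.} A Frank-type sequence closes the gap. Let $q=4r$, write $t=2k+l$ with $0\le k<2r$ and $l\in\{0,1\}$, and set $g(t)=2k(k+l+r)\bmod q$. For $q=4$ this is $g=(0,0,0,2)$. Since $\zeta_q^2=\zeta_{2r}$,
\[
\sum_{t\in\mathbb{Z}_q}\zeta_q^{g(t)+wt}=\sum_{l\in\{0,1\}}\zeta_q^{wl}\sum_{k=0}^{2r-1}\zeta_{2r}^{k(k+l+r+w)} .
\]
Substitute $k=k_0+rj$ with $0\le k_0<r$ and $j\in\{0,1\}$. The inner sum becomes $\bigl(1+(-1)^{l+w}\bigr)\sum_{k_0=0}^{r-1}\zeta_{2r}^{k_0(k_0+l+r+w)}$, so only $l\equiv w\pmod 2$ survives.

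Write $l+w=2u$. The remaining sum is $\sum_{k_0=0}^{r-1}\zeta_{2r}^{k_0^2+rk_0+2uk_0}$, and its summand is $r$-periodic in $k_0$. Your own shift-and-sum computation then applies: the inner sum becomes $\sum_{k'}\zeta_r^{k'h}$, which gives modulus $\sqrt r$.

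Hence every Fourier coefficient of $g$ has modulus $2\sqrt r=\sqrt q$. Combined with your pairing reduction, this settles all odd $n$ with $4\mid q$.
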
 

Later, Schmidt \cite{schmidt2009quaternary} investigated GBFs 
$f:\mathbb{Z}_2^n \rightarrow \mathbb{Z}_m$ motivated by their applications in CDMA communications.  Various constructions of such GBFs can be found in  \cite{schmidt2009quaternary, tang2017complete, schmidt2009bbz}. The wide range of applications of GBFs in coding theory, cryptography, and information theory makes them an important topic of study.
Moreover, their connections with other combinatorial objects such as relative difference sets and group-invariant Butson Hadamard matrices highlight their significance in combinatorial theory, see \cite{schmidt2019survey} for further details.  

 Note that the condition in Theorem \ref{P.V.Kumar_existence_resut} is sufficient but need not be necessary.  Indeed, when $n$ is odd and $q \equiv 2 \pmod{4}$, GBFs do not necessarily exist. In this setting, several nonexistence results have been established under additional assumptions; see \cite{li2017nonexistence, leung2019nonexistence, lv2025nonexistence}. In recent years, considerable attention has also been devoted to the nonexistence problem for GBFs from $\mathbb{Z}_2^n$ to $\mathbb{Z}_m$; see 
\cite{liu2017nonexistence, leung2020new, leung2023nonexistence}. To the best of our knowledge, no work has been carried out on GBFs from $\mathbb{Z}_3^n$ to $\mathbb{Z}_m$.

\medskip
This paper investigates the existence and nonexistence of GBFs from $\mathbb{Z}_3^n$ to $\mathbb{Z}_m$. Note that the case where $3$ divides $m$ can be resolved easily using known existence results. Consequently, the more interesting and nontrivial situation arises when $3$ does not divide $m$. In this paper, we establish several nonexistence results for GBFs $f : \mathbb{Z}_3^n \to \mathbb{Z}_m$. Our main results are as follows.
\medskip

\noindent \textbf{Theorem} \textit{(Theorem \ref{Nonexistence_of _gbf_for_odd_m})}
Let there exists a GBF $f : \mathbb{Z}_3^n \to \mathbb{Z}_m$, where $m = \prod_{j=1}^t p_j^{\alpha_j}$ with primes satisfying $5 \leq p_1 < p_2 < \cdots < p_t$ and $\alpha_j$ are positive integers, then $t \geq 2$ and $2p_1 + p_2 \leq 3^n$.

\medskip
\noindent \textbf{Corollary} \textit {(Corollary \ref{non_exist_for_n_1_2_and_m_odd})}
 Let $m$ be an odd integer not divisible by $3$. If $n = 1$ or $n = 2$, then there does not exist a GBF $f : \mathbb{Z}_3^n \to \mathbb{Z}_m$.

\medskip
\noindent \textbf{Corollary} \textit {(Corollary \ref{non exist_for_n=3_m_odd})}
There does not exist a GBF $f :\mathbb{Z}_3^3 \rightarrow \mathbb{Z}_{5\cdot 11^r}$. 

\medskip
 \noindent  \textbf{Theorem} \textit {(Theorem \ref{NO_GBF_for_2^k})}
There does not exist a GBF  $ f: \mathbb{Z}_3^n \rightarrow \mathbb{Z}_{2^k}$ for any positive integer $n$.

\medskip
\noindent \textbf{Corollary} \textit {(Corollary \ref{non_exist_n=1_mis even})}
There does not exist a GBF $f: \mathbb{Z}_3 \to \mathbb{Z}_{2m'}$, where $m'$ is an odd positive integer not divisible by $3$.

\medskip
\noindent \textbf{Theorem} \textit {(Theorem \ref{non_exist_n=2and_m_is_even})} 
There does not exist a GBF $f: \mathbb{Z}_3^2 \to \mathbb{Z}_{2m'}$, where $m'$ is an odd positive integer not divisible by $3$.

\medskip
The paper is organised as follows. In Section $2$, we collect preliminary results on group rings and characters, along with results on vanishing sums of roots of unity, which are crucial for our analysis. In Section $3$, we prove  Theorem~\ref{Nonexistence_of _gbf_for_odd_m} and, as a consequence, we obtain Corollary~\ref{non_exist_for_n_1_2_and_m_odd}. Also, for $n=3$, we determine the possible forms of autocorrelation functions and then use them to prove a nonexistence result for GBFs from $\mathbb{Z}_3^3$ to $\mathbb{Z}_{5 \cdot 11^k}$, where $k$ is a positive integer.

In Section~4, we consider the case where $m$ is even and not divisible by $3$. We first prove Theorem~\ref{NO_GBF_for_2^k}. Later in this section, we establish the nonexistence results stated in Corollary~\ref{non_exist_n=1_mis even} and Theorem~\ref{non_exist_n=2and_m_is_even}.

%%%%%%%%%%%%%%%%%%%%%%%%%%%%%%%%%%%%%%%%%%%%%%%%%%%%%%%%%%%%%%%%%%%%%%%%%%%%%%%%%%%%%%%%%%%%%%%%%%%%%%%%%%%%%%%%%%%%%%%%%%%%%%%%%%%%%%%%%%%%%%%%%%%%%%%%%%%%%%%%%%%%%%%%%%%%%%%%%%%%%%%%%%%%%%%%%%%%%%%%%%%%%%%%%%%%%%%%%%%%%%%%%%%%%%%%%%%%%%%%%%%%%%%%%%%%%%%%%%%%%%%%%%%%%%%%%%%%%%%%%%%%%%%%%%%%%%%%%%%%%%%%%%%%%%%%%%%%%%%%%
\section{Preliminaries}
%%%%%%%%%%%%%%%%%%%%%%%%%%%%%%%%%%%%%%%%%%%%%%%%%%%%%%%%%%%%%%%%%%%%%%%%%%%%%%%%%%
This section collects the algebraic and combinatorial tools used in the later nonexistence arguments. We first recall the required facts on group rings and characters, then reformulate GBFs in terms of autocorrelation functions. Finally, we recall the results on vanishing sum together with the existence results that isolate the case $3\nmid m$.
\subsection{Group ring and character theory}

Group rings and characters of abelian groups play a significant role in the study of generalized bent functions.  
Let $G$ be a finite abelian group of order $v$, and $1_G$ denote the identity element of $G$. A character of $G$ is a group homomorphism $\chi : G \rightarrow \mathbb{C}^*$, that is,
\[
\chi(ab)= \chi(a)\chi(b) \quad \text{for all } a,b \in G.
\] 
Let $\widehat{G}$ denote the set of all characters of $G$. Then $\widehat{G}$ forms a group under pointwise multiplication, defined by
\[
(\chi \chi')(g) = \chi(g)\chi'(g), \quad \text{for all } g \in G,
\]
and is isomorphic to $G$.  The character $\chi_0$ with $\chi_0(g)=1$ for all $g \in G$ is known as the principal character, and it is the identity element of $\widehat{G}$. Since
\[
\chi(a)^v = \chi (a^v) =\chi(1_G)=1 \hspace{3mm}  \text{for all} \hspace{3mm} a \in G.
\]
Therefore, $\chi(a)$ is a root of unity. 
The group ring is a useful tool in the theory of characters. Let $R$ be a ring. The group ring $R[G]$ is an $R$-module with the elements of $G$ forming a basis.  
Each element of $R[G]$ can be written as  
\[
D = \sum_{g \in G} a_g g, \quad a_g \in R,
\]  
where the scalars $a_g$ are called the \textit{coefficients} of $D$. The \textit{support} of a group ring element $D$ is defined by  
\[
\operatorname{supp}(D) = \{ g \in G : a_g \neq 0 \}.
\]

\noindent For $R = \mathbb{C}$ and $D = \sum_{g \in G} a_g g \in \mathbb{C}[G]$, we define  
\[
D^{(-1)} = \sum_{g \in G} \overline{a_g}\, g^{-1},
\]  
where $\overline{a_g}$ denotes the complex conjugate of $a_g$.  
We also define  
\[
|D| = \sum_{g \in G} a_g 
\quad \text{and} \quad 
\|D\| = \sum_{g \in G} |a_g|.
\]
When $R = \mathbb{Z}$, a partial order can be defined on the elements of the group ring $\mathbb{Z}[G]$. Let $D=\sum_{g \in G} a_g g$ and $D'=\sum_{g \in G} b_g g$ be two elements of $\mathbb{Z}[G]$, we write $D\geq D'$ whenever $a_g \geq b_g$ for every $g\in G$.\\
Define $\mathbb{N}[G]$ by
\begin{align*}
    \left\{ \sum_{g\in G}a_ig~|~ a_i \in \mathbb{Z},~ a_i \geq 0 \right\}.
\end{align*}

Furthermore, any group homomorphism $\phi : G \to G$ extends naturally to a ring homomorphism on $R[G]$, given by  
\[
\phi(D) = \sum_{g \in G} a_g \phi(g).
\]

\noindent For $D = \sum_{g \in G} a_g g$ and $\chi \in \widehat{G}$, define
\[
\chi(D) = \sum_{g \in G} a_g \chi(g).
\]
The following result is known as the Fourier inversion formula.
\begin{theorem}\cite{beth1999design}\label{Fourier Inversion Formula}
    Let $G$ be a finite abelian group and $D=\sum_{g\in G}a_gg$ be an element of the group ring $\mathbb{C}[G]$. Then the coefficients of $D$ are given as 
    \begin{align*}
        a_g = \frac{1}{|G|}\sum_{\chi \in \widehat{G}}\chi (Dg^{-1}).
    \end{align*}
Moreover, if $D$ and $D'$ are elements of the group ring $\mathbb{C}[G]$  with $\chi(D)=\chi(D')$ for all characters $\chi$ of $G$, then $D=D'$.
\end{theorem}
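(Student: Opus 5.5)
The plan is to use the orthogonality relations for characters of the finite abelian group $G$, applied to the linear functional $\chi \mapsto \chi(Dg^{-1})$. First, I will compute $\chi(Dg^{-1})$ directly from the definitions. Since $\chi$ extends linearly to $\mathbb{C}[G]$ and is multiplicative on group elements,
\[
\chi(Dg^{-1}) = \sum_{h\in G} a_h \chi(hg^{-1}) = \sum_{h \in G} a_h \chi(h)\overline{\chi(g)}.
\]
Here I use that $\chi(g)$ is a root of unity, as noted above, so $\chi(g^{-1}) = \chi(g)^{-1} = \overline{\chi(g)}$. Summing over $\widehat{G}$ and interchanging the finite sums gives
\[
\sum_{\chi \in \widehat{G}} \chi(Dg^{-1}) = \sum_{h\in G} a_h \sum_{\chi\in\widehat{G}} \chi(hg^{-1}).
\]

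The key step is the orthogonality relation: for $x \in G$, $\sum_{\chi\in\widehat{G}}\chi(x)$ equals $|G|$ if $x=1_G$ and $0$ otherwise. The case $x=1_G$ is immediate, since $|\widehat{G}|=|G|$ by the stated isomorphism $\widehat{G}\cong G$. For $x\neq 1_G$, I need a character $\psi$ with $\psi(x)\neq 1$. Given such a $\psi$, the map $\chi\mapsto\psi\chi$ permutes $\widehat{G}$, so the sum $S$ satisfies $S=\psi(x)S$, which forces $S=0$.

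Existence of $\psi$ is the only step with real content, and I expect it to be the main obstacle. I would get it from the structure theorem: write $G\cong \mathbb{Z}_{n_1}\times\cdots\times\mathbb{Z}_{n_k}$, so that $x$ has some coordinate $x_i\not\equiv 0 \pmod{n_i}$. Then $\psi(y)=\zeta_{n_i}^{y_i}$ is a character with $\psi(x)\neq 1$. This also justifies $|\widehat{G}|=|G|$, if one does not want to assume it.

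With orthogonality in hand, the inner sum is $|G|$ when $h=g$ and $0$ otherwise, so the total equals $|G|a_g$, which proves the formula for $a_g$. For the uniqueness statement, suppose $\chi(D)=\chi(D')$ for all $\chi$. Then for each $g$,
\[
\chi(Dg^{-1}) = \chi(D)\chi(g^{-1}) = \chi(D')\chi(g^{-1}) = \chi(D'g^{-1}),
\]
where multiplicativity of $\chi$ on $\mathbb{C}[G]$ follows from linearity and multiplicativity on group elements. The inversion formula then gives equal coefficients, so $D=D'$.
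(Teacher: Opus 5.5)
Your proof is correct and is the standard one. The paper gives no argument of its own here, only a citation to Beth--Jungnickel--Lenz (Ch.~VI, Lemma~3.5), where the inversion formula is derived from the same orthogonality relation $\sum_{\chi\in\widehat{G}}\chi(x)=0$ for $x\neq 1_G$ (and $=|G|$ for $x=1_G$), and your uniqueness step via multiplicativity of $\chi$ on $\mathbb{C}[G]$ (equivalently, applying the formula to $D-D'$) is likewise fine.
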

\begin{proof}
    A proof can be found in \cite[Ch.~VI, Lemma~3.5]{beth1999design}.
\end{proof}
%%%%%%%%%%%%%%%%%%%%%%%%%%%%%%%%%%%%%%%%%%%%%%%%%%%%%%%%%%%%%%%%%%%%%%%%%%
%%%%%%%%%%%%%%%%%%%%%%%%%%%%%%%%%%%%%%%%%%%%%%%%%%%%%%%%%%%%%%%%%%%%%%%%%%%%%%%%%%%%%
\subsection{GBFs in terms of autocorrelation function}
In this section, we use group rings and character theory to describe generalized bent functions (GBFs) in terms of group ring elements, following the approach of \cite{leung2020new}. Let $C_m$ denote the multiplicative cyclic group of order $m$, and let $g$ be a generator of $C_m$.

\begin{definition}
   Let $f: G \rightarrow \mathbb{Z}_m$ be a function. Define an element $B_f$ of the group ring $\mathbb{Z}[\zeta_m][G]$ by
   \begin{align}
       B_f:= \sum_{x\in G}\zeta_m^{f(x)}x.
       \end{align}
\end{definition}
\noindent For a finite abelian group $G$, we may identify $\widehat{G} = \{ \chi_v \mid v \in G \}$. Moreover,
\begin{align}\label{chi_v and fourier transformation}
\chi_v(B_f) = \sum_{x \in G} \zeta_m^{f(x)} \chi_v(x).
\end{align}
Thus, a function $f : G \to \mathbb{Z}_m$ is a GBF if and only if $|\chi_v(B_f)|^2 = |G|$ for all $v \in G$.

\begin{prop}\label{equiv_def_of_gbf}
    If $f: G \rightarrow \mathbb{Z}_m$ be a function. Then $f$ is a GBF if and only if 
    \begin{align}
        B_f B_f^{(-1)} =|G|\hspace{2mm}\text{in} \hspace{2mm} \mathbb{Z}[\zeta_m][G].
    \end{align}
\end{prop}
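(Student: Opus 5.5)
The approach is to apply characters to both sides and invoke the uniqueness half of Theorem~\ref{Fourier Inversion Formula}. Here $|G|$ on the right-hand side denotes the group ring element $|G|\cdot 1_G$. The key observation is that for every $\chi \in \widehat{G}$ we have $\chi(D^{(-1)}) = \overline{\chi(D)}$ for any $D \in \mathbb{C}[G]$.

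First I verify this identity. Since $\chi(g)$ is a root of unity, $\chi(g^{-1}) = \chi(g)^{-1} = \overline{\chi(g)}$. Therefore
\[
\chi(D^{(-1)}) = \sum_{g\in G}\overline{a_g}\,\chi(g^{-1}) = \overline{\sum_{g \in G} a_g \chi(g)} = \overline{\chi(D)}.
\]
Characters extend to ring homomorphisms $\mathbb{C}[G]\to\mathbb{C}$, so
\[
\chi(B_fB_f^{(-1)}) = \chi(B_f)\overline{\chi(B_f)} = |\chi(B_f)|^2 .
\]
Also $\chi(|G|\cdot 1_G) = |G|$.

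For the direction ($\Leftarrow$), suppose $B_fB_f^{(-1)}=|G|$. Applying $\chi_v$ to both sides gives $|\chi_v(B_f)|^2=|G|$ for every $v\in G$. By \eqref{chi_v and fourier transformation}, this is exactly \eqref{fourier_transformation}, so $f$ is a GBF.

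For the direction ($\Rightarrow$), suppose $f$ is a GBF. Then for every character $\chi$,
\[
\chi(B_fB_f^{(-1)}) = |G| = \chi(|G|\cdot 1_G).
\]
Viewing both elements in $\mathbb{C}[G]$, which contains $\mathbb{Z}[\zeta_m][G]$, Theorem~\ref{Fourier Inversion Formula} gives $B_fB_f^{(-1)} = |G|\cdot 1_G$. This is an equality in $\mathbb{C}[G]$, and since both sides lie in $\mathbb{Z}[\zeta_m][G]$, it holds there as well.

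There is no real obstacle in this argument. The only points needing care are that conjugation commutes with evaluating characters, which relies on character values lying on the unit circle, and that an equality proved in $\mathbb{C}[G]$ transfers to the subring $\mathbb{Z}[\zeta_m][G]$.
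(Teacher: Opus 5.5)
Your proof is correct and follows essentially the same route as the paper: both use $\chi(B_f^{(-1)})=\overline{\chi(B_f)}$ to get $\chi(B_fB_f^{(-1)})=|\chi(B_f)|^2$, and then conclude via the uniqueness part of the Fourier inversion formula. You are somewhat more explicit than the paper in separating the two directions and in noting that an equality proved in $\mathbb{C}[G]$ also holds in $\mathbb{Z}[\zeta_m][G]$.
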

\begin{proof}
   Note that $B_f^{(-1)} = \sum \limits_{x \in G}\overline{\zeta_m^{f(x)}}x^{-1}$ and hence $\chi_v(B_f^{(-1)})=\overline{\chi_v(B_f)}$. Therefore,
   \begin{align*}
       |\chi_v(B_f)|^{2}= \chi_v(B_f)\overline{\chi_v(B_f)}= \chi_v(B_fB_f^{(-1)}).
   \end{align*}
   From Equation (\ref{chi_v and fourier transformation}), it follows that $f$ is GBF if and only if $\chi_v(B_fB_f^{-1}) = |G|$ holds for all $v \in G$. The result now follows from the Fourier inversion formula (Theorem~\ref{Fourier Inversion Formula}).
\end{proof}
\begin{definition}
     Let $f : G \rightarrow C_m$ be a function. Define an element $D_f$ of the group ring $\mathbb{Z}[C_m][G]$ by
     \begin{align*}
         D_f := \sum_{x\in G}g^{f(x)}x.
     \end{align*}
\end{definition}
\noindent Note that
\begin{align}\label{D_fD_f-1 =_sum_of E_x}
    D_fD_f^{(-1)}= \sum_{x\in G}\sum_{y \in G}g^{f(x+y)-f(y)}x =\sum_{x\in G}E_xx,
\end{align}
where
\[
E_x = \sum_{y \in G} g^{f(x+y)-f(y)} \in \mathbb{Z}[C_m].
\]
The element $E_x$ is referred to as the autocorrelation function of $f$ at $x \in G$.

\begin{lema} \label{E_x_is_a_vanishing_sum}
    Let $f: G \rightarrow \mathbb{Z}_m$ be a GBF. Then for each $\tau \in \widehat{C_m}$ with $\rm{ord}(\tau)=m$, we have $\tau(E_x)=0$ for all $x\in G\setminus {1_G}$.
\end{lema}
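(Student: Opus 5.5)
The plan is to apply a faithful character of $C_m$ to the group ring identity of Proposition~\ref{equiv_def_of_gbf} and then read off coefficients. Here $f$ is regarded as $f:G\to\mathbb{Z}_m$, and $D_f=\sum_{x\in G} g^{f(x)}x\in\mathbb{Z}[C_m][G]$, with $g$ a fixed generator of $C_m$.

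First, let $\tau\in\widehat{C_m}$ with $\operatorname{ord}(\tau)=m$. Then $\tau(g)$ is a primitive $m$-th root of unity, say $\tau(g)=\zeta_m^{a}$ with $\gcd(a,m)=1$. Extend $\tau$ coefficientwise to a ring homomorphism $\tilde\tau:\mathbb{Z}[C_m][G]\to\mathbb{Z}[\zeta_m][G]$ by
\[
\tilde\tau\Big(\sum_{x} c_x x\Big)=\sum_x \tau(c_x)\,x .
\]
This is a ring homomorphism because $\tau$ is multiplicative on $C_m$ and hence a ring homomorphism $\mathbb{Z}[C_m]\to\mathbb{Z}[\zeta_m]$. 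Consequently $\tilde\tau(D_f)=\sum_x \zeta_m^{af(x)}x=B_{af}$. Moreover, since $\tau(g^{-1})=\overline{\tau(g)}$, we get $\tilde\tau(D_f^{(-1)})=B_{af}^{(-1)}$.

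Second, we check that $af$ is again a GBF. The map $\sigma_a:\zeta_m\mapsto\zeta_m^a$ is a Galois automorphism of $\mathbb{Q}(\zeta_m)/\mathbb{Q}$. Apply it to $B_fB_f^{(-1)}=|G|$ coefficientwise; this is legitimate because $\sigma_a$ commutes with complex conjugation, $\mathbb{Q}(\zeta_m)$ being abelian. The identity becomes $B_{af}B_{af}^{(-1)}=|G|$. (Equivalently, one could simply note that $\sigma_a$ fixes the rational integer $|G|$.)

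Third, combine the two steps with \eqref{D_fD_f-1 =_sum_of E_x}:
\[
\sum_{x\in G}\tau(E_x)\,x=\tilde\tau\big(D_fD_f^{(-1)}\big)=B_{af}B_{af}^{(-1)}=|G|\cdot 1_G .
\]
Comparing coefficients of $x\neq 1_G$ gives $\tau(E_x)=0$.

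The only point requiring care is the second step: the Galois action must be compatible with the $(-1)$ operation and must preserve the integer $|G|$. Both facts are routine, so I expect no real obstacle. Alternatively, the second step can be bypassed entirely by noting that $\tau$ is just $\sigma_a\circ\tau_1$, where $\tau_1(g)=\zeta_m$, and then applying $\sigma_a$ to the coefficient identity for $\tau_1$.
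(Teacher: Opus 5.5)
Your proposal is correct and follows the paper's argument: apply $\tau$ coefficientwise to $D_fD_f^{(-1)}=\sum_{x\in G}E_x x$, identify the image as a product of the form $B B^{(-1)}$, invoke Proposition~\ref{equiv_def_of_gbf}, and compare coefficients. The paper simply writes $\tau(D_f)=B_f$, tacitly taking $\tau(g)=\zeta_m$. Your Galois step via $\sigma_a$ (showing $af$ is again a GBF, or equivalently applying $\sigma_a$ to the identity for $\tau_1$) is therefore a useful justification covering the other characters of order $m$, not a different route.
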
    
\begin{proof}
    Observe that $\tau(g)$ is a primitive $m^{th} $ root of unity and $\tau(D_f)=B_f$. We have 
    \begin{align*}
        \tau(D_fD_f^{(-1)})= \tau(D_f)\tau(D_f^{(-1)}) = \sum_{x\in G}\tau(E_x)x.
    \end{align*}
    Thus,
    \begin{align*}
        \sum_{x\in G}\tau(E_x)x= B_fB_f^{(-1)}. 
    \end{align*}
Since $f$ is GBF, Proposition \ref{equiv_def_of_gbf} implies that, $\sum_{x\in G}\tau(E_x)x =|G|$. As $\tau(E_{1_G})=|G|$, comparing coefficients in the group ring, we obtain $\tau(E_{1_G}) = |G|$ and $\tau(E_x) = 0$ 
for all $x \neq 1_G$.
\end{proof}
%%%%%%%%%%%%%%%%%%%%%%%%%%%%%%%%%%%%%%%%%%%%%%%%%%%%%%%%%%%%%%%%%%%%%%%%%%
%%%%%%%%%%%%%%%%%%%%%%%%%%%%%%%%%%%%%%%%%%%%%%%%%%%%%%%%%%%%%%%%%%%%%%%%%%%%%%%%%%%%%%%%%%%%%%%%%%%%%%
\subsection{Vanishing sum in $\mathbb{N}[C_m]$}
We now recall the terminology for vanishing sums of roots of unity that will be applied to the autocorrelation elements $E_x$.
Let $S~=~\sum_{i\in I}a_i\eta_i \in \mathbb{N}[\mu_m]$, where $\mu_m$ denotes the group of all $m^{\text{th}}$ roots of unity, $\eta_i$ are roots of unity, and all $a_i$ are the nonzero positive integers. Then we define the following:
\begin{enumerate}[label=(\roman*)]
    \item The \textbf{exponent} of $S$ is the smallest positive integer $e$ such that $\eta_i^e = 1$ for all $i \in I$.  
    \item The \textbf{reduced exponent} of $S$ is the smallest positive integer $r$ such that there exists some $j\in I$ with $(\eta_i \eta_j^{-1})^r = 1$ for all $i \in I$.  
     \item The \textbf{length} $l$ of $S$ is the number of roots of unity appearing in the expression for $S$, counted with multiplicity.
     \item If $S = 0$, then $S$ is called a \textbf{vanishing sum of roots of unity}.  
    \item The relation 
    $
    S = \sum_{i \in I} a_i \eta_i = 0
    $
    is said to be \textbf{minimal} if no proper subsum of the $\eta_i$ vanishes, where $i \in I$.  
\end{enumerate}

\begin{definition}\cite{leung2023nonexistence}
     Suppose that $X =\sum_{i=1}^ma_ig^{i} \in \mathbb{N}[C_m]$ . We say that $X$ is $v$-sum in $\mathbb{N}[C_m]$ if $\sum_{i=1}^ma_i\zeta_m^{i} $ is a vanishing sum of roots of unity. Furthermore, $X$ is called a minimal $v$-sum if $\sum_{i=1}^{m}b_ig^i \neq 0$ whenever $0 \leq b_i \leq a_i$ for all $i$ and $b_j < a_j$ for some $j$.
\end{definition}
\noindent Note that if $X\in \mathbb{N}[C_m]$ is a $v$-sum and $\tau$ is a character of $C_m$ of order $m$ then $\tau(X)=0$. We now define the reduced exponent for the $X$.
\begin{definition}
    Let $X=\sum_{i=1}^{m}a_ig^{i} \in \mathbb{N}[C_m]$ be a minimal $v$-sum. Then the reduced exponent of $X$ is defined as the reduced exponent of $\sum_{i=1}^ma_i\zeta_m^i$.
\end{definition}
\noindent The following lemma appears as Lemma 3 in \cite{leung2020new}. For the sake of completeness, we add the proof here.
\begin{lema}
Suppose $X \in \mathbb{N}[C_m]$ is a $v$-sum. Then $X$ can be written as $X = \sum X_i$,
where each $X_i$ is a minimal $v$-sum in $\mathbb{N}[C_m]$.
\end{lema}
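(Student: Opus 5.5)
We argue by strong induction on the size $\|X\|=\sum_i a_i$ of $X=\sum_{i=1}^m a_i g^i\in\mathbb{N}[C_m]$. The key observation is that subtracting one $v$-sum from another, when the result still has nonnegative coefficients, gives a $v$-sum, because the map $\sum a_i g^i\mapsto \sum a_i\zeta_m^i$ is additive. The other fact we use is that a nonzero $v$-sum has at least two terms: a single root of unity with a positive coefficient never sums to zero. We also interpret the empty sum as the decomposition of $X=0$.

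For the base case, $X=0$ is the empty sum of minimal $v$-sums. For the inductive step, let $X\neq 0$ be a $v$-sum and suppose the statement holds for every $v$-sum $Y\in\mathbb{N}[C_m]$ with $\|Y\|<\|X\|$. Consider the finite, nonempty set
\[
\mathcal{S}=\{Y\in\mathbb{N}[C_m] : Y\neq 0,\ Y\leq X,\ Y \text{ is a $v$-sum}\},
\]
which contains $X$ itself. We choose $X_1\in\mathcal{S}$ with $\|X_1\|$ minimal.

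We first check that $X_1$ is a minimal $v$-sum. Suppose $Z=\sum b_i g^i$ satisfies $0\le b_i\le$ (coefficients of $X_1$) with strict inequality somewhere. If $Z$ were a $v$-sum, then either $Z=0$, or $Z\in\mathcal{S}$ with $\|Z\|<\|X_1\|$. The latter contradicts the choice of $X_1$. We read the definition's ``$\neq 0$'' as saying that $\sum b_i\zeta_m^i\neq 0$ for every nonzero proper sub-element; the case $Z=0$ is excluded because $Z=0$ is trivially a vanishing sum, not a proper nontrivial one. So $X_1$ has no nonzero proper sub-$v$-sum, which is exactly minimality.

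Next, $X-X_1\in\mathbb{N}[C_m]$ since $X_1\le X$, and $\sum(a_i-c_i)\zeta_m^i=0-0=0$, where the $c_i$ are the coefficients of $X_1$. Hence $X-X_1$ is a $v$-sum with $\|X-X_1\|<\|X\|$, since $X_1\neq 0$. By induction $X-X_1=\sum_{i\ge2}X_i$ with each $X_i$ a minimal $v$-sum, and therefore $X=X_1+\sum_{i\ge 2}X_i$.

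The only delicate point is the interpretation of the minimality definition, namely that it concerns nonzero proper subsums. Once that is fixed, the argument is routine.
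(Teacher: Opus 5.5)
Your proof is correct and follows the same route as the paper. Both arguments peel off a nonzero minimal $v$-subsum $X_1$, observe that $X-X_1$ is again a $v$-sum of strictly smaller length, and iterate; your strong induction on $\lVert X\rVert$ is the paper's ``length strictly decreases'' termination argument, and your choice of $X_1$ as a nonzero sub-$v$-sum of minimal $\lVert\cdot\rVert$ also justifies the existence of a minimal $v$-subsum, which the paper simply asserts.
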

\begin{proof}
If $X$ is a minimal $v$-sum, then there is nothing to prove. 
Otherwise, there exists a nonzero proper minimal $v$-subsum $X_1$ of $X$. Then $X-X_1$ is again a $v$-sum in $\mathbb{N}[C_m]$. 
If $X-X_1$ is minimal, then we are done. Otherwise, we may choose a nonzero proper minimal $v$-subsum $X_2$ of $X-X_1$. Continuing in this way, we obtain
\[
X=X_1+X_2+\cdots+X_r,
\]
where each $X_i$ is a minimal $v$-sum. The process terminates after finitely many steps because the length strictly decreases at each step.
\end{proof}

\noindent Note that such a decomposition does not need to be unique.
\begin{ex} \label{ex_of_dist_decomp_of_vsum} Let $g$ be a generator of $C_6$. It is easy to check that $X= \sum_{i=1}^6g^i$ is a  $v$-sum in $\mathbb{N}[C_6]$. Now, we have the following decompositions of $X$ as minimal $v$-sums.
\begin{align*}
    X & = \sum_{i=1}^6 g^i=(1+g^3) + g(1+g^3)+g^2(1+g^3),\\
    X &= \sum_{i=1}^6 g^i = (1+g^2+g^4) + g(1+g^2+g^4).
\end{align*}

\end{ex}
\noindent We have an extension of the reduced exponent to Vanishing $v$-sum in $\mathbb{N}[C_m]$.
Let $X$ be a vanishing $v$-sum and $\sum_{i\in I}X_i$ be a decomposition of $X$ into minimal $v$-sum with $k_i$ being the reduced exponent of $X_i$. Then $ \rm{lcm}\{k_i ~|~i\in I\}$ is the reduced exponent of $X$ with the decomposition $\sum_{i\in I} X_i$. In Example \ref{ex_of_dist_decomp_of_vsum}, the reduced exponent of $X$ with the decomposition $(1+g^3) + g(1+g^3)+g^2(1+g^3)$ is $2$ and it is $3$ with the decomposition $(1+g^2+g^4) + g(1+g^2+g^4)$.
\begin{definition}
    The $c$-exponent of a $v$-sum $X$ is the smallest positive integer among all reduced exponents of $X$ obtained from the distinct decomposition of $X$ into minimal $v$-sums.
\end{definition}
\noindent If $k$ is the reduced exponent of a minimal $v$-sum $X$ in $\mathbb{N}[C_m]$ then $k$ divides $m$. Moreover, the $c$-exponent of a $v$-sum $X$ in $\mathbb{N}[C_m]$ also divides $m$.
\begin{lema}\label{C_expo_is_square_free}
    Let $X \in \mathbb{N}[C_m]$ be a $v$-sum with $c$- exponent $k$. Write $m=\prod_{i=1}^tp_i^{\alpha_i}$, where $p_i$ are distinct primes. Then $k$ is square free. Moreover, $k=\prod_{i\in I} p_i$ for some $I\subseteq \{1,\ldots,t\}$.
\end{lema}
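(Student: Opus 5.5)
We must show that the $c$-exponent $k$ of a $v$-sum $X$ is squarefree. The key input is the classical theorem of Lam–Leung (equivalently, de Bruijn/Schoenberg/Rédei): every vanishing sum of $m$-th roots of unity with nonnegative integer coefficients is an $\mathbb{N}$-combination of rotated ``prime cycles'' $\eta(1+\zeta_p+\cdots+\zeta_p^{p-1})$ with $p\mid m$ prime. In group ring language, for each prime $p_i\mid m$ let $P_i=\sum_{j=0}^{p_i-1} g^{jm/p_i}\in\mathbb{N}[C_m]$. Then $X=\sum_{i=1}^t Y_iP_i$ with $Y_i\in\mathbb{N}[C_m]$. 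We take this as the structural input (it is exactly the known result on vanishing sums the paper recalls in this section).

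Given this, I would write $X$ as a sum of translates $g^{a}P_i$. Each translate $g^aP_i$ is itself a minimal $v$-sum. To see this, note that $\zeta_m^{a}(1+\zeta_{p_i}+\cdots+\zeta_{p_i}^{p_i-1})$ has minimal polynomial degree $p_i-1$ over $\mathbb{Q}$ for $\zeta_{p_i}$, so no proper nonempty subsum of $1,\zeta_{p_i},\dots,\zeta_{p_i}^{p_i-1}$ vanishes. The reduced exponent of $g^aP_i$ is exactly $p_i$: the ratios of its terms are the $p_i$-th roots of unity, which generate a group of order $p_i$. Hence this decomposition of $X$ into minimal $v$-sums has reduced exponent $\operatorname{lcm}\{p_i : Y_i\neq 0\}=\prod_{i\in I}p_i$, which is squarefree. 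Consequently the $c$-exponent $k$, being the minimum over all decompositions, satisfies $k\le\prod_{i\in I}p_i$.

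To obtain the exact form, I need more than the inequality: every reduced exponent arising from any decomposition must divide $m$, and the minimum must itself be squarefree. So I would argue at the level of an arbitrary decomposition $X=\sum X_j$ into minimal $v$-sums with reduced exponents $k_j$, and show that any such decomposition can be refined or replaced by one with reduced exponent $\operatorname{rad}(\operatorname{lcm} k_j)$ or smaller. The plan is as follows.
\begin{enumerate}
\item Apply the Lam–Leung decomposition to each minimal $X_j$ individually. Although $X_j$ is minimal, it can still be written in $\mathbb{Z}$-combinations of the $P_i$. In fact, the Lam–Leung theorem gives an $\mathbb{N}$-combination for every $v$-sum.
\item A minimal $X_j$ written as an $\mathbb{N}$-combination of translates $g^aP_i$ must equal a single translate, since each translate is itself a $v$-subsum.
\item Hence every minimal $v$-sum is some $g^aP_i$, and every reduced exponent is a prime $p_i$.
\item The reduced exponent of any decomposition is then an lcm of distinct primes dividing $m$, namely $\prod_{i\in I}p_i$ for the set $I$ of primes occurring. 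In particular the minimum $k$ has this form.
\end{enumerate}

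The main obstacle is step 2, which relies on the nonnegative decomposition theorem. That theorem holds for all $m$ only in the positive-coefficient form due to Lam–Leung when $m$ has at most two prime factors. For general $m$ there exist minimal vanishing sums that are not single prime cycles, for example of length $6$ at $m=30$, built as $(\zeta_5+\cdots+\zeta_5^4)\cdot(-\zeta_3)$-type combinations. So step 2 is false in general.

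The repair I would try is this. Write each minimal $X_j$ with reduced exponent $k_j$. Rotating it, its terms lie in $\mu_{k_j}$, and $X_j$ is a vanishing sum in $\mathbb{N}[C_{k_j}]$. If $p^2\mid k_j$, then by the standard fibration argument (a vanishing sum over $\mu_{p^2 s}$ splits by cosets of $\mu_{ps}$, each coset vanishing separately) $X_j$ would decompose further or have smaller reduced exponent, contradicting minimality. Hence each $k_j$ is squarefree and divides $m$. An lcm of squarefree divisors of $m$ is a product of distinct $p_i$, which gives the claim for every decomposition, and in particular for the minimum $k$.
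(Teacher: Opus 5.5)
Your first argument should be deleted outright, not merely patched at step 2. This covers the paragraph invoking an $\mathbb{N}$-decomposition into rotated prime cycles, and steps 1--4.

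\begin{itemize}
\item The claim that every $v$-sum in $\mathbb{N}[C_m]$ equals $\sum_i Y_iP_i$ with $Y_i\in\mathbb{N}[C_m]$ is false once $m$ has three distinct prime divisors. For example, $\zeta_5+\zeta_5^2+\zeta_5^3+\zeta_5^4+(-\zeta_3)+(-\zeta_3^2)=0$ is a minimal vanishing sum of $30$-th roots of unity of length $6$ with reduced exponent $30$. Note that it is a sum, not the product you wrote.
\item For every $m$, only the $\mathbb{Z}$-combination version (R\'edei--de Bruijn--Schoenberg) holds. The $\mathbb{N}$-version needs $m$ to have at most two prime divisors.
\item The paper does not recall such a result. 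Proposition~\ref{Discription_of_v-sum}(ii)--(iv) give decompositions into the $D_p$ only under hypotheses on the $c$-exponent that already assume it is a product of distinct primes.
\end{itemize}

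So the bound $k\le\prod_{i\in I}p_i$ from your first part is unsupported, though it is also unnecessary.

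Your repair is correct and is the whole proof. It has the same shape as the paper's argument. The paper cites Conway--Jones (Theorem~5) for the fact that the reduced exponent of a minimal $v$-sum is squarefree. It then notes that the reduced exponent of any decomposition is an lcm of squarefree divisors of $m$, hence a product of distinct $p_i$, so the minimum $k$ is too. You instead prove the cited fact. That makes the lemma self-contained at the cost of a few lines, while the paper buys brevity by citation.

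To make your fibration step airtight, phrase it as follows.
\begin{itemize}
\item Rotate $X_j$ by the inverse of one of its terms. Then $1$ occurs and all terms lie in $\mu_N$, where $N=k_j$ is minimal with this property. Also $N\mid m$, since the ratios of terms generate a subgroup of $\mu_m$.
\item Suppose $p^2\mid N$. Then $[\mathbb{Q}(\zeta_N):\mathbb{Q}(\zeta_{N/p})]=\varphi(N)/\varphi(N/p)=p$. Hence $1,\zeta_N,\dots,\zeta_N^{p-1}$ are linearly independent over $\mathbb{Q}(\zeta_{N/p})$, and the subsums over the $p$ cosets of $\mu_{N/p}$ in $\mu_N$ vanish separately.
\item Minimality of $X_j$ leaves only one nonempty coset, and it must be $\mu_{N/p}$ because it contains $1$.
\item Then every ratio of terms lies in $\mu_{N/p}$, contradicting the minimality of $N=k_j$.
\end{itemize}
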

\begin{proof}
    The reduced exponent of a minimal $v$-sum is square free, as stated in \cite[Theorem 5]{conway1976trigonometric}. Therefore, the proof follows directly from the definition of $c$-exponent.
\end{proof}
\noindent For a divisor $t$ of $m$, let $D_t$ denote the unique cyclic subgroup of $C_m$ of order $t$. We identify $D_t$ with the group ring element $\sum_{d\in D_t}d$.
\begin{prop} \label{Discription_of_v-sum}
    Let $X \in \mathbb{N}[C_m]$ be a $v$-sum with the $c$-exponent $k$. Then the following holds;
    \begin{enumerate}[label=(\roman*)]
        \item \cite[Lemma 4]{leung2020new} $\lVert X \rVert \geq 2+ \sum_{p|k}(p-2)$.
        \item  \cite[ Proposition $18(a)$]{leung2023nonexistence} If $k=p$, then $X=YD_p$, where $Y\in \mathbb{N}[C_m]$. 
        \item \cite [Proposition $18(b)$]{leung2023nonexistence} If $k=p_1p_2$, then $X=Y_1D_{p_1}+Y_2D_{p_2}$ with $Y_1,Y_2 \in \mathbb{N}[C_m]$.
        \item \cite[Theorem 4.8]{lam2000vanishing} If $k=\prod_{i=1}^sp_i$ with $s \geq 3$ and $p_1 < p_2 < \cdots < p_s$. Assume that $|\rm{supp}(X)| \leq (p_1-1)(p_2-1)+p_3-1$ then $X=\sum_{i=1}^s Y_iD_{p_i}$ with $Y_i \in \mathbb{N}[C_m]$.
        \item\cite[Theorem 3.3]{lam2000vanishing} If $X$ is a minimal $v$-sum and $k=p$ then $X=D_pY$ with $Y\in C_m$.
        \item \cite[Theorem 4.8]{lam2000vanishing} If $X$ is a minimal $v$-sum and $k=\prod_{i=1}^sp_i$ with $s\geq 2$ and $p_1 < p_2 < \cdots < p_s$, then $s\geq 3$ and $ \lVert X \rVert \geq (p_1-1)(p_2-1)+(p_3-1)$.
        
    \end{enumerate}
\end{prop}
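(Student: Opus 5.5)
The proposition collects results from the literature, so I would argue through the cited statements. The common device is to fix a decomposition $X=\sum_{i\in I}X_i$ into minimal $v$-sums, which exists by the preceding lemma, chosen so that its reduced exponent equals the $c$-exponent $k$. If $k_i$ is the reduced exponent of $X_i$, then $\operatorname{lcm}\{k_i\}=k$. Each $k_i$ divides $k$ and is squarefree by Lemma~\ref{C_expo_is_square_free}. Each $k_i>1$, since a nonzero sum whose terms are all equal cannot vanish. Items (v) and (vi) are exactly the Lam--Leung structure theorems \cite[Theorems 3.3, 4.8]{lam2000vanishing} for minimal relations, so I take them as given and derive the rest from them.

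\textbf{Item (ii).} If $k=p$, every $k_i$ is a divisor of $p$ greater than $1$, so $k_i=p$. By (v), each $X_i=D_p y_i$ with $y_i\in C_m$. Summing gives $X=D_pY$ with $Y=\sum_i y_i\in\mathbb{N}[C_m]$.

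\textbf{Item (iii).} If $k=p_1p_2$, each $k_i\in\{p_1,p_2,p_1p_2\}$. The value $p_1p_2$ is excluded by (vi), because a minimal $v$-sum whose reduced exponent has at least two prime factors must have at least three. So every $X_i$ is a translate of $D_{p_1}$ or of $D_{p_2}$. Grouping the pieces accordingly gives $X=Y_1D_{p_1}+Y_2D_{p_2}$.

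\textbf{Item (i).} For a single minimal piece I would use the Lam--Leung/Conway--Jones bound $\|X_i\|\ge 2+\sum_{p\mid k_i}(p-2)$. For one prime this is just $\|D_p\|=p$, and for three or more primes it follows from (vi). Every prime dividing $k$ divides some $k_i$, and each piece contributes at least $2$. Summing over pieces therefore gives
\[
\|X\|=\sum_i\|X_i\|\ \ge\ 2+\sum_{p\mid k}(p-2).
\]

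\textbf{Item (iv).} Suppose $k=p_1\cdots p_s$ with $s\ge 3$ and $|\operatorname{supp}(X)|\le (p_1-1)(p_2-1)+p_3-1$. The aim is to show no piece $X_i$ has $k_i$ with three or more prime factors. Given that, every piece is a translate of some $D_{p_j}$, and grouping yields $X=\sum_j Y_jD_{p_j}$. If a piece had $k_i=q_1q_2q_3\cdots$ with $q_1<q_2<q_3<\cdots$ all among the $p_j$, then (vi) gives $\|X_i\|\ge (q_1-1)(q_2-1)+q_3-1$, and this is at least $(p_1-1)(p_2-1)+p_3-1$ by monotonicity. This is where I expect the main obstacle: the hypothesis bounds the \emph{support} of $X$, whereas (vi) bounds the \emph{norm}, counted with multiplicity. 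The bridge I would try first is to use the support form of \cite[Theorem 4.8]{lam2000vanishing}. A minimal relation has no repeated root, since otherwise $\zeta_m^{j}$ could be removed from both occurrences, so $\|X_i\|=|\operatorname{supp}(X_i)|\le|\operatorname{supp}(X)|$. The remaining gap is between a non-strict and a strict inequality at the boundary value. I would resolve it by quoting the precise statement of \cite[Theorem 4.8]{lam2000vanishing}, which is phrased to cover exactly this bound, rather than reproving it.
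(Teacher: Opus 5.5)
The paper proves nothing here; it only cites the four sources. So deriving (i)--(iv) from (v) and (vi) is a legitimate alternative route. Your derivations of (ii) and (iii) are correct, as is the summation step in (i).

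\textbf{Item (i).} The per-piece bound does not ``follow from (vi)'' once $k_i$ has four or more prime factors. That is exactly the case in which the paper invokes (i), and (vi) only involves the three smallest primes. For $k_i=5\cdot 7\cdot 11\cdot 101$, (vi) gives $\|X_i\|\ge 34$, whereas (i) needs $\|X_i\|\ge 2+3+5+9+99=118$. You must take the per-piece bound from Theorem 5 of Conway--Jones instead. An $\mathbb{N}$-minimal $X_i$ has no vanishing subsum over a proper subset of its distinct roots, so that theorem applies and gives $\|X_i\|\ge|\operatorname{supp}(X_i)|\ge 2+\sum_{p\mid k_i}(p-2)$.

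\textbf{Item (iv).} This is the genuine gap. The boundary case you hope to settle by quoting Lam--Leung precisely cannot be settled, because with ``$\le$'' the statement is false. In $\mathbb{N}[C_{30}]$ take $X=g^{5}+g^{25}+g^{6}+g^{12}+g^{18}+g^{24}$, that is, $\zeta_6+\zeta_6^{5}+\zeta_5+\zeta_5^{2}+\zeta_5^{3}+\zeta_5^{4}=0$.

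Write any subsum as $\sum_{j=0}^{4}c_j\zeta_5^{j}$, where $c_0\in\{0,\zeta_6,\zeta_6^{5},1\}\subset\mathbb{Q}(\zeta_3)$ and $c_j\in\{0,1\}$ for $j\ge 1$. Such a subsum vanishes only if all $c_j$ are equal, so only the empty and the full subsum vanish. Hence $X$ is a minimal $v$-sum. Its $c$-exponent is therefore its reduced exponent $30=2\cdot 3\cdot 5$, and $|\operatorname{supp}(X)|=6=(2-1)(3-1)+(5-1)$.

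Yet $X$ is not of the form $\sum_i Y_iD_{p_i}$. Every translate $yD_p$ occurring in such an expression is a vanishing subsum, so minimality would force $X=yD_p$ and hence $\|X\|=p\le 5$. So (iv) needs a strict inequality.

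With the hypothesis $\|X\|<(p_1-1)(p_2-1)+p_3-1$, your argument closes at once: a piece with at least three primes would give $\|X\|\ge\|X_i\|\ge(p_1-1)(p_2-1)+p_3-1$. This strict norm version is all the paper needs in Theorem~\ref{possible_form_of_e_x_n_3}, where $\|E_x\|=27<34$.

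With a hypothesis on the support, your bridge also fails. Deleting copies of a repeated root does not produce a vanishing proper subsum, so your argument for ``no repeated roots'' does not work. Moreover, minimal vanishing sums with coefficients larger than $1$ exist, to my recollection by Steinberger's constructions. Thus $\|X_i\|$ can exceed $|\operatorname{supp}(X_i)|$, and (vi), which bounds only $\|X_i\|$, says nothing about the support.
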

%%%%%%%%%%%%%%%%%%%%%%%%%%%%%%%%%%%%%%%%%%%%%%%%%%%%%%%%%%%%%%%%%%%%%%%%%%
%%%%%%%%%%%%%%%%%%%%%%%%%%%%%%%%%%%%%%%%%%%%%%%%%%%%%%%%%%%%%%%%%%%%%%%%%%%%%%%%%%%%%%%%%%%%%%%%%%%%%%%%%%%%%%%%%%%%%%%%%%%%%%%%%%%%%%%%%%%%%%%%%%%%%%%%%%%%%%%%%%%%%%%%%%%%%%%%%%%%%%%%%%%%%%%%%%%%%%%%%%%%%%%%%%%%%%%%%%%%%%%%%%%%%%%%%%%%%%%%%%%%%%%%%%%%%%%%%%%%%%%%%%%%%%%%%%%%%%%%%%%

%\begin{theorem}\cite{kumar1985generalized}
    %Let $q$ and $n$ be positive integers. If $n$ is even  or $q\not\equiv 2 \mod 4$, then there exist a GBF: $\mathbb{Z}_q^n \rightarrow \mathbb{Z}_q$. 
%\end{theorem

%%%%%%%%%%%%%%%%%%%%%%%%%%%%%%%%%%%%%%%%%%%%%%%%%%%%%%%%%%%%%%%%%%%%%%%%%%
\subsection{Existence results for GBFs}

We conclude the preliminaries with two auxiliary existence results and explain why the main focus of the paper is the case in which $m$ is not divisible by 3.

\begin{prop} \label{existence_of_the_gbf_to_lower_order}
Let $f : G \to \mathbb{Z}_m$ be a GBF, and write $m = \prod_{i=1}^t p_i^{\alpha_i}$, where the $p_i$ are distinct primes. Let $k_x$ denote the $c$-exponent of $E_x$ for each $x \in G \setminus \{1_G\}$. Suppose $p$ is a prime such that $p \nmid k_x$ for all $x \in G \setminus \{1_G\}$. Then there exists a GBF $ g: G \to \mathbb{Z}_{m/p}$.
\end{prop}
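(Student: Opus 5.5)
The plan is to take the obvious candidate $g(x) := f(x) \bmod m/p$ and check the autocorrelation condition for $g$ directly. The idea is that every minimal $v$-sum in a decomposition of $E_x$ lives in a coset of a cyclic subgroup of order prime to $p$, so it survives reduction modulo $m/p$.

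\textbf{Setup.} Let $g$ be a generator of $C_m$ and $g'$ a generator of $C_{m/p}$. Let $\rho : C_m \to C_{m/p}$ be the surjective homomorphism $g^a \mapsto g'^a$, extended linearly to $\rho:\mathbb{Z}[C_m]\to\mathbb{Z}[C_{m/p}]$. Its kernel has order $p$.
\begin{itemize}
\item The autocorrelation of $g$ at $x$ is
\[
E'_x=\sum_{y\in G} g'^{\,g(x+y)-g(y)}=\rho(E_x).
\]
\item Running the computation of Lemma~\ref{E_x_is_a_vanishing_sum} backwards, for any character $\tau'$ of $C_{m/p}$ of order $m/p$ we get $\tau'(D_gD_g^{(-1)})=B_gB_g^{(-1)}=\sum_{x}\tau'(E'_x)\,x$.
\item Since $\tau'(E'_{1_G})=|G|$, Proposition~\ref{equiv_def_of_gbf} shows it suffices to prove $\tau'(\rho(E_x))=0$ for all $x\neq 1_G$.
\end{itemize}

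\textbf{Reducing to a single minimal $v$-sum.} Fix $x\neq 1_G$. Choose a decomposition $E_x=\sum_i X_i$ into minimal $v$-sums realizing the $c$-exponent $k_x$. Each $X_i$ has reduced exponent $r_i$ with $r_i\mid k_x$, so $p\nmid r_i$; also $r_i \mid m$, hence $r_i\mid m/p$. By the definition of reduced exponent there is an element $h_i\in C_m$ such that all group elements in the support of $X_i$ lie in $h_iD_{r_i}$. So we can write $X_i=h_iZ_i$ with $Z_i\in\mathbb{N}[D_{r_i}]$. By linearity it suffices to show $\tau'(\rho(X_i))=0$ for each $i$.

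\textbf{Key step: vanishing after reduction.} Let $\tau$ be a character of $C_m$ of order $m$. Since $X_i$ is a $v$-sum, $0=\tau(X_i)=\tau(h_i)\tau(Z_i)$, so $\tau(Z_i)=0$.
\begin{itemize}
\item Because $\gcd(r_i,p)=1$, the restriction of $\rho$ to $D_{r_i}$ is injective (its kernel has order $p$). So $\rho$ maps $D_{r_i}$ isomorphically onto the subgroup of order $r_i$ in $C_{m/p}$.
\item Both $\tau|_{D_{r_i}}$ and $(\tau'\circ\rho)|_{D_{r_i}}$ are therefore faithful characters of the cyclic group $D_{r_i}$.
\item Two faithful characters of $D_{r_i}$ differ by an automorphism $\sigma\in\mathrm{Gal}(\mathbb{Q}(\zeta_{r_i})/\mathbb{Q})$, since both send a generator to a primitive $r_i$-th root of unity.
\item Hence $\tau'(\rho(Z_i))=\sigma(\tau(Z_i))=0$, and so $\tau'(\rho(X_i))=\tau'(\rho(h_i))\,\tau'(\rho(Z_i))=0$.
\end{itemize}
Summing over $i$ gives $\tau'(E'_x)=0$ for all $x\neq 1_G$, so $g$ is a GBF into $\mathbb{Z}_{m/p}$.

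\textbf{Expected obstacle.} The main point needing care is the coset description $X_i=h_iZ_i$ and the injectivity of $\rho$ on $D_{r_i}$. This is exactly where the hypothesis $p\nmid k_x$ enters, including the case $p^2\mid m$. If instead $p\mid r_i$, the reduction could collapse distinct roots of unity and destroy the vanishing. The remaining steps are routine bookkeeping with Proposition~\ref{equiv_def_of_gbf} and Fourier inversion (Theorem~\ref{Fourier Inversion Formula}).
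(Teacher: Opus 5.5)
Your proof is correct and complete. The paper gives no argument of its own beyond deferring to Proposition~4 of \cite{leung2020new}, and your route is exactly the standard argument being cited: reduce $f$ modulo $m/p$, and use a decomposition realizing $k_x$ so that each minimal $v$-sum lies in a coset $h_iD_{r_i}$ with $p\nmid r_i$. The vanishing then transfers through the projection, which is injective on $D_{r_i}$, via a Galois automorphism.
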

\begin{proof}
    Proof of the following proposition is similar to that of Proposition~4 in \cite{leung2020new}.
\end{proof}
\begin{prop} \label{gbf_in_lm}
Suppose there exists a GBF $f : G \to \mathbb{Z}_m$. Then, for any positive integer $l$, there also exists a GBF $g : G \to \mathbb{Z}_{lm}$.
\end{prop}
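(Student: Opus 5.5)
The natural construction is to compose $f$ with an embedding $\mathbb{Z}_m\hookrightarrow\mathbb{Z}_{lm}$. Define $g:G\to\mathbb{Z}_{lm}$ by
\[
g(x)= l\, f(x) \pmod{lm},
\]
which is well defined because $f(x)$ is determined modulo $m$, so $l f(x)$ is determined modulo $lm$. The key observation is that $\zeta_{lm}^{l}$ is a primitive $m$-th root of unity. We may therefore choose the primitive $m$-th root $\zeta_m$ in the definition of GBF so that $\zeta_m=\zeta_{lm}^{l}$. With this choice, for every $x\in G$,
\[
\zeta_{lm}^{g(x)}=\zeta_{lm}^{l f(x)}=\zeta_m^{f(x)} .
\]

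Consequently, for every $\chi\in\widehat{G}$ we have
\[
\sum_{x\in G}\zeta_{lm}^{g(x)}\chi(x)=\sum_{x\in G}\zeta_m^{f(x)}\chi(x).
\]
The modulus of the right-hand side squared equals $|G|$ because $f$ is a GBF, so $g$ satisfies \eqref{fourier_transformation}. Equivalently, $B_g=B_f$ in $\mathbb{C}[G]$, and Proposition~\ref{equiv_def_of_gbf} gives $B_gB_g^{(-1)}=|G|$.

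The only point needing care is independence from the choice of primitive root. The bent condition for $f$ could be stated for one fixed primitive $m$-th root $\zeta_m$, while $\zeta_{lm}^l$ is some possibly different primitive $m$-th root. This is harmless. Any two primitive $m$-th roots are related by a Galois automorphism $\sigma\in\mathrm{Gal}(\mathbb{Q}(\zeta_{m'})/\mathbb{Q})$, where $m'=\operatorname{lcm}(m,|G|)$, and we may take $\sigma$ to fix $\mathbb{Q}(\zeta_{|G|})$ up to a permutation of characters. Applying $\sigma$ to the identity $B_fB_f^{(-1)}=|G|$ coefficientwise preserves it, since $\sigma$ commutes with complex conjugation on abelian cyclotomic fields and fixes the integer $|G|$.

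Alternatively, one can simply note that $g$ has the property $\tau(E^{(g)}_x)=0$ for all characters $\tau$ of order $lm$, because $E^{(g)}_x$ is the image of $E^{(f)}_x$ under $C_m\to C_{lm}$, $h\mapsto h^l$. This is exactly the formulation of Lemma~\ref{E_x_is_a_vanishing_sum}. I expect no genuine obstacle beyond this bookkeeping.
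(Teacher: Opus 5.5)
Your proof is correct and is the same as the paper's: the paper also defines $g(x)=l\,f(x)$ and simply says the bent condition is straightforward to check, which is exactly your observation that $\zeta_{lm}^{l}$ is a primitive $m$-th root of unity, so $B_g=B_f$. Your extra remark on independence from the choice of primitive root covers a point the paper leaves implicit. It works most cleanly in the group-ring form you give, which needs only $\mathrm{Gal}(\mathbb{Q}(\zeta_m)/\mathbb{Q})$ and the fact that it commutes with complex conjugation, so the detour through $\mathbb{Q}(\zeta_{\operatorname{lcm}(m,|G|)})$ and characters is unnecessary.
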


\begin{proof}
Define $g : G \to \mathbb{Z}_{lm}$ by $g(x) = l f(x)$. It is straightforward to verify that $g$ is a GBF.
\end{proof}

\noindent
We now specialise to the case $G = \mathbb{Z}_3^n$. In this paper, we study GBFs from $\mathbb{Z}_3^n$ to $\mathbb{Z}_m$.

Since $3 \not\equiv 2 \pmod{4}$, it follows from Theorem~\ref{P.V.Kumar_existence_resut} that there exists a GBF $f : \mathbb{Z}_3^n \to \mathbb{Z}_3$. Combining this with Proposition~\ref{gbf_in_lm}, we obtain the following result.

\begin{theorem}
There exists a GBF $f : \mathbb{Z}_3^n \to \mathbb{Z}_m$ whenever $m$ is a multiple of $3$.
\end{theorem}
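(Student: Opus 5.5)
The plan is to combine the two existence results already available. First, Theorem~\ref{P.V.Kumar_existence_resut} with $q=3$ gives a GBF $h:\mathbb{Z}_3^n\to\mathbb{Z}_3$ for every $n\geq 1$, since $3\not\equiv 2 \pmod 4$. Then Proposition~\ref{gbf_in_lm} with $l=m/3$ lifts $h$ to a GBF $g:\mathbb{Z}_3^n\to\mathbb{Z}_m$, $g(x)=l\,h(x)$. The construction is therefore immediate; the only thing to check is that the scaling really preserves bentness.

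For that check, note that $\zeta_m^{l}$ is a primitive $3$rd root of unity, so we may take $\zeta_m^{l}=\zeta_3$. Then for every $\chi\in\widehat{G}$ we have
\[
\sum_{x\in G}\zeta_m^{g(x)}\chi(x)=\sum_{x\in G}\zeta_3^{h(x)}\chi(x).
\]
The right-hand side has squared modulus $3^n$ by the bentness of $h$, so $g$ satisfies (\ref{fourier_transformation}).

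To be self-contained rather than citing \cite{kumar1985generalized}, I would exhibit $h$ explicitly. For even $n=2k$, take $h(x,y)=x\cdot y$ with $x,y\in\mathbb{Z}_3^k$. For odd $n$, take $h(x)=x_1^2+\cdots+x_n^2$. The quadratic form works because of the Gauss sum identity $\bigl|\sum_{t\in\mathbb{Z}_3}\zeta_3^{t^2+at}\bigr|^2=3$ for every $a\in\mathbb{Z}_3$, and the Walsh transform of $h$ factors over the coordinates into such sums. In fact this quadratic choice handles every $n$ at once.

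There is no real obstacle here. The only step needing care is the identification $\zeta_m^{m/3}=\zeta_3$ (up to choice of primitive root), and since bentness is invariant under Galois conjugation, that choice does not matter.
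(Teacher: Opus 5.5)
Your proof is correct and is the paper's argument: Theorem~\ref{P.V.Kumar_existence_resut} with $q=3$ gives a GBF into $\mathbb{Z}_3$, and Proposition~\ref{gbf_in_lm} with $l=m/3$ lifts it to $\mathbb{Z}_m$. Your additional checks fill in details the paper cites or calls straightforward: that $\zeta_m^{m/3}$ is a primitive cube root of unity (with Galois invariance covering the choice of root), and that $x_1^2+\cdots+x_n^2$ is bent for every $n$ via the Gauss sum $|1+2\zeta_3|^2=3$.
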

The above theorem establishes the existence of GBFs when $m$ is divisible by $3$. It is  natural to ask the existence of  GBFs  when $m$ is not divisible by $3$.

\noindent
In this paper, we establish several nonexistence results for GBFs. In particular, we investigate the case when $m$ is odd and not divisible by $3$, and the case when $m$ is even and not divisible by $3$.
%%%%%%%%%%%%%%%%%%%%%%%%%%%%%%%%%%%%%%%%%%%%%%%%%%%%%%%%%%%%%%%%%%%%%%%%%%%%%%%%%%%%%%%%%%%%%%%%%%%%%%%%%%%%%%%%%%%%%%%%%%%%%%%%%%%%%%%%%%%%%%%%%%%%%%%%%%%%%%%%%%%%%%%%%%%%%%%%%%%%%
\section{Nonexistence results when $m$ is odd and not divisible by $3$}
Write $m=\prod_{j=1}^tp_j^{\alpha_j}$, where the primes satisfy $5 \leq p_1 < p_2 < \cdots < p_t $ and $\alpha_j$ are positive integers.  Let $\mathcal{P}(k)$ denote the set of all prime divisors of $k$. With these notations, we have the following theorem.

\begin{theorem} \label{Nonexistence_of _gbf_for_odd_m}
    Let there exists a GBF $f: \mathbb{Z}_3^n \rightarrow \mathbb{Z}_m$, then $t \geq 2$ and $2p_1+p_2 \leq 3^n$.
\end{theorem}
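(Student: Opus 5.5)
Among all GBFs $\mathbb{Z}_3^n\to\mathbb{Z}_{m'}$ with $m'\mid m$ and $m'>1$, I will take one with the fewest prime factors (counted with multiplicity); note that $f$ itself is one of them, so the set is nonempty. Call this function $f$ again and its modulus $m$, with prime divisors $5\le p_1<\dots<p_t$. Passing to a divisor can only enlarge the primes $p_1,p_2$, so it suffices to prove $t\ge 2$ and $2p_1+p_2\le 3^n$ for this minimal $f$.

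\textbf{Step 1: pass to length constraints.} For each $x\neq 0$ in $G=\mathbb{Z}_3^n$, Lemma~\ref{E_x_is_a_vanishing_sum} says that $E_x=\sum_{y}g^{f(x+y)-f(y)}\in\mathbb{N}[C_m]$ is a $v$-sum. Its length is $\lVert E_x\rVert=3^n$. Let $k_x$ be its $c$-exponent. By Lemma~\ref{C_expo_is_square_free}, $k_x$ is a product of distinct primes among the $p_j$. Moreover $k_x>1$, since $k_x=1$ would force a minimal $v$-sum with reduced exponent $1$, i.e.\ a single root of unity summing to zero, which is impossible.

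\textbf{Step 2: use minimality.} By Proposition~\ref{existence_of_the_gbf_to_lower_order}, if some prime $p\mid m$ divides no $k_x$, then there is a GBF into $\mathbb{Z}_{m/p}$. This contradicts minimality, unless $m=p$ is prime. That case cannot occur: a GBF into $\mathbb{Z}_1$ is a constant function, whose $\lvert\chi(B_f)\rvert^2$ equals $3^{2n}\neq 3^n$ at the principal character. Hence every $p_j$ divides some $k_x$.

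\textbf{Step 3: rule out $t=1$.} If $t=1$, then $k_x=p_1$ for all $x$. Proposition~\ref{Discription_of_v-sum}(ii) gives $E_x=Y_xD_{p_1}$, so $p_1\mid \lVert E_x\rVert=3^n$. This is impossible since $p_1\ge5$. Hence $t\ge2$.

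\textbf{Step 4: obtain $2p_1+p_2\le 3^n$.} Since $p_2$ divides some $k_x$, fix such an $x$. I distinguish cases by the number of primes in $k_x$.
\begin{itemize}
\item If $k_x=p_2$, Proposition~\ref{Discription_of_v-sum}(ii) gives $p_2\mid 3^n$, a contradiction.
\item If $k_x=p_ip_j$ with two primes, part (iii) gives $3^n=ap_i+bp_j$ with $a,b\ge0$. Since neither prime divides $3^n$, both $a,b\ge1$. Moreover $a=b=1$ is impossible, because $p_i+p_j$ is even while $3^n$ is odd. Hence $3^n\ge 2p_i+p_j$ or $3^n\ge p_i+2p_j$. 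One of $p_i,p_j$ is $p_2$ or larger and the other is at least $p_1$, so either bound is $\ge 2p_1+p_2$.
\item If $k_x$ has $s\ge3$ primes $q_1<q_2<q_3<\cdots$, I use part (i): $3^n\ge 2+\sum(q-2)\ge 2+(p_1-2)+(p_2-2)+(p_3-2)$. This exceeds $2p_1+p_2$ because $p_3-p_1-4\ge 0$: the primes are odd and distinct, so $p_3\ge p_1+4$. Care is needed if $p_2\notin k_x$, but then the primes in $k_x$ are $\ge p_1$, at least two of them exceed $p_2$, and the same estimate works.
\end{itemize}

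The main obstacle is this three-or-more-primes case. One must make sure that part (i) genuinely gives a strict enough bound, and that $x$ is chosen so that the inequalities reduce correctly to $2p_1+p_2$. The parity argument in the two-prime case is the other delicate point.
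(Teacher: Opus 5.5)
Your proof is correct, but it is organised differently from the paper's. The paper fixes an arbitrary $x\neq 0$, splits $E_x$ into minimal $v$-sums $X_i$, and works with the reduced exponent $k_i$ of each piece:
\begin{itemize}
\item four or more primes in some $k_i$ are handled by part (i) of Proposition~\ref{Discription_of_v-sum};
\item exactly three primes are handled by the Lam--Leung bound $\lVert X_i\rVert\ge (q_1-1)(q_2-1)+q_3-1$ of part (vi);
\item otherwise every piece is a rotated $D_{q_i}$ by part (v), divisibility rules out a single prime, and parity rules out exactly two pieces.
\end{itemize}
You instead work with the $c$-exponent $k_x$ of $E_x$ as a whole and use the $c$-exponent statements (ii), (iii) and (i):
\begin{itemize}
\item one prime is excluded by divisibility;
\item two primes give $3^n=ap_i+bp_j$ with $a,b\ge 1$, not both equal to $1$ by parity;
\item three or more primes give $3^n\ge p_1+p_2+p_3-4\ge 2p_1+p_2$, since $p_3\ge p_1+4$.
\end{itemize}
Your route is shorter and never needs the quantitative bound in (vi); the structural fact that no minimal sum has a two-prime reduced exponent is absorbed into (iii). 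The paper's piece-by-piece analysis extracts more from each minimal summand, for example the much larger bound in (vi), but that extra strength is not needed for this statement. Your organisation is in fact the one the paper itself adopts later for $n=3$ in Theorem~\ref{possible_form_of_e_x_n_3}. Your proof of $t\ge 2$ (Step~3) is identical to the paper's.

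Your minimal-modulus reduction (the opening paragraph and Step~2) is valid but superfluous. None of the three cases in Step~4 actually uses $p_2\mid k_x$:
\begin{itemize}
\item a $c$-exponent consisting of a single prime is contradictory for every prime $p_j$;
\item any two distinct primes from $\{p_1,\dots,p_t\}$ already give $\min(2p_i+p_j,\,p_i+2p_j)\ge 2p_1+p_2$;
\item any three or more give at least $p_1+p_2+p_3-4$.
\end{itemize}
Your own closing remark about the case $p_2\nmid k_x$ already shows this. So you may take any $x\neq 0$, exactly as the paper does. Doing so also removes your dependence on Proposition~\ref{existence_of_the_gbf_to_lower_order}, whose proof the paper only indicates by reference to the literature.
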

\begin{proof}
    Suppose that $f~:~ \mathbb{Z}_3^n \rightarrow \mathbb{Z}_m$ is a GBF. From Lemma 
    \ref{E_x_is_a_vanishing_sum}, it follows that $E_x$ is a $v$-sum for every $x \in \mathbb{Z}_3^n \setminus \{1_G\}$. Now, we consider the following cases.

    \medskip
    \noindent \textbf{Case 1: $t=1$.}\\
     By Lemma \ref{C_expo_is_square_free}, if $t=1$, then the $c$-exponent of $E_x$ will be $p_1$. From Proposition \ref{Discription_of_v-sum} $(ii)$, we have $E_x=D_{p_1}Y$ for some $Y\in \mathbb{N}[C_m]$. Therefore, $3^n=\lVert E_x \rVert = p_1 \lVert y \rVert$, which is impossible since $p_1$ does not divide $3^n$. Therefore, the case $t=1$ cannot occur.
     
    \medskip
    \noindent \textbf{ Case 2: $t \geq 2$.}\\
    As $E_x \in \mathbb{N}[C_m]$ is a $v$-sum, it can be written as a sum of minimal $v$-sums. Let $E_x=\sum_{i=1}^{w} X_i$, where each $X_i$ is a minimal $v$-sum with reduced exponent $k_i$. If $|\mathcal{P}(k_i)| \geq 4$ for some $1\leq i\leq w$, then by Proposition \ref{Discription_of_v-sum}(i), we obtain
    \[
        3^n \geq \|X_i\|
        \geq 2+(p_1-2)+(p_2-2)+(p_3-2)+(p_4-2)
        \geq 3p_1+p_2
        \geq 2p_1+p_2.
    \]
    Thus, the desired inequality holds. 

    \noindent Now assume that $|\mathcal{P}(k_i)| \leq 3$ for all $1 \leq i \leq w$. By Proposition \ref{Discription_of_v-sum}(vi), $|\mathcal{P}(k_i)|$ is either 1 or 3. 

    \medskip
    \noindent \textbf{Case 2.1:} $|\mathcal{P}(k_i)|=3$  for some $1\leq i \leq w$. \\
    From Proposition $\ref{Discription_of_v-sum}$(vi), it follows that 
    \begin{align*}
        \lVert X_i \rVert &\geq ~ (p_1-1)(p_2-1)+(p_3-1) ,\\
        3^n &\geq 4(p_2-1)+p_3 -1\\
        &\geq 3p_2+p_1 ~\geq ~2p_2+p_1\geq 2p_1+p_2.
    \end{align*}
    Hence, the desired inequality follows.\\
    It therefore remains to consider the case in which $|\mathcal{P}(k_i)|=1$ for all $i$.

    \medskip
    \noindent \textbf{Case 2.2:} $|\mathcal{P}(k_i)|=1$ for all $i$.\\
    In this case, each $k_i$ is a prime number, say $k_i=q_i$. From Proposition \ref{Discription_of_v-sum}$(v)$, it follows that $X_i=D_{q_i}Y_i$ for some $Y_i \in C_m$. Thus, \[E_x =\sum_{i=1}^w D_{q_i}Y_i.\] If all $q_i$ are equal, then $E_x=D_{q_1}Y$ for some $Y\in \mathbb{N}[C_m]$, which is impossible since $q_1\nmid 3^n$. Therefore, at least two of the primes $q_i$ are distinct. Without loss of generality, assume that $q_1 \neq q_2$. Then \[E_x =D_{q_1}Y_1+D_{q_2}Y_2+\sum_{i=3}^wD_{q_i}Y_i.\] Therefore, $3^n \geq q_1+q_2+(w-2)p_1$.\\
    If $w \geq 3$, then clearly $3^n \geq q_1+q_2+p_1 \geq 2p_1+p_2$. Furthermore, if $w=2$, then $E_x = D_{q_1}Y_1+D_{q_2}Y_2$, which implies $3^n=q_1+q_2$. This is impossible because $q_1+q_2$ is an even number, whereas $3^n$ is odd. Hence, the desired result follows.
\end{proof}

\begin{corollary} \label{non_exist_for_n_1_2_and_m_odd}
    Let $m$ be an odd integer not divisible by $3$. Then no generalized bent function $f:\mathbb{Z}_3^n \to \mathbb{Z}_m$ exists for $n=1$ or $n=2$.
\end{corollary}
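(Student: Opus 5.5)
The plan is to argue by contradiction using Theorem~\ref{Nonexistence_of _gbf_for_odd_m}. Let $m$ be odd and not divisible by $3$. Suppose a GBF $f:\mathbb{Z}_3^n \to \mathbb{Z}_m$ exists with $n\in\{1,2\}$.

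First dispose of the trivial case $m=1$. Then $\zeta_m^{f(x)}=1$ for all $x$, so $B_f=\sum_{x\in G}x$. For the principal character this gives $|\chi_0(B_f)|^2=3^{2n}\neq 3^n$, so $f$ is not bent. (Alternatively, $E_x=3^n\cdot 1$ is not a vanishing sum for $x\neq 1_G$, contradicting Lemma~\ref{E_x_is_a_vanishing_sum}.) Hence $m>1$.

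Since $m$ is odd and coprime to $3$, every prime divisor of $m$ is at least $5$. We may therefore write $m=\prod_{j=1}^t p_j^{\alpha_j}$ with $5\le p_1<\cdots<p_t$ and $t\ge 1$, exactly as in the setup of Theorem~\ref{Nonexistence_of _gbf_for_odd_m}. That theorem then gives $t\ge 2$ and $2p_1+p_2\le 3^n\le 9$. On the other hand, $p_1\ge 5$ and $p_2\ge 7$, so $2p_1+p_2\ge 17>9$. This contradicts the bound for both $n=1$ and $n=2$, so no GBF exists.

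There is no real obstacle; the only point needing care is the degenerate case $m=1$, which is excluded from the theorem's hypothesis $t\ge1$. That case is handled directly above.
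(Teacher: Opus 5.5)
Your proof is correct and is the same as the paper's: the corollary follows directly from the theorem giving $t\ge 2$ and $2p_1+p_2\le 3^n$, since $2p_1+p_2\ge 2\cdot 5+7=17>9\ge 3^n$ for $n\in\{1,2\}$. Your separate treatment of $m=1$ is a worthwhile detail that the paper leaves implicit. In that case $t=0$, which neither case of that theorem's proof covers, and your direct computation $|\chi_0(B_f)|^2=3^{2n}\neq 3^n$ settles it.
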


\noindent  We now specialise the general discussion to the case $n=3$.
 
\subsection*{Nonexistence results when $n=3$} Now, we use Proposition \ref{Discription_of_v-sum} to determine the possible forms of the autocorrelation terms $E_x$ in the case $n=3$. Throughout the remainder of this section, we set $G=\mathbb{Z}_3^3$.
\begin{theorem}\label{possible_form_of_e_x_n_3}
    Let $f: \mathbb{Z}_3^3 \rightarrow \mathbb{Z}_m$ be a GBF, where $m = \prod_{j=1}^tp_j^{\alpha_j}$ with primes satisfying $5\leq p_1 < p_2 \cdots < p_t$ and $\alpha_j$ are positive integers. Then, for each $x\in G\setminus {1_G}$, $E_x$ is one of the following forms:
    \begin{enumerate}[label=(\roman*)]
    \item $E_x = D_5\bigl(g^{\alpha_x} + g^{\beta_x} + g^{\gamma_x} + g^{\delta_x}\bigr) + D_7 g^{\omega_x},$
    \item $E_x = D_5 g^{\alpha_x} + D_{11}\bigl(g^{\beta_x} + g^{\gamma_x}\bigr),$
    \item $E_x = D_5\bigl(g^{\alpha_x} + g^{\beta_x}\bigr) + D_{17} g^{\gamma_x},$
    \item $E_x = D_7\bigl(g^{\alpha_x} + g^{\beta_x}\bigr) + D_{13} g^{\gamma_x},$
    
    \end{enumerate}
     where $\alpha_x, \beta_x, \gamma_x, \delta_x,$ and $\omega_x$ are integers.
\end{theorem}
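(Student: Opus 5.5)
We follow the case analysis in the proof of Theorem~\ref{Nonexistence_of _gbf_for_odd_m}, now with $3^n=27$. Fix $x\in G\setminus\{1_G\}$. By Lemma~\ref{E_x_is_a_vanishing_sum}, $E_x$ is a $v$-sum with $\lVert E_x\rVert=27$. Decompose $E_x=\sum_{i=1}^w X_i$ into minimal $v$-sums with reduced exponents $k_i$. Each $k_i$ is squarefree (Lemma~\ref{C_expo_is_square_free}) and its prime factors are primes $\geq 5$.

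\textbf{Excluding composite $k_i$.} First we rule out $|\mathcal{P}(k_i)|\geq 2$. By Proposition~\ref{Discription_of_v-sum}(vi), $|\mathcal{P}(k_i)|=2$ is impossible for a minimal $v$-sum. If $|\mathcal{P}(k_i)|\geq 3$, part (vi) gives
\[
\lVert X_i\rVert\geq (5-1)(7-1)+(11-1)=34>27.
\]
The same contradiction follows from part (i) when $|\mathcal{P}(k_i)|\geq 4$, since then $\lVert X_i\rVert\geq 2+3+5+9+11=30>27$. Hence every $k_i$ is a prime $q_i$. By Proposition~\ref{Discription_of_v-sum}(v), $X_i=D_{q_i}g^{a_i}$ for some integer $a_i$. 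Grouping the terms by prime, we get
\[
E_x=\sum_{q}D_q Y_q,\qquad Y_q\in\mathbb{N}[C_m],\qquad \sum_q q\,n_q=27,
\]
where $n_q=\lVert Y_q\rVert$ and $q$ runs over primes $\geq 5$ dividing $m$.

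\textbf{Solving the counting equation.} The problem is now to find all ways of writing $27$ as a sum of primes $\geq 5$ with multiplicities. Since $3\nmid 27-0$ is irrelevant here, we simply enumerate; the parity observation from the proof of Theorem~\ref{Nonexistence_of _gbf_for_odd_m} helps, since a sum of odd primes equal to $27$ must use an odd number of summands. The only case needing care is one prime used alone, and it is excluded because $q\nmid 27$ for $q\geq 5$. With one summand we need $q=27$, which is not prime. With three summands $a+b+c=27$, $a\leq b\leq c$, we get $5+5+17$, $5+11+11$, $7+7+13$, and $5+7+15$ is rejected. With five summands we get $5+5+5+5+7$; $5\cdot 5=25$ leaves $2$, so that is the only option. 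Seven or more summands would total at least $35>27$. This yields exactly forms (iii), (ii), (iv) and (i) respectively. We should also check $3+\ldots$, but the prime $3$ is excluded since $p_1\geq 5$, and $7+7+7=21$ is short.

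\textbf{Main obstacle.} The key step is the first one: showing that minimal $v$-sums with composite reduced exponent cannot occur. This rests entirely on the bound in Proposition~\ref{Discription_of_v-sum}(vi) exceeding $27$ for the smallest admissible primes $5,7,11$. One more subtlety needs checking. A decomposition into minimal $v$-sums might not be unique, but our argument applies to any decomposition, and every decomposition contains only prime reduced exponents, so the conclusion holds. The rest is bookkeeping: writing each $Y_q$ as a sum of $n_q$ group elements $g^{\alpha}$ gives the stated exponents $\alpha_x,\beta_x,\dots$.
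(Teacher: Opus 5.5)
Your proof is correct, but it takes a different route from the paper's proof of this theorem.

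\textbf{The paper's route.} The paper works with the $c$-exponent $k_x=q_1\cdots q_r$ of $E_x$ as a whole. It uses the structure results for general $v$-sums, Proposition~\ref{Discription_of_v-sum}(ii)--(iv), and splits by $r$:
\begin{itemize}
\item for $r\geq 3$ it checks $|\operatorname{supp}(E_x)|\leq 27\leq (q_1-1)(q_2-1)+q_3-1$, writes $E_x=\sum_i Y_iD_{q_i}$, and rules out $27=\sum_i q_i\lVert Y_i\rVert$;
\item for $r=1$ it uses $q\nmid 27$;
\item for $r=2$ it writes $E_x=D_{q_1}Y_1+D_{q_2}Y_2$ and solves $27=q_1n_1+q_2n_2$.
\end{itemize}

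\textbf{Your route.} You split $E_x$ into minimal $v$-sums and need only the two results about minimal sums, parts (v) and (vi). The weight bound $(5-1)(7-1)+(11-1)=34>27$ forces every minimal piece to be a single translate $D_q g^{a}$. The whole problem then becomes one partition of $27$ into primes $\geq 5$, and your enumeration is right. Parity leaves only $3$ or $5$ summands, giving exactly $5+5+17$, $5+11+11$, $7+7+13$ and $5+5+5+5+7$.

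\textbf{Comparison.} Your route is the strategy the paper itself uses in the proof of Theorem~\ref{Nonexistence_of _gbf_for_odd_m}. It is more elementary and avoids the $c$-exponent bookkeeping. That includes the paper's opening claim that $E_x$ is itself a minimal $v$-sum, which is false for each of forms (i)--(iv). The paper's formulation has one small advantage: it identifies $k_x$ directly as a product of two primes, which is how the theorem is used later. This also follows from your argument, since every decomposition of $E_x$ into minimal $v$-sums is forced into one of the four patterns.

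Delete the garbled sentence about $3\nmid 27-0$ and the aside about $7+7+7$; they contribute nothing.
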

\begin{proof} 
Suppose $f: G \rightarrow \mathbb{Z}_m$ is GBF. Then, for each $x \in G \setminus \{1_G\}$, the element $E_x$ is a minimal $v$-sum. Let $k_x$ denote the $c$-exponent of $E_x$. By Lemma \ref{C_expo_is_square_free}, we can write $k_x =\prod \limits_{i=1}^rq_i$, where $q_1 < q_2 \cdots <q_r$ are distinct primes and $\{q_1,q_2,\ldots, q_r\} \subseteq \{p_1,p_2, \ldots ,p_t\} $.

\medskip
\noindent \textbf{Case 1:} $r \geq 3$.\\
Note that 
\[ |\operatorname{supp}(E_x)|=27 \leq (5-1)(7-1)+(10-1) \leq (q_1-1)(q_2-1)+q_3-1.
\]
By Proposition \ref{Discription_of_v-sum} (iv), we have $E_x= \sum_{i=1}^rY_iD_{q_i}$, where $Y_i \in \mathbb{N}[C_m]$. Clearly, $\lVert E_x \rVert =\sum_{i=1}^rq_i \lVert Y_i \rVert $. If $r \geq 4$, then $27 = \sum_{i=1}^rq_i\lVert Y_i \rVert \geq 5+7+11+13$, which is not possible. Hence, we may assume that $r=3$. In this case, $27=\sum_{i=1}^3q_in_i$, where $n_i =\lVert Y_i \rVert \in \mathbb{N}$. Considering the smallest possible primes, we have 
\begin{align*}
    5+7+11 & \leq 23,\\
    10+7+11 & \geq 27,\\
    7+11+13& \geq 27.
\end{align*} 
Thus, there is no possible choice of $(q_1,q_2,q_3)$ such that $27=\sum_{i=1}^3q_in_i$ holds. Hence, the case $r\geq 3$ is not possible. Therefore, we must have $r \leq 2$.

\medskip
\noindent \textbf{Case 2:} $r=1$.\\ 
In this case, by Proposition \ref{Discription_of_v-sum}$(v)$, we have $E_x= D_qY$, and hence $27=q \lVert Y \rVert$. This is not possible since $q \geq 5$.

\medskip
\noindent \textbf{Case 3:} $r=2$.\\ 
By Proposition \ref{Discription_of_v-sum}$(iii)$, we have 
$E_x=D_{q_1}Y_1+D_{q_2}Y_2$. Therefore,
\begin{align} \label{Eq^n_btw_possible_value_of_$E_x$}
    27=\lVert E_x \rVert =q_1 \lVert Y_1 \rVert +q_2 \lVert Y_2 \rVert.
\end{align}
If $ \lVert Y_1 \rVert = \lVert Y_2 \rVert = 1$, then $27=q_1+q_2$, which is not possible since $q_1$ and $q_2$ are odd primes. Therefore, at least one of them is greater than $1$. The only possible values of $(q_1,q_2)$ for which Equation \eqref{Eq^n_btw_possible_value_of_$E_x$} holds are $(5,7),(5,11),(5,17),~\text{and}~(7,13)$. This leads to the possible forms of $E_x$. 
\end{proof}

 \begin{remark}
     For any non identity element $x \in \mathbb{Z}_3^3$, there exists a subgroup $H$ which is isomorphic to $\mathbb{Z}_3^2$ and $H, Hx, Hx^2$ are the distinct cosets of $H$ in $\mathbb{Z}_3^3$. Write $x=(x_1,x_2,x_3)\in \mathbb{Z}_3^3$. Since $x$ is a non identity element, at least one coordinate of $x$ is nonzero. Without loss of generality, assume that $x_1\neq 0$. Define $H=\{0\}\times\mathbb{Z}_3 \times \mathbb{Z}_3$. Clearly, $H$ is isomorphic to $\mathbb{Z}_3^2$. Moreover, $x\notin H$ since $x_1\neq 0$. Therefore, $Hx$ and $Hx^2$ are two distinct cosets of $H$ in $\mathbb{Z}_3^3$.

% \noindent  {\color{ red} What do you mean by $x^2$ in $\mathbb{Z}_3^3$. Let $x=(1,0,0)$, $x^2=?$}
 \end{remark}

Let  $x$ be a non identity element in $\mathbb{Z}_3^3$ and $H=\{e_1,e_2,\ldots,e_9\}$ be a subgroup of $\mathbb{Z}_3^3$ such that
 \begin{enumerate}[label=(\roman*)]
     \item $H$ is isomorphic to $\mathbb{Z}_3^2$.
     \item $Hx$ and $Hx^2$ are two distinct cosets of $H$ in $\mathbb{Z}_3^3$. 
 \end{enumerate}
Consider a map $f:\mathbb{Z}_3^3\rightarrow \mathbb{Z}_m$. Using the decomposition of $\mathbb{Z}_3^3$ into subgroup $H$ and the cosets $Hx$ and $Hx^2$, we can write
\begin{align} \label{expression _of_D_f_as_elemets_of cosets}
    D_f =\sum_{i=1}^9 g_i(e_i+h_ie_ix+w_ie_ix^2),
\end{align}
where 
\begin{align*}
    g_i&=g^{f(e_i)},\\
    h_i &= g^{f(e_ix)-f(e_i)},\\
    w_i &= g^{f(e_ix^2)-f(e_i)}.
\end{align*}
Note that 
\begin{align} \label{D_fD_f^-1_in_terms_of_oth_exp_of_D_f}
    D_fD_f^{(-1)} &= \left (\sum_{i=1}^9 g_i(e_i+h_ie_ix+w_ie_ix^2)\right)\left(\sum_{j=1}^9 g_j^{-1}(e_j^{-1}+h_j^{-1}e_j^{-1}x^2+w_j^{-1}e_j^{-1}x)\right),\\
    &= \sum_{i,j}g_ig_j^{-1}(1+h_ih_j^{-1}+w_iw_j^{-1})e_ie_j^{-1} \nonumber\\
    &+\sum_{i,j}g_ig_j^{-1}(h_i+w_j^{-1}+w_ih_j^{-1})e_ie_j^{-1}x \nonumber\\
    &+\sum_{i,j}g_ig_j^{-1}(h_j^{-1}+w_i+h_iw_j^{-1})e_ie_j^{-1}x^2. \nonumber
\end{align}
Comparing Equations (\ref{D_fD_f-1 =_sum_of E_x}) and (\ref{D_fD_f^-1_in_terms_of_oth_exp_of_D_f}), for each $1\leq k \leq 9$, we obtain the following expressions
\begin{align}
    E_{e_k} &= \sum_{\substack{i,j\\e_ie_j^{-1}=e_k}}g_ig_j^{-1}\bigl(1+h_ih_j^{-1}+w_iw_j^{-1}\bigr),\nonumber\\
    E_{e_kx}&=\sum_{\substack{i,j\\e_ie_j^{-1}=e_k}}g_ig_j^{-1}(h_i+w_j^{-1}+w_ih_j^{-1}), \nonumber\\
    E_{e_kx^2} &=\sum_{\substack{i,j\\e_ie_j^{-1}=e_k}}g_ig_j^{-1}(h_j^{-1}+w_i+h_iw_j^{-1}).\nonumber
\end{align}
In particular, for the coset representative $x$, we have 
\begin{align}\label{exp_of_E_x}
    E_x=\sum_{i=1}^9 h_i+w_i^{-1}+w_ih_i^{-1}.
\end{align}
We now define two group homomorphisms that will play an important role in establishing some nonexistence results for GBFs.
\begin{enumerate}[label=(\roman*)]

    \item Let
        \(
        m=\prod_i p_i^{a_i}\)
        and \( m'=\prod_i p_i^{b_i}\),
        where $a_i>0$, $b_i\geq 0$, and $b_i\leq a_i$ for each $i$.
        Each element of $C_m$ can be written uniquely as $\prod_i g_i$, where
        $\operatorname{ord}(g_i)=p_i^{a_i}$. Define a homomorphism
        \[
        \phi_{m'}:C_m \rightarrow C_{m/m'}
        \]
        by
        \begin{align}\label{phi map}
        \phi_{m'}\left(\prod_i g_i\right)=\prod_i g_i^{p_i^{b_i}}.
        \end{align}
    \item For $x \in G \setminus{1_G}$, define $\tau_x:\mathbb{Z}[C_m\cdot G] \rightarrow \mathbb{Z}[C_m \cdot H][\omega]$ a homomorphism such that 
    \begin{align}
        \tau_x{(e_i)} &=e_i \qquad \text{for all } 1 \leq i \leq 9, \nonumber \\
        \tau_x{(x)} & = \omega, \\
        \tau_x{(g)}& =g  \qquad \text{for} \hspace{2mm}g \in C_m, \nonumber
    \end{align}
where $H=\{e_1,e_2,\ldots,e_9\}$ is a subgroup of $\mathbb{Z}_3^3$ that is isomorphic to $\mathbb{Z}_3^2$ so that $Hx$ and $Hx^2$ are two other cosets of $H$ in $\mathbb{Z}_3^3$.  
\end{enumerate}
Equation \eqref{exp_of_E_x} provides an alternative representation of $E_x$, which allows us to express $D_fD_f^{(-1)}$ in two different ways. By applying these homomorphisms to $D_fD_f^{(-1)}$ and comparing the resulting expressions, we obtain contradictions involving the coefficients of $E_x$. In the remainder of this section, we will consider $m=5m'$ and $5\nmid m'$.

\subsubsection*{Nonexistence results when $n=3, m=5m'$ and $5\nmid m'$ }
\begin{lema}\label{special_form_of_E_x on action of phi_m}
Let $\phi_{m'}:C_{m}  \rightarrow C_5$ be the group homomorphism defined in (\ref{phi map}), where $m=5m'$ and $5\nmid m'$. Suppose that there exists $x \in G\setminus\{1_G\}$ such that $E_x= D_5g^{\alpha_x}+D_{11}\bigl(g^{\beta_x}+g^{\gamma_x}\bigr)$. Then 
\begin{align*}
    \phi_{m'}(E_x)=22+D_5=23+u_1+u_1^{-1}+u_2+u_2^{-1},
\end{align*}
where $u_i \in C_5$.  
\end{lema}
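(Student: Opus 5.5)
The plan has two parts: first compute $\phi_{m'}(E_x)$ directly from the assumed shape of $E_x$, then compute it a second time from the coset expression \eqref{exp_of_E_x} and compare the two.

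\textbf{Direct computation.} The map $\phi_{m'}$ kills the $p_i$-parts for primes $p_i\neq 5$, since $b_i=a_i$ there. On the $5$-part it raises to the power $5^{b}$ with $b=0$, so it is the identity. Hence $\phi_{m'}$ is the natural projection $C_m\to C_5$, which annihilates every element of order coprime to $5$. In particular $\phi_{m'}(D_{11})=11\cdot 1$, because every element of $D_{11}$ has order dividing $11$. Likewise $\phi_{m'}(D_5)=D_5$, since $D_5$ is the Sylow $5$-part of the cyclic group and maps isomorphically onto $C_5$. Also $\phi_{m'}(g^{\alpha_x})D_5=D_5$, because $D_5$ absorbs any element of $C_5$. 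Therefore
\[
\phi_{m'}(E_x)=D_5+11\,\phi_{m'}(g^{\beta_x})+11\,\phi_{m'}(g^{\gamma_x}).
\]
To reach the claimed $22+D_5$ I must show $\phi_{m'}(g^{\beta_x})=\phi_{m'}(g^{\gamma_x})=1$. Writing $D_5=1+u+u^{-1}+u^2+u^{-2}$ for a generator $u$ of $C_5$ then gives the second form, with $u_1=u$ and $u_2=u^2$.

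\textbf{Second expression via \eqref{exp_of_E_x}.} The value $\phi_{m'}(E_x)$ is also $\sum_{i=1}^9 \bigl(a_i+b_i^{-1}+b_ia_i^{-1}\bigr)$, where $a_i=\phi_{m'}(h_i)$ and $b_i=\phi_{m'}(w_i)$ lie in $C_5$. Each summand has the form $a+b^{-1}+ba^{-1}$; its three terms multiply to $1$, and the sum is invariant under the substitution $(a,b)\mapsto(b^{-1},a^{-1})$. I then compare coefficients in $\mathbb{Z}[C_5]$. The left side equals $D_5+11v+11v'$ with $v,v'\in C_5$, and the right side is a sum of $9$ such triples. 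I would determine which multisets of size $27$ in $C_5$ can arise this way.

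A natural invariant is the ``inverse-symmetry'' structure. Consider the coefficient difference $c(u)-c(u^{-1})$, or equivalently apply the order-$5$ characters to both expressions. Since $f$ is a GBF, Lemma~\ref{E_x_is_a_vanishing_sum} gives that $\tau(E_x)=0$ for characters of order $m$. That fact alone does not constrain $C_5$-characters directly, but the explicit form $D_5+11(v+v')$ does: it forces $11(\zeta^{s}+\zeta^{s'})$ to equal $\psi(\sum\text{triples})$. The cleanest route is to apply $\tau_x$ together with $\phi_{m'}$ to $D_fD_f^{(-1)}=27$ and use the coset decomposition. The coefficient of $x$ in $D_fD_f^{(-1)}$ is $0$ after any order-$m$ character, and this relates the sums over the cosets $Hx$ and $Hx^2$. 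Specifically, $E_{x^2}=E_x^{(-1)}$, which is clear from the definition of the autocorrelation. So $E_{x^2}$ has the same shape, with $v^{-1}$ and $v'^{-1}$ in place of $v$ and $v'$, and its coset formula pairs the triples $(h_j^{-1}+w_i+h_iw_j^{-1})$ with those of $E_x$.

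\textbf{Key obstacle.} The hard step is showing that the two $11$-blocks land on the identity of $C_5$, that is, $\phi_{m'}(g^{\beta_x})=\phi_{m'}(g^{\gamma_x})=1$. My first approach is a counting argument on the triples. Among the $9$ triples $a_i+b_i^{-1}+b_ia_i^{-1}$, the non-$D_5$ part of $\phi_{m'}(E_x)$ has $22$ entries concentrated on at most two elements $v,v'$. I would check, by a case analysis on how many triples are constant (all three entries equal, which forces $a^3=1$ and hence $a=1$ in $C_5$), that at least $7$ triples must be $1+1+1$. This is because a triple with all entries equal to $v\neq1$ is impossible, and a mixed triple contributes at most two entries to $\{v,v'\}$. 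This pigeonhole step, bounding $22$ entries against $27$ total with only $5$ entries available for $D_5$, should force $v=v'=1$ and yield $\phi_{m'}(E_x)=22+D_5$.
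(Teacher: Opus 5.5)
Your first step is correct: $\phi_{m'}$ is the projection onto the $5$-part, so $\phi_{m'}(E_x)=D_5+11v+11v'$ with $v=\phi_{m'}(g^{\beta_x})$ and $v'=\phi_{m'}(g^{\gamma_x})$. Matching this multiset against the nine layers $a_i+b_i^{-1}+b_ia_i^{-1}$, each of product $1$, is also how the paper proceeds.

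\textbf{The gap.} Your pigeonhole step rests on the claim that a non-constant layer has at most two entries in $\{v,v'\}$, and this is false. It fails when $v\neq v'$ are both nontrivial and $v^2v'=1$ (or $vv'^2=1$). For example, $v=u$, $v'=u^3$ gives the admissible layer $u+u+u^3$ (take $a=u$, $b=u^{-1}$), which lies entirely in $\{v,v'\}$. In that configuration as many as six layers avoid the three remaining elements of $C_5$. So your conclusion that ``at least seven layers are $1+1+1$'' is not justified. This is exactly the situation the paper has to isolate (its Cases 2.3--2.6). Nothing else in your proposal handles it: the remarks on characters, $\tau_x$ and $E_{x^2}=E_x^{(-1)}$ are never carried through.

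\textbf{A short repair.} Multiply all $27$ entries.
\begin{itemize}
\item The layers give total product $1$.
\item The entries of $D_5+11v+11v'$ multiply to $\bigl(\prod_{c\in C_5}c\bigr)v^{11}v'^{11}=vv'$.
\item Both sides have nonnegative coefficients, so the multisets coincide and $v'=v^{-1}$.
\end{itemize}
Now suppose $v\neq 1$. Then $v'=v^{-1}\neq v$. A layer lying inside $\{v,v^{-1}\}$ would have product among $v^{\pm1},v^{\pm3}$, never $1$. Hence each of the nine layers must use one of the three remaining elements of $C_5$. Each of those occurs only once, which is impossible. Therefore $v=v'=1$, as claimed.

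Alternatively, in the case $v'=v^{-2}$ (the other case is symmetric) you can count directly. At most three layers contain a leftover element, so at least six layers are $(v,v,v')$. There are only twelve copies of $v$, so there are exactly six such layers and they use up every copy of $v$. The remaining three layers would then all have to be $(v',v',v'^{-2})$. This contradicts the fact that the three leftover elements are distinct.
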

\begin{proof}
 Let $\phi_{m'}:C_{5m'}  \rightarrow C_5$ be the group homomorphism defined in (\ref{phi map}). Suppose that there exists  $x \in G\setminus\{1_G\}$ such that $E_x= D_5g^{\alpha_x}+D_{11}(g^{\beta_x}+g^{\gamma_x})$, then
 \begin{align} \label{first_expression_of_phi_E_x}
     \phi_{m'}(E_x)=D_5+11(g_5^{\beta_x}+g_5^{\gamma_{x}}),
 \end{align}
 where $g_5$ is a generator of $C_5$.
 For this fixed $x \in G$, let $H=\{e_1,e_2,\ldots,e_9\}$ be a subgroup of $\mathbb{Z}_3^3$ isomorphic to $\mathbb{Z}_3^2$ such that $Hx$ and $Hx^2$ are two other cosets of $H$ in $\mathbb{Z}_3^3$. From Equation (\ref{exp_of_E_x}), we have $E_x= \sum_{i=1}^9h_i+w_i^{-1}+h_iw_i^{-1}$. Therefore, 
 \begin{align} \label{second_exp_of_phi_E_x}
   \phi_{m'}(E_x)= \sum_{i=1}^9 \Bigl( \phi_{m'}(h_i)+\phi_{m'}(w_i^{-1})+\phi_{m'}(h_i^{-1}w_i)\Bigr).
 \end{align}
 We can arrange the elements involved in Equation (\ref{second_exp_of_phi_E_x}) into $9$ layers, each consisting of $3$ elements. The $i^{th}$ layer is given by
\[
\bigl(
\phi_{m'}(h_i), \ \phi_{m'}(w_i^{-1}), \ \phi_{m'}(h_i^{-1} w_i)
\bigr).
\]
Observe that, within each layer, any one element is the inverse of product of other two. On the other hand, from Equation (\ref{first_expression_of_phi_E_x}), we can say that, apart from the contribution coming from $D_5$, the elements $g_5^{\beta_x}$ and $g_5^{\gamma_x}$ each appear exactly $11$ times. The remaining five elements are precisely those of $D_5$, namely $\{1, u_1, u_1^{-1}, u_2, u_2^{-1}\}$, where $u_i \in C_5$. Note that $g_5^{\beta_x}$, $g_5^{\gamma_x}\in \{1, u_1, u_1^{-1}, u_2, u_2^{-1}\} $.    Since each layer satisfies the property that one element is the inverse of the product of the other two, it follows that if a layer contains $1$ and an element $\alpha \in C_5$, it must also contain $\alpha^{-1}$.  
Without loss of generality, we may assume that $\{1, u_1, u_1^{-1}\}$ appears in one layer. Consequently, the elements of $D_5$ can occupy at most three layers. Therefore, at least six layers are entirely filled with the elements $g_5^{\beta_x}$ and $g_5^{\gamma_x}$. We will see the distribution of $g_5^{\beta_x}$ and $g_5^{\gamma_x}$ in these six layers. We now consider the following cases. 

\medskip
 \noindent \textbf{Case 1}: $\beta_x \equiv \gamma_x \pmod{5}$. \\
 Since any element in each row is the inverse of the product of two other elements. Therefore, this case will hold only when $(g_5^{\beta_x+\beta_x} )^{-1}=g_5^{\beta_x}$, which implies $-2\beta_x \equiv \beta_x \pmod{5}$, that is, $-3\beta_x \equiv 0 \pmod{5}$. This is possible only when $\beta_x \equiv 0 \pmod{5}$. Therefore, this case occurs only when $g_5^{\beta_x} = g_5^{\gamma_x}=1$. In particular, this case also shows that all three elements in a layer will be equal only if they are identity.

 \medskip
 \noindent \textbf{Case 2:} $\beta_x \not \equiv \gamma_x \pmod{5}$.\\
 In this case, $g_5^{\beta_x}$ and $g_5^{\gamma_x}$ are distinct elements of $C_5$. If $\gamma_x \not\equiv 0 \pmod{5}$, then from Case~1, a layer of the form 
$
(g_5^{\gamma_x}, g_5^{\gamma_x}, g_5^{\gamma_x})
$
cannot occur. Since $\beta_x \not\equiv \gamma_x \pmod{5}$, at least one of $\beta_x$ or $\gamma_x$ is nonzero modulo $5$. Therefore, in this case, each of the six layers consists of two distinct elements among $g_5^{\beta_x}$ and $g_5^{\gamma_x}$. Moreover,
$
(g_5^{\beta_x} g_5^{\gamma_x})^{-1}
$
must be equal to either $g_5^{\beta_x}$ or $g_5^{\gamma_x}$.

 \medskip
 \noindent \textbf{case 2.1} $\beta_x \equiv 0 \pmod{5}$ and $\gamma_x \not \equiv 0 \pmod{5}$.\\
Clearly, this case will occur only when $(g_5^{\gamma_x}\cdot g_5^{0})^{-1}$ is either $g_5^{0}$ or $g_5^{\gamma_x}$.\\
If $(g_5^{\gamma_x} \cdot g_5^0)^{-1} = g_5^0$, then $\gamma_x \equiv 0 \pmod{5}$, which is a contradiction. \\
If $(g_5^{\gamma_x} \cdot g_5^0)^{-1} = g_5^{\gamma_x}$, then $g_5^{-\gamma_x} = g_5^{\gamma_x}$, which implies $\gamma_x \equiv 0 \pmod{5}$, again a contradiction. \\
Hence, this case is not possible.

\medskip
\noindent \textbf{case 2.2:} $\beta_x \equiv -\gamma_x \pmod{5} $.\\
Here $(g_5^{\beta_x} g_5^{\gamma_x})^{-1} = g_5^{0}$, which is not possible since $\beta_x, \gamma_x \not\equiv 0 \pmod{5}$. Hence, this case will not occur.\\
Now consider the remaining following cases.

\medskip
\noindent \textbf{Case 2.3} $\beta_x \equiv 1 \pmod{5}$ and $\gamma_x \equiv 2 \pmod 5$. 

\noindent\textbf{Case 2.4} $\beta_x \equiv 1 \pmod{5}$ and $\gamma_x \equiv 3 \pmod {5}$.

\noindent \textbf{Case 2.5} $\beta_x \equiv 2 \pmod{5}$ and $\gamma_x \equiv 4 \pmod {5}$.

\noindent \textbf{Case 2.6} $\beta_x \equiv 3 \pmod{5}$ and $\gamma_x \equiv 4 \pmod {5}$.\\
In each of these cases, $(g_5^{\beta_x} g_5^{\gamma_x})^{-1}$ is equal to either $g_5^{\beta_x}$ or $g_5^{\gamma_x}$. As each layer contains two distinct elements, therefore, if $(g_5^{\beta_x} g_5^{\gamma_x})^{-1} = g_5^{\beta_x}$, then each such layer is of the form
\[
(g_5^{\beta_x}, g_5^{\beta_x}, g_5^{\gamma_x}).
\]
Similarly, if 
$(g_5^{\beta_x} g_5^{\gamma_x})^{-1} = g_5^{\gamma_x}$,
then each such layer is of the form
\[(g_5^{\beta_x}, g_5^{\gamma_x}, g_5^{\gamma_x}).\]
In either case, one of the elements $g_5^{\beta_x}$ or $g_5^{\gamma_x}$ appears at least $12$ times in these six layers. This contradicts the fact that, apart from the contribution coming from $D_5$, each of $g_5^{\beta_x}$ and $g_5^{\gamma_x}$ appears exactly $11$ times.\\ 
Therefore, the only possible value for $g_5^{\beta_x}$ and $g_5^{\gamma_x}$ is $1$. Hence $\phi_{m'}(E_x)=22+D_5=23+u_1+u_1^{-1}+u_2+u_2^{-1}$.
\end{proof}

\begin{prop} \label{Result_phi_of_diff_of_vanishing_sum}
  Let $\phi_{m'}:C_{m}  \rightarrow C_5$ be the group homomorphism defined in (\ref{phi map}), where $m=5m'$ and $5\nmid m'$. For $x\in G\setminus{\{1_G\}}$, let $H=\{e_1,e_2,\ldots,e_9\}$ be a subgroup of $G$ of order $9$ with $e_1=1_G$. From Equation (\ref{expression _of_D_f_as_elemets_of cosets}), we may write $D_f = \sum_{i=1}^9 g_i(1+h_ix+w_ix^2)$. Suppose there exists $x \in G\setminus{\{1_G\}}$ such that 
  \begin{align} \label{simplified_exp_for_phi_m'}
      \phi_{m'}(E_x)=22+D_5=23+u_1+u_1^{-1}+u_2+u_2^{-1},
  \end{align} where $u_1,u_2 \in C_5$. Then, upto relabeling of elements, one of the following holds: 
  \begin{enumerate}[label=(\roman*)]
      \item 
        \begin{align*}
            \phi_{m'}(h_1) &=u_1, \text ~{and}~ \phi_{m'}(w_1) \in \{1, u_1\}, \\
            \phi_{m'}(h_2) &= u_2, ~\text{and} ~\phi_{m'}(w_2)\in \{1,u_2\},\\
            \phi_{m'}(h_i) & =\phi_{m'}(w_i) = 1~ ~ ~ \text{ for all } ~~~3 \leq i \leq 9. 
        \end{align*}
      \item 
        \begin{align*}
           \phi_{m'}(h_1) &=u_1, \text ~{and}~ \phi_{m'}(w_1) \in \{1,u_1\}, \\
           \phi_{m'}(h_2) & \in \{1,u_2\}, \text ~{and}~ \phi_{m'}(w_2)= u_2,\\
           \phi_{m'}(h_i) & =\phi_{m'}(w_i) = 1~ ~ ~ \text{ for all } ~~~3 \leq i \leq 9. 
        \end{align*}
\end{enumerate}
 Moreover, the following identities hold:
  \begin{align*}
      \phi_{m'}(E_{e_1}-E_{e_1x^2})& = 4-u_1-u_1^{-1}-u_2-u_2^{-1},\\
      \phi_{m'}(E_{e_1x}-E_{e_1x^2})& =0,\\
      \phi_{m'}(E_{e_2}-E_{e_2x^2})& = 
      \begin{cases}
          \phi_{m'}(g_2)(1-u_1^{-1}-u_2+u_2u_1^{-1}),\\
          0,
      \end{cases}\\
     \phi_{m'}(E_{e_2x}-E_{e_2x^2})& =
     \begin{cases}
         \phi_{m'}(g_2)\left ( 1-u_1^{-1}-u_2+u_2u_1^{-1} \right ),\\
         0.
     \end{cases}
  \end{align*}
\end{prop}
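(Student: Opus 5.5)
The plan is to exploit the layer structure from the proof of Lemma~\ref{special_form_of_E_x on action of phi_m}. By Equation~(\ref{exp_of_E_x}), $\phi_{m'}(E_x)$ is the multiset union of the $9$ layers
\[
L_i=\bigl(\phi_{m'}(h_i),\ \phi_{m'}(w_i)^{-1},\ \phi_{m'}(h_i^{-1}w_i)\bigr),
\]
and the product of the three entries of each layer is $1$. By (\ref{simplified_exp_for_phi_m'}), these $27$ entries consist of $23$ copies of $1$ together with $u_1,u_1^{-1},u_2,u_2^{-1}$. Since $D_5=1+u_1+u_1^{-1}+u_2+u_2^{-1}$, these four elements are exactly the four distinct nonidentity elements of $C_5$. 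Write them additively as $\{a,-a,2a,-2a\}$ in $\mathbb{Z}_5$.

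The first step is a counting argument on a single layer. A layer cannot contain exactly one nonidentity entry, since the product would then be $\neq 1$. It cannot contain three nonidentity entries either. These would have to be three distinct elements of $\{1,2,3,4\}\subset\mathbb{Z}_5$, because each nonidentity value occurs only once overall. The four possible triples have sums $6,7,8,9$, none of which is $\equiv 0 \pmod 5$. Hence every layer contains $0$ or $2$ nonidentity entries, and those two entries must be mutually inverse. A pair such as $u_1,u_2$ is excluded because $u_1u_2\neq 1$ (as $u_2\neq u_1^{-1}$). Therefore exactly two layers are nontrivial, one carrying $\{u_1,u_1^{-1}\}$ and one carrying $\{u_2,u_2^{-1}\}$. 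After relabeling the $e_i$ with $i\ge 2$, we may call these layers $L_1$ and $L_2$. Every other layer is $(1,1,1)$, which forces $\phi_{m'}(h_i)=\phi_{m'}(w_i)=1$ for $3\le i\le 9$.

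The second step lists the shapes of a nontrivial layer $(h,w^{-1},h^{-1}w)$ carrying $\{u,u^{-1}\}$ (dropping $\phi_{m'}$). If $h=u$, then either $w=1$ or $w=u$. If $h=1$, then $w\in\{u,u^{-1}\}$, and after possibly replacing $u$ by $u^{-1}$ (a relabeling that leaves $D_5$ unchanged) we get $w=u$. The same holds with $h=u^{\pm1}$ normalized to $h=u$. Using the freedom to rename $u_1\leftrightarrow u_1^{-1}$ and $u_2\leftrightarrow u_2^{-1}$, and to swap which of the two nontrivial layers is called layer $1$, we normalize layer $1$ to $h_1=u_1$. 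The possibilities for layer $2$ then split into (i) $h_2=u_2$, $w_2\in\{1,u_2\}$, or (ii) $h_2\in\{1,u_2\}$, $w_2=u_2$. The delicate point is the case where both nontrivial layers have $h=1$. I would handle it using the symmetry $x\leftrightarrow x^2$, which exchanges the roles of $h_i$ and $w_i$ in the decomposition (\ref{expression _of_D_f_as_elemets_of cosets}), to bring it into the stated normal form.

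The third step verifies the identities by substituting these values into the formulas for $E_{e_k}$, $E_{e_kx}$ and $E_{e_kx^2}$.
\begin{itemize}
\item Since $e_1=1_G$, we have $E_{e_1}=27$. The diagonal pairs $i=j$ give $E_{e_1x^2}=\sum_i\bigl(h_i^{-1}+w_i+h_iw_i^{-1}\bigr)$, which is the inverse multiset of $E_x$. Its image is therefore $23+u_1+u_1^{-1}+u_2+u_2^{-1}$, which yields $4-u_1-u_1^{-1}-u_2-u_2^{-1}$ for the first difference.
\item Because $D_5$ is closed under inversion, $\phi_{m'}(E_{e_1x})=\phi_{m'}(E_{e_1x^2})$, which gives the second identity.
\item For $e_2$, consider the pairs $(i,j)$ with $e_ie_j^{-1}=e_2$. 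Whenever both $i,j\ge 3$, the terms of $E_{e_2}$ and $E_{e_2x^2}$ coincide, both being $3\phi_{m'}(g_ig_j^{-1})$, and they cancel. Only the pairs $(2,1)$, $(1,e_2^{-1})$ and $(e_2,\cdot)$, i.e. those involving index $1$ or $2$, survive. A direct computation in each of cases (i) and (ii) gives either $\phi_{m'}(g_2)\bigl(1-u_1^{-1}-u_2+u_2u_1^{-1}\bigr)$ or $0$, and the same holds for $E_{e_2x}-E_{e_2x^2}$.
\end{itemize}

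The main obstacle is this bookkeeping for $e_2$. One has to track which index pairs land on $e_2$, which in $\mathbb{Z}_3^2$ includes the pair where index $2$ is paired with $e_2^{2}$. One also has to make the relabeling conventions from the second step consistent with the single stated formula.
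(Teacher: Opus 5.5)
The gap is in the positional normalization of your second step. Since $e_1=1_G$ is fixed, layer $1$ is the layer of the coset $\langle x\rangle$ itself, and nothing forces it to be nontrivial. The two nontrivial layers could sit at, say, $e_3$ and $e_5$. Permuting the labels $e_2,\dots,e_9$ cannot change that, and ``swapping which nontrivial layer is called layer $1$'' is not a relabeling either. Likewise, the $(2,1)$ term produces $\phi_{m'}(g_2g_1^{-1})$, not $\phi_{m'}(g_2)$, unless $g_1=1$.

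The missing idea is translation invariance, which is what the paper's ``suitable translate'' does. Replacing $f$ by $z\mapsto f(z+y)+c$ leaves every $E_z$ unchanged. Translating by $e_k^{-1}$ moves the nontrivial layer at $e_k$ to index $1$, and the constant gives $g_1=1$. Translating by a power of $x$ cyclically rotates every layer $(h_i,w_i^{-1},h_i^{-1}w_i)$, which arranges $\phi_{m'}(h_1)\neq 1$ without changing $x$.

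Your $x\leftrightarrow x^2$ device also works for that last point. It is legitimate because $\phi_{m'}(E_{x^2})=\phi_{m'}(E_x)^{(-1)}=22+D_5$, and once layer $1$ is nontrivial it handles $h_1=1$ on its own, whatever layer $2$ is. But it cannot substitute for the translation by $e_k^{-1}$. The rest of your first step is correct: the layer count is more careful than the paper's. So is your derivation of the two $e_1$-identities from $E_{1_G}=27$ and $E_{x^2}=E_x^{(-1)}$, which needs no normalization at all.

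With that repaired, your direct bookkeeping for $e_2$ is a valid alternative to the paper's route. The paper applies $\tau_x$ ($x\mapsto\omega$), so every trivial layer dies by $1+\omega+\omega^2=0$. It then reads off the $e_2$-coefficient $A_2\overline{A_1}$, with $A_i=\phi_{m'}(g_i)(1+\phi_{m'}(h_i)\omega+\phi_{m'}(w_i)\omega^2)$, in the basis $\{1,\omega\}$. The $\tau_x$ trick performs the cancellations for you; your version makes them explicit. In your version the mixed pairs $(1,k)$ and $(k,2)$ with $e_k=e_2^2$ also cancel: if either layer of a pair is trivial, its three contributions to $E_{e_2},E_{e_2x},E_{e_2x^2}$ coincide. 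So only $(i,j)=(2,1)$ survives.

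However, the outcome is not always ``$P$ or $0$'' with $P=\phi_{m'}(g_2)(1-u_1^{-1}-u_2+u_2u_1^{-1})$. Take $h_1=w_1=u_1$, $h_2=u_2$, $w_2=1$, which is allowed by (i). Then $\phi_{m'}(E_{e_2}-E_{e_2x^2})=0$ and $\phi_{m'}(E_{e_2x}-E_{e_2x^2})=-P$, which is why the paper's proof ends with a $\pm$. To obtain the statement literally, re-normalize in that configuration: swap the two layers by translation and rename $u_1\leftrightarrow u_2$. This lands in $h_1=u_1$, $w_1=1$, $h_2=w_2=u_2$, which gives $P$ in both slots.
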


\begin{proof}
   From Equation (\ref{exp_of_E_x}), we have $E_x=\sum_{i=1}^9 (h_i+w_i^{-1}+h_i^{-1}w_i)$. Therefore,
   \begin{align} \label{phi_m'_in_terms_of_h_w_wh}
       \phi_{m'}(E_x)=\sum_{i=1}^9 \phi_{m'}(h_i)+\phi_{m'}(w_i^{-1})+\phi_{m'}(h_i^{-1}w_i).
   \end{align}
Comparing Equations (\ref{simplified_exp_for_phi_m'}) and (\ref{phi_m'_in_terms_of_h_w_wh}), there exist indices $k$ and $j$ such that
\begin{align*}
    \{\phi_{m'}(h_k) ,\phi_{m'}(w_k^{-1}), \phi_{m'}(h_k^{-1}w_k) \} &=\{1,u_1,u_1^{-1}\},\\
    \{\phi_{m'}(h_j) ,\phi_{m'}(w_j^{-1}), \phi_{m'}(h_j^{-1}w_j) \} &=\{1,u_2,u_2^{-1}\},\\
    \{\phi_{m'}(h_i) ,\phi_{m'}(w_i^{-1}), \phi_{m'}(h_i^{-1}w_i) \} &=\{1\} ~~~ \text{for all}~~1\leq i\leq 9 ~\text{and} ~i\neq j,k.
\end{align*}
By replacing $D_f$ with a suitable translate, we may assume that $g_1=1_m$ and $\phi_{m'}(h_1)=u_1$. Consequently, $\phi_{m'}(w_1)$ is either $1$ or $u_1$. After relabeling $e_2,\ldots,e_9$, we may assume that 
\begin{align*}
    \{\phi_{m'}(h_2) ,\phi_{m'}(w_2^{-1}), \phi_{m'}(h_2^{-1}w_2) \} &=\{1,u_2,u_2^{-1}\}.
\end{align*}
Therefore, we have the following cases. 
\begin{enumerate}[label=(\roman*)]
\item[\textbf{Case 1:}] \begin{align*}
            \phi_{m'}(h_1) &=\phi_{m'}(w_1) = u_1, \\
            \phi_{m'}(h_2) &=\phi_{m'}(w_2)=u_2,\\
            \phi_{m'}(h_i) & =\phi_{m'}(w_i) = 1~ ~ ~ \text{ for all } ~~~3 \leq i \leq 9. 
        \end{align*}
\item[\textbf{Case 2:}] \begin{align*}
           \phi_{m'}(h_1) &=u_1,~\phi_{m'}(w_1) = u_1, \\
           \phi_{m'}(h_2) &=u_2,~\phi_{m'}(w_2)= 1,\\
           \phi_{m'}(h_i) & =\phi_{m'}(w_i) = 1~ ~ ~ \text{ for all } ~~~3 \leq i \leq 9. 
        \end{align*}
\item[\textbf{Case 3:}]  \begin{align*}
           \phi_{m'}(h_1) &=u_1,~\phi_{m'}(w_1) = u_1, \\
           \phi_{m'}(h_2) &=1,~\phi_{m'}(w_2)= u_2,\\
           \phi_{m'}(h_i) & =\phi_{m'}(w_i) = 1~ ~ ~ \text{ for all } ~~~3 \leq i \leq 9. 
        \end{align*}
\item[\textbf{Case 4:}]  \begin{align*}
           \phi_{m'}(h_1) &=u_1,~\phi_{m'}(w_1) = 1, \\
           \phi_{m'}(h_2) &=u_2,~\phi_{m'}(w_2)= u_2,\\
           \phi_{m'}(h_i) & =\phi_{m'}(w_i) = 1~ ~ ~ \text{ for all } ~~~3 \leq i \leq 9. 
        \end{align*}
\item[\textbf{Case 5:}]  \begin{align*}
           \phi_{m'}(h_1) &=u_1,~\phi_{m'}(w_1) = 1, \\
           \phi_{m'}(h_2) &=u_2,~\phi_{m'}(w_2)= 1,\\
           \phi_{m'}(h_i) & =\phi_{m'}(w_i) = 1~ ~ ~ \text{ for all } ~~~3 \leq i \leq 9. 
        \end{align*}
\item[\textbf{Case 6:}] \begin{align*}
           \phi_{m'}(h_1) &=u_1,~\phi_{m'}(w_1) = 1, \\
           \phi_{m'}(h_2) &=1,~\phi_{m'}(w_2)= u_2,\\
           \phi_{m'}(h_i) & =\phi_{m'}(w_i) = 1~ ~ ~ \text{ for all } ~~~3 \leq i \leq 9. 
        \end{align*}
\end{enumerate}

We carry out the computation for Case 1 and the computations for the remaining cases are analogous.
From Equation (\ref{expression _of_D_f_as_elemets_of cosets}),
\begin{align*}
    \phi_{m'}(D_f) = \sum_{i=1}^9 \phi_{m'}(g_i)\bigl(1+\phi_{m'}(h_i)x+\phi_{m'}(w_i)x^{2}\bigr)e_i.
\end{align*}
Therefore, in case 1,
\begin{align*}
    \phi_{m'}(D_f) = (1+u_1x+u_1x^2)e_1+\phi_{m'}(g_2)(1+u_2x+u_2x^2)e_2+ \sum_{i=3}^9 \phi_{m'}(g_i)(1+x+x^{2})e_i.
\end{align*}
Applying $\tau_x$, we obtain
\begin{align*}
    \tau_x\bigl(\phi_{m'}(D_fD_f^{(-1)})\bigr)= &\left ((1+u_1\omega+u_1\omega^2)e_1+ \phi_{m'}(g_2)(1+u_2\omega +u_2\omega^2)e_2 \right )\\ &\left (  (1+u_1^{-1}\omega^2+u_1^{-1}\omega)e_1 +  \phi_{m'}(g_2^{-1})(1+u_2^{-1}\omega^2 +u_2^{-1}\omega)e_2^{-1} \right ).
\end{align*}
The coefficient of $e_1$ in $ \tau_x(\phi_{m'}(D_fD_f^{(-1)}))$ is 
\begin{align*}
    (1+u_1w+u_1w^2)(1+u_1^{-1}w^2+u_1^{-1}w)+\phi_{m'}(g_2)\phi_{m'}(g_2^{-1})(1+u_2w+u_2w^2)(1+u_2^{-1}w^2+u_2^{-1}w).
\end{align*}
Simplifying the above equation, coefficient of $e_1$ is
\begin{align}\label{coefficient_of_e_1_1st exp}
    &\bigl(3+(w+w^2)+u_1^{-1}(w+w^2)+u_1(w+w^2)\bigr)+\bigl(3+(w+w^2)+u_2^{-1}(w+w^2)+u_2(w+w^2)\bigr) \nonumber \\
    &=2-u_1+u_1^{-1}+2-u_2-u_2^{-1} \nonumber\\
    &=4-u_1-u_2-u_1^{-1}-u_2^{-1}.
\end{align}
Similarly, the coefficient of $e_2$ in $ \tau_x(\phi_{m'}(D_fD_f^{-1}))$ is 
\begin{align*}
   \phi_{m'}(g_2)(1+u_2\omega +u_2\omega^2)(1+u_1^{-1}\omega^2+u_1^{-1}\omega).
\end{align*}
Simplifying the above equation, coefficient of $e_2$ is
\begin{align} \label{coefficients_of_e_2_1st exp}
    &\phi_{m'}(g_2)(1+u_1^{-1}(\omega+\omega^{2})+u_2(\omega+\omega^{2})+u_2u_1^{-1}(\omega+\omega^{2})+2u_2u_1^{-1}) \nonumber\\
   &=\phi_{m'}(g_2)(1-u_1^{-1}-u_2-u_2u_1^{-1}+2u_2u_1^{-1}) \nonumber\\
   &=\phi_{m'}(g_2)(1-u_1^{-1}-u_2 + u_2u_1^{-1}).
\end{align}
\noindent From Equation (\ref{D_fD_f-1 =_sum_of E_x}), we have $D_fD_f^{(-1)}=\sum_{i=1}^{9} (E_{e_i}+E_{e_ix}x+ E_{e_ix^2}x^2)e_i$. Therefore, 
\begin{align*}
    \tau_x(\phi_{m'}(D_fD_f^{(-1)}))= \sum_{i=1}^{9} (\phi_{m'}(E_{e_i})+\phi_{m'}(E_{e_ix})\omega+ \phi_{m'}(E_{e_ix^2})\omega^{2})e_i.
\end{align*}
Substituting $\omega^{2}$ by $-1-w$, we have
\begin{align}\label{expression_of_tphi_D_fD_f^-1}
    \tau_x(\phi_{m'}(D_fD_f^{(-1)}))= \sum_{i=1}^{9} ((\phi_{m'}(E_{e_i})-\phi_{m'}(E_{e_ix^2}))+(\phi_{m'}(E_{e_ix})- \phi_{m'}(E_{e_ix^2})\omega)e_i.
\end{align}
From the Equation (\ref{expression_of_tphi_D_fD_f^-1}), coefficient of $e_1$ in $\tau_x(\phi_{m'}(D_fD_f^{-1}))$ is
\begin{align} \label{coefficient_of_e_1_2nd exp}
    \phi_{m'}(E_{e_1}-E_{e_1x^2})+\phi_{m'}(E_{e_1x}-E_{e_1x^2})\omega.
\end{align}
From the Equation (\ref{expression_of_tphi_D_fD_f^-1}), coefficient of $e_2$ in $\tau_x(\phi_{m'}(D_fD_f^{-1}))$ is
\begin{align} \label{coefficient_of_e_2_2nd exp}
    \phi_{m'}(E_{e_2}-E_{e_2x^2})+\phi_{m'}(E_{e_2x}-E_{e_2x^2})\omega.
\end{align}
Coefficients of $e_1$ and $e_2$ in $\tau_x(\phi_{m'}(D_fD_f^{-1}))$ belong to $\mathbb{Z}[C_5][\omega]$. As $\mathbb{Z}[C_5][\omega]$ has a basis $\{1,\omega\}$ over $\mathbb{Z}[C_5]$. Therefore, comparing the coefficient of $e_1$ from Equations (\ref{coefficient_of_e_1_1st exp}) and (\ref{coefficient_of_e_1_2nd exp}), we have 
\begin{align*}
\phi_{m'}(E_{e_1}-E_{e_1x^2}) &= 4-u_1-u_1^{-1}-u_2 -u_2^{-1},\\
\phi_{m'}(E_{e_1x}-E_{e_1x^2})& =0.
\end{align*}
Similarly, upon comparing the coefficient of $e_2$ from Equations (\ref{coefficients_of_e_2_1st exp}) and (\ref{coefficient_of_e_2_2nd exp}), we have 
\begin{align*}
\phi_{m'}(E_{e_2}-E_{e_2x^2}) &= \phi_{m'}(g_2)(1-u_1^{-1}-u_2 + u_2u_1^{-1}),\\
\phi_{m'}(E_{e_2x}-E_{e_2x^2})& =0.
\end{align*}
Note that both $\phi_{m'}(E_{e_2}-E_{e_2x^2}) $ and $\phi_{m'}(E_{e_2x}-E_{e_2x^2})$ are not simultaneously zero. Repeating the same argument for the other cases, we have
\begin{align*}
\phi_{m'}(E_{e_1}-E_{e_1x^2}) &= 4-u_1-u_1^{-1}-u_2 -u_2^{-1},\\
\phi_{m'}(E_{e_1x}-E_{e_1x^2})& =0,\\
    \phi_{m'}(E_{e_2}-E_{e_2x^2})& = 
      \begin{cases}
          \phi_{m'}(g_2)(1-u_1^{-1}-u_2+u_2u_1^{-1}),\\
          0,
      \end{cases}\\
     \phi_{m'}(E_{e_2x}-E_{e_2x^2})& =
     \begin{cases}
         \pm \phi_{m'}(g_2)\left ( 1-u_1^{-1}-u_2+u_2u_1^{-1} \right ),\\
         0.
     \end{cases}
\end{align*}
Moreover, in each case, the two quantities $\phi_{m'}(E_{e_2}-E_{e_2x^2}) $ and $\phi_{m'}(E_{e_2x}-E_{e_2x^2})$ do not vanish simultaneously.
\end{proof}
\noindent The following proposition is a direct consequence of the above Proposition \ref{Result_phi_of_diff_of_vanishing_sum}. 
\begin{prop}\label{coff_of_diff_of_vanishing_sum}
Let $\phi_{m'}:C_{m}  \rightarrow C_5$ be the group homomorphism defined in (\ref{phi map}), where $m=5m'$ and $5\nmid m'$. Suppose there exists $x \in G\setminus\{1_G\}$ such that $\phi_{m'}(E_x)=22+D_5=23+u_1+u_1^{-1}+u_2+u_2^{-1}$. Then at least one of $\phi_{m'}(E_{e_2}-E_{e_2x^2})$ or $\phi_{m'}(E_{e_2x}-E_{e_2x^2})$ is nonzero. Moreover, the possible values of nonzero coefficients are equal to $\pm 1$.
\end{prop}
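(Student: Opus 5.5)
The plan is to read off the statement directly from the explicit values computed in Proposition~\ref{Result_phi_of_diff_of_vanishing_sum}. Under the hypothesis $\phi_{m'}(E_x)=22+D_5$, that proposition gives, up to relabeling and after translating $D_f$ so that $g_1=1$, six configurations (Cases 1--6) for $(\phi_{m'}(h_1),\phi_{m'}(w_1),\phi_{m'}(h_2),\phi_{m'}(w_2))$. In each case the pair $\bigl(\phi_{m'}(E_{e_2}-E_{e_2x^2}),\,\phi_{m'}(E_{e_2x}-E_{e_2x^2})\bigr)$ is one of
\[
\bigl(\phi_{m'}(g_2)Z,\,0\bigr),\quad \bigl(0,\,\pm\phi_{m'}(g_2)Z\bigr),\quad \bigl(\phi_{m'}(g_2)Z,\,\pm\phi_{m'}(g_2)Z\bigr),
\qquad Z:=1-u_1^{-1}-u_2+u_2u_1^{-1}.
\]
So I will first redo the $e_2$-coefficient computation of $\tau_x(\phi_{m'}(D_fD_f^{(-1)}))$ case by case, as was done for Case 1. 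For example, in Case 6 the product $(1+\omega^2 u_2)(1+u_1^{-1}\omega^2)$ gives $Z$ times a power of $\omega$, after reducing with $\omega^2=-1-\omega$. This confirms that the $e_2$-coefficient always has the form $\phi_{m'}(g_2)\,Z\,\epsilon$, with $\epsilon\in\{1,\omega,\omega^2\}=\{1,\omega,-1-\omega\}$. Writing it in the basis $\{1,\omega\}$ over $\mathbb{Z}[C_5]$ then gives coordinates in $\{(Z',0),(0,Z'),(-Z',-Z')\}$ with $Z'=\phi_{m'}(g_2)Z$.

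The first claim follows once $Z\neq 0$ in $\mathbb{Z}[C_5]$. I will argue this by noting that $Z=(1-u_1^{-1})(1-u_2)$. Both $u_1$ and $u_2$ are nontrivial in $C_5$, because $D_5=1+u_1+u_1^{-1}+u_2+u_2^{-1}$ forces $\{u_1^{\pm1},u_2^{\pm1}\}=C_5\setminus\{1\}$. Consequently $1,u_1^{-1},u_2,u_2u_1^{-1}$ are pairwise distinct, since $u_2\neq u_1^{-1}$ and $u_2\ne 1$, $u_1\ne 1$. Hence $Z$ is a group-ring element with four distinct supports, and so it is nonzero; multiplying by the group element $\phi_{m'}(g_2)$ preserves this. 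Since $(Z',0)$, $(0,Z')$ and $(-Z',-Z')$ each have a nonzero coordinate, at least one of the two differences is nonzero.

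For the coefficient claim, the same distinctness shows that $\phi_{m'}(g_2)Z$ has exactly four distinct group elements in its support, each with coefficient $\pm1$. Negating it keeps all coefficients in $\{\pm1\}$. So every nonzero value has all nonzero coefficients equal to $\pm1$.

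The main obstacle is the bookkeeping across the six cases. I need to check that the $e_2$-coefficient is always a unit multiple $\epsilon\in\{1,\omega,\omega^2\}$ of $\phi_{m'}(g_2)Z$, and not some other combination of $u_1,u_2$. I also need to confirm that the distinctness of $u_1^{-1}$ and $u_2$ holds under the relabeling, and not merely that $u_2\neq u_1^{\pm1}$ in the generic sense. I would handle this by a uniform computation in which $(1+a\omega+b\omega^2)$ with $\{a,b\}$ drawn from $\{1,u\}$ factors as a power of $\omega$ times $(1+u\omega^{\pm1})$-type terms. Each such product then collapses to $Z$ times a cube root of unity.
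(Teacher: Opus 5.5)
Your route is the paper's: it gives no separate proof and reads the statement off the six-case computation in Proposition~\ref{Result_phi_of_diff_of_vanishing_sum}. Your observation that $Z=(1-u_1^{-1})(1-u_2)$ has four distinct support elements $1,u_1^{-1},u_2,u_1^{-1}u_2$ supplies the justification (nonvanishing and $\pm1$ coefficients) that the paper leaves implicit. For that distinctness you also need $u_2\neq u_1$, so that $u_1^{-1}u_2\neq 1$. This follows from your own remark that $\{u_1^{\pm1},u_2^{\pm1}\}=C_5\setminus\{1\}$.

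There is, however, an error in the step you called the main obstacle: the unit $\epsilon$ is not always a power of $\omega$. Put $A_i=1+\phi_{m'}(h_i)\omega+\phi_{m'}(w_i)\omega^2$. Then $A_i$ equals $1-u$, $-\omega(1-u)$ or $-\omega^2(1-u)$ according as $(\phi_{m'}(h_i),\phi_{m'}(w_i))$ is $(u,u)$, $(u,1)$ or $(1,u)$. The $e_2$-coefficient is $\phi_{m'}(g_2)A_2\overline{A_1}$, where the bar inverts elements of $C_5$ and sends $\omega\mapsto\omega^2$.

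In Cases 1--6 this gives $\epsilon=1,\,-\omega,\,-\omega^2,\,-\omega^2,\,1,\,\omega$. The coordinate pairs are therefore $(Z',0)$, $(0,-Z')$, $(Z',Z')$, $(Z',Z')$, $(Z',0)$, $(0,Z')$. So your set $\{(Z',0),(0,Z'),(-Z',-Z')\}$ is wrong, although the list in your first paragraph, with its $\pm$ signs, does contain the true cases.

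Likewise, each $A_i$ factors as a sixth root of unity times $(1-u)$, not into ``$(1+u\omega^{\pm1})$-type terms''. Your Case 6 product should read $(1+\omega+u_2\omega^2)(1+\omega+u_1^{-1}\omega^2)=\omega Z$.

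The conclusion survives unchanged. Each of $\pm1,\pm\omega,\pm\omega^2$ has coordinates in $\{0,\pm1\}$ with respect to $\{1,\omega\}$, and these coordinates are never both zero. Hence each difference is $0$ or $\pm\phi_{m'}(g_2)Z$, and the two differences never vanish simultaneously.
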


\begin{prop}\label{resul_excluiding_one_form_of_e_x}
    Let $f: \mathbb{Z}_3^3 \rightarrow \mathbb{Z}_{5m'}$ be a GBF, where $5 \nmid m'$. Then, for each $x\in G\setminus\{1_G\}$, $ E_x \neq D_5g^{\alpha_x}+D_{11}(g^{\beta_x}+g^{\gamma_x})$. Moreover, $11\nmid k_x$ for each $x\in G\setminus{1_G}$.
\end{prop}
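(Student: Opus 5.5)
We argue by contradiction. Suppose $f:\mathbb{Z}_3^3\to\mathbb{Z}_{5m'}$ is a GBF and that for some $x\in G\setminus\{1_G\}$
\[
E_x=D_5g^{\alpha_x}+D_{11}\bigl(g^{\beta_x}+g^{\gamma_x}\bigr).
\]
By Lemma~\ref{special_form_of_E_x on action of phi_m}, applying $\phi_{m'}$ gives $\phi_{m'}(E_x)=22+D_5=23+u_1+u_1^{-1}+u_2+u_2^{-1}$. Choose $H=\{e_1,\dots,e_9\}\cong\mathbb{Z}_3^2$ as in the Remark, with $e_1=1_G$, and write $D_f$ as in Equation~(\ref{expression _of_D_f_as_elemets_of cosets}). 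Proposition~\ref{Result_phi_of_diff_of_vanishing_sum} and Proposition~\ref{coff_of_diff_of_vanishing_sum} then yield the following: after relabeling, at least one of
\[
\phi_{m'}(E_{e_2}-E_{e_2x^2}),\qquad \phi_{m'}(E_{e_2x}-E_{e_2x^2})
\]
is a nonzero element of $\mathbb{Z}[C_5]$ whose nonzero coefficients are all $\pm1$. Moreover, $\phi_{m'}(E_{e_1}-E_{e_1x^2})=4-u_1-u_1^{-1}-u_2-u_2^{-1}$.

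The contradiction will come from the possible shapes of every $E_y$ with $y\neq 1_G$, given by Theorem~\ref{possible_form_of_e_x_n_3}. We compute $\phi_{m'}$ of each form, using that $\phi_{m'}(D_q)=q\cdot(\text{an element of } C_5)$ when $q\neq 5$, and $\phi_{m'}(D_5)=D_5$ (here we need that $\phi_{m'}$ maps $D_5$ onto $C_5$, which holds since $5\nmid m'$).
\begin{enumerate}[label=(\roman*)]
\item Form (i) maps to $4D_5+7u$.
\item Form (ii) maps to $D_5+11(u+u')$.
\item Form (iii) maps to $2D_5+17u$.
\item Form (iv) maps to $7(u+u')+13u''$.
\end{enumerate}
Modulo $D_5$, every $\phi_{m'}(E_y)$ is therefore a combination of group elements whose coefficients lie in $\{0,\pm 7,\pm 11,\pm 13,\pm 17,\pm 22,\dots\}$. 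Since the coefficient sum $\|E_y\|=27$ is fixed, each difference $\phi_{m'}(E_y-E_{y'})$ has augmentation $0$.

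We then go through the pairs of forms. For $y=e_2$ or $e_2x$ and $y'=e_2x^2$, we show that a difference $\phi_{m'}(E_y)-\phi_{m'}(E_{y'})$ can equal the specific element $\pm\phi_{m'}(g_2)(1-u_1^{-1}-u_2+u_2u_1^{-1})$ only if it is zero. Because $\mathbb{Z}[C_5]$ has the relation $D_5\mapsto 0$ only after passing to $\mathbb{Z}[\zeta_5]$, we must work directly in $\mathbb{Z}[C_5]$: subtracting two elements of the listed shapes yields coefficients in $\{0,\pm1,\dots\}$ only through cancellations of the form $aD_5$ against multiples $7,11,13,17$. We check that a residual pattern of $\pm1$'s with exactly four nonzero entries (or two, if $u$'s coincide) is impossible. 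Concretely, write $\phi_{m'}(E_y)=aD_5+\sum c_j v_j$ with $a\in\{0,1,2,4\}$ and each $c_j\ge7$. The difference of two such elements has coefficients $(a-a')+(\text{multiples}\ge7 \text{ or } 0)$. For all five coefficients to be in $\{0,\pm1\}$, the large parts must cancel exactly, so the two forms share their non-$D_5$ part and $|a-a'|\le1$. Such a pair of forms does not exist unless $a=a'$, which gives difference $0$. This contradicts Proposition~\ref{coff_of_diff_of_vanishing_sum}. The first assertion then follows.

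For the second assertion, note that by the proof of Theorem~\ref{possible_form_of_e_x_n_3}, $11\mid k_x$ occurs only in form (ii). Form (ii) is excluded, so $11\nmid k_x$ for every $x\ne1_G$.

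The main obstacle is the case analysis of these differences, in particular making sure that every admissible pair of forms is handled. It also requires that the $c$-exponent decomposition used in Theorem~\ref{possible_form_of_e_x_n_3} really pins down $\phi_{m'}(E_y)$ as above even though decompositions are not unique. If this fails, the fallback is to use the $e_1$ identity $4-u_1-u_1^{-1}-u_2-u_2^{-1}$ in the same way: its coefficient $4$ (or the pattern $4,-1,-1,-1,-1$) must again be produced by a difference of two listed shapes, and the argument above shows this forces $a-a'=\pm$ a constant on all five coordinates, which is impossible.
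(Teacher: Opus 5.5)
Your route is the paper's: the lemma giving $\phi_{m'}(E_x)=22+D_5$, then Propositions~\ref{Result_phi_of_diff_of_vanishing_sum} and~\ref{coff_of_diff_of_vanishing_sum}, then a comparison of the $\phi_{m'}$-images of the four shapes in Theorem~\ref{possible_form_of_e_x_n_3}. The step you use to replace the case analysis is false. A difference $\phi_{m'}(E_y)-\phi_{m'}(E_{y'})$ with all coefficients in $\{0,\pm1\}$ need not have its non-$D_5$ parts cancel exactly. Pointwise differences of the "large parts" are not restricted to $0$ or multiples $\geq 7$; for example, $13-14=-1$. Concretely, suppose $E_y$ and $E_{y'}$ both have shape (iv), with $\phi_{m'}(E_y)=14s+13t$ and $\phi_{m'}(E_{y'})=14t+13s$ for $s\neq t$ in $C_5$. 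This happens when $\alpha_y\equiv\beta_y$ modulo $5$ and the roles of $s,t$ are swapped for $y'$. Then the difference is $s-t$, which is nonzero with coefficients $\pm1$. So your conclusion "difference $0$" fails, and the bare output of Proposition~\ref{coff_of_diff_of_vanishing_sum} (nonzero, coefficients $\pm1$) cannot give a contradiction. The paper's own (iv)/(iv) subcase claims a coefficient of absolute value at least $6$, and it misses the same configuration.

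The missing ingredient is the size of the support.
\begin{itemize}
\item The sorted coefficient vectors on $C_5$ are: shape (i) $(4,4,4,4,11)$; shape (ii) $(1,1,1,1,23)$, or $(1,1,1,12,12)$ in your more general version; shape (iii) $(2,2,2,2,19)$; shape (iv) $(0,0,7,7,13)$, $(0,0,0,13,14)$, $(0,0,0,7,20)$ or $(0,0,0,0,27)$.
\item Two images that differ by at most $1$ at every point must have the same sorted type. They then coincide except for the $13/14$ swap above, so every nonzero $\{0,\pm1\}$-difference has support of size exactly $2$.
\item The difference supplied by Proposition~\ref{Result_phi_of_diff_of_vanishing_sum} is $\pm\phi_{m'}(g_2)(1-u_1^{-1})(1-u_2)$. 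Its four group elements $1,u_1^{-1},u_2,u_1^{-1}u_2$ (times $\phi_{m'}(g_2)$) are pairwise distinct, because $u_1,u_2\neq 1$ and $u_2\neq u_1^{\pm1}$.
\end{itemize}
You need to prove this distinctness rather than allow "or two, if $u$'s coincide." A two-term pattern is exactly the case that cannot be excluded.

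Your fallback via $e_1$ does not help either. Since $e_1=1_G$, $\phi_{m'}(E_{e_1})=27$ is not one of the four shapes. Also $E_{x^2}=E_x^{(-1)}$ gives $\phi_{m'}(E_{x^2})=22+D_5$, so $27-\phi_{m'}(E_{x^2})=5-D_5$ holds automatically and yields no contradiction. Your deduction of $11\nmid k_x$ from the exclusion of shape (ii) is fine.
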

\begin{proof}
   Let $\phi_{m'}:C_{5m'}  \rightarrow C_5$ be the group homomorphism defined in (\ref{phi map}). Suppose there exists $x\in G\setminus\{1_G\}$, such that $E_x= D_5g^{\alpha_x}+D_{11}(g^{\beta_x}+g^{\gamma_x})$,  then from Lemma \ref{special_form_of_E_x on action of phi_m} $\phi_{m'}(E_x)= 23+u_1+u_1^{-1}+u_2+u_2^{-1}$. From Proposition \ref{coff_of_diff_of_vanishing_sum}, at least one of $\phi_{m'}(E_{e_2}-E_{e_2x^2})$ or $\phi_{m'}(E_{e_2x}-E_{e_2x^2})$ is nonzero and the nonzero coefficients are equal to $1$ or $-1$.  Without loss of generality, assume that $\phi_{m'}(E_{e_2}-E_{e_2x^2})$ is nonzero.  We consider the following four possible forms of $\phi_{m'}(E_y)$, where $y\in G\setminus\{1_G\}$.
    \begin{enumerate}[label=(\roman*)]
       \item $\phi_{m'}(E_y) =4D_5+7g_5^{\omega_y}$,
       \item  $\phi_{m'}(E_y)=D_5+22$,
       \item  $\phi_{m'}(E_y)=2D_5+17g_5^{t_y}$,
       \item  $\phi_{m'}(E_y)=7(g_5^{\alpha_y}+g_5^{\beta_y})+13g_5^{\gamma_y}$.
    \end{enumerate}
    We compute $\phi_{m'}(E_{e_2}-E_{e_2x^2})$ in all possible cases and show that a coefficient with absolute value greater than $1$ always appears, which yields a contradiction. Firstly, we will consider the cases in which $\phi_{m'}(E_{e_2})$ and $\phi_{m'}(E_{e_2x^2})$ are of different type.

    \medskip
\noindent \textbf{Case 1:} $\phi_{m'}(E_{e_2})$ and $\phi_{m'}(E_{e_2x^2})$ are of type $(i)$ and $(ii)$.\\
Consider the difference
\begin{align*}
    \pm[22-3D_5-7g_5^{\omega_y}].
\end{align*}
In this difference, there exists an element with absolute value of coefficient $3$. Which is not possible.

\medskip
\noindent\textbf{Case 2:} $\phi_{m'}(E_{e_2})$ and $\phi_{m'}(E_{e_2x^2})$ are of type $(i)$ and $(iii)$.
Consider the difference
\begin{align*}
    \pm[2D_5+7g_5^{\omega_y}-17g_5^{\gamma_y}].
\end{align*}
In this difference, there exists an element with absolute value of coefficient $ 2$. Which is not possible.\\
\textbf{case 3:}  $\phi_{m'}(E_{e_2})$ and $\phi_{m'}(E_{e_2x^2})$ are of type $(i)$ and $(iv)$.
Consider the difference
\begin{align*}
    \pm [4D_5+7g_5^{\omega_y}-7(g_5^{\alpha_y}+g_5^{\beta_y})-13g_5^{\gamma_y} ].
\end{align*}
\begin{enumerate}[label=(\roman*)]
    \item $\gamma_y=\omega_y=\alpha_y=\beta_y$.\\
 There exists an element with absolute value of coefficient $16$. Which is not possible.
\item $\gamma_y=\omega_y$, $\gamma_y \neq \alpha_y$ and $\gamma_y=\beta_y$.\\
The absolute value of coefficient $g_5^{\gamma_y}$ is $ 9$. This will contradict Proposition \ref{coff_of_diff_of_vanishing_sum}. Therefore, this is not possible.
\item $\gamma_y=\omega_y$, $\gamma_y \neq \alpha_y$ and $\gamma_y \neq \beta_y$.\\
The absolute value of coefficient $g_5^{\gamma_y}$ is $2$, which is not possible.
\item $\gamma_y \neq \omega_y$.\\
The absolute value of coefficient of $g_5^{\gamma_y}$ is greater than $9$, which is not possible.
\end{enumerate}

\noindent\textbf{Case 4:} $\phi_{m'}(E_{e_2})$ and $\phi_{m'}(E_{e_2x^2})$ are of type $(ii)$ and $(iii)$.\\
Consider the difference
\begin{align*}
    \pm [D_5+17g_5^{\gamma_y}-22].
\end{align*}
If $\gamma_y \equiv 0 \mod{5}$, then the coefficient of $1$ is $\pm 4$. Otherwise, the coefficient of $g_5^{\gamma_y}$ is $\pm 18$. Both are impossible.

\medskip
\noindent \textbf{Case 5:} $\phi_{m'}(E_{e_2})$ and $\phi_{m'}(E_{e_2x^2})$ are of type $(ii)$ and $(iv)$.\\
Consider the difference
\begin{align*}
    \pm[D_5+22-7g_5^{\alpha_x}-7g_5^{\beta_x}-13g_5^{\gamma_x}].
\end{align*}
\begin{enumerate}[label=(\roman*)]
    \item $\gamma_x=\alpha_x=\beta_x$.\\
    If $\gamma_y\equiv 0 \mod{p}$, then the coefficient of $1$ is $\pm 4$, which is not possible.
    Otherwise, the coefficient of $1$ is $23$, which is not possible.
    \item If atleast two of $\alpha_y,~ \beta_y$ and $\gamma_y$ are not equal.
    Then there exists an element with absolute value of coefficient $\geq 6$, which is not possible. 
\end{enumerate}

\medskip
\noindent \textbf{Case 6:} $\phi_{m'}(E_{e_2})$ and $\phi_{m'}(E_{e_2x^2})$ are of type $(iii)$ and $(iv)$.\\
Consider the difference set
\begin{align}
    \pm [2D_5+17g_5^{t_y}-7g_5^{\alpha_y}-7g_5^{\beta_y}-13 g_5^{\gamma_y}].
\end{align}
\begin{enumerate}[label=(\roman*)]
    \item $t_y = \gamma_y=\alpha_y =\beta_y$. \\ 
    In this case, the coefficient of $g_5^{\gamma_y}$ is either $8$ or $-8$. Which is not possible.
    \item $t_y=\gamma_y$, $\gamma_y \neq \alpha_y$, and $\gamma_y =\beta_y$. \\
    In this case, the coefficient of $g_5^{\alpha_y}$ is either   $ 5$ or $-5$. Which is not possible.
    \item $t_y=\gamma_y$, $\gamma_y \neq \alpha_y$, and $\gamma_y \neq \beta_y$.\\
    In this case, coefficient of $g_5^{\gamma_y}$ will be $\pm 6$, which is not possible.
    \item ${t_y\neq \gamma_y}$.\\
    In this case, then there exists an element with absolute value of coefficient $\geq 5$. This is not possible.
\end{enumerate}
Now, we will look at the case when both $\phi_{m'}(E_{e_2})$ and $\phi_{m'}(E_{e_2x^2})$ are of same type and $\phi_{m'}(E_{e_2})-\phi_{m'}(E_{e_2x^2})\neq 0$.\\
If  $\phi_{m'}(E_{e_2})$ and $\phi_{m'}(E_{e_2x^2})$  both are of type $(i)$, then the difference is either $0$ or there exists an element in the difference whose coefficient has absolute value $7$, which is not possible. If $\phi_{m'}(E_{e_2})$ and $\phi_{m'}(E_{e_2x^2})$  both are of type $(ii)$, then there difference will be $0$. Now, consider the case, when both are of type $(iii)$, then the difference is either $0$ or there exists an element with coefficient $17$. 
Now, consider $\phi_{m'}(E_{e_2})$ and $\phi_{m'}(E_{e_2x^2})$ are of type $(iv)$, then the difference is either $0$ or \[\pm[7(g_5^{\alpha_y}+g_5^{\beta_y}-g_5^{\alpha_y'}-g_5^{\beta_y'})+13( g_5^{\gamma_y}- g_5^{\gamma_y'})].\]
Then there exists a coefficient with absolute value greater than equals to $6$. Hence this case will not occur.\\
In the case where the difference is zero, we instead consider
\[
\phi_{m'}(E_{e_2x}) - \phi_{m'}(E_{e_2x^2}),
\]
which is nonzero. By the same analysis, this difference also contains a coefficient whose absolute value exceeds $1$. Hence the result follows.
\end{proof}
\begin{corollary} \label{non exist_for_n=3_m_odd}
    There does not exist a GBF $f :\mathbb{Z}_3^3 \rightarrow \mathbb{Z}_{5\cdot 11^r}$. 
\end{corollary}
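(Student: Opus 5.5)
Let $m=5\cdot 11^r$ and suppose, for contradiction, that $f:\mathbb{Z}_3^3\to\mathbb{Z}_m$ is a GBF. The plan is to combine Proposition~\ref{resul_excluiding_one_form_of_e_x} with Proposition~\ref{existence_of_the_gbf_to_lower_order}. The aim is to peel off all factors of $11$ one at a time, reducing to a GBF into $\mathbb{Z}_5$, and then rule that out by Theorem~\ref{Nonexistence_of _gbf_for_odd_m}.

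\textbf{Small cases.} If $r=0$, then $m=5$ is a prime power, i.e.\ $t=1$ in the notation of Theorem~\ref{Nonexistence_of _gbf_for_odd_m}, so no GBF exists. For $r\ge 1$, the only primes dividing $m$ are $5$ and $11$.

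\textbf{The reduction step.} Take $r\geq 1$ and write $m=5m'$ with $m'=11^r$, so $5\nmid m'$. By Proposition~\ref{resul_excluiding_one_form_of_e_x}, $11\nmid k_x$ for every $x\in G\setminus\{1_G\}$, where $k_x$ is the $c$-exponent of $E_x$. Lemma~\ref{C_expo_is_square_free} says each $k_x$ is a squarefree product of primes from $\{5,11\}$. Proposition~\ref{existence_of_the_gbf_to_lower_order} with $p=11$ then yields a GBF $\mathbb{Z}_3^3\to\mathbb{Z}_{5\cdot 11^{r-1}}$.

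\textbf{Induction and contradiction.} Iterating the reduction step $r$ times (formally, induction on $r$) produces a GBF $\mathbb{Z}_3^3\to\mathbb{Z}_5$. This contradicts Case~1 of Theorem~\ref{Nonexistence_of _gbf_for_odd_m}. As a cross-check, each $E_x$ would equal $D_5Y$ with $5\mid 27$, which is impossible.

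\textbf{Main obstacle.} The real work sits in Proposition~\ref{resul_excluiding_one_form_of_e_x}, and in confirming that it applies uniformly for every $r\ge1$. It only requires $m=5m'$ with $5\nmid m'$, and that holds at each stage. I would also double-check that the list in Theorem~\ref{possible_form_of_e_x_n_3} restricted to primes $\{5,11\}$ leaves only form (ii). That form is exactly what Proposition~\ref{resul_excluiding_one_form_of_e_x} excludes, which gives the contradiction directly, even before descending.
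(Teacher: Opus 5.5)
Your proof is correct, but its main line differs from the paper's. The paper argues directly: each $k_x$ is a squarefree divisor of $5\cdot 11$. So the case analysis behind Theorem~\ref{possible_form_of_e_x_n_3} leaves only form (ii), $E_x=D_5g^{\alpha_x}+D_{11}(g^{\beta_x}+g^{\gamma_x})$, and the first clause of Proposition~\ref{resul_excluiding_one_form_of_e_x} excludes it. That is exactly the remark in your last paragraph.

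Your descent through Proposition~\ref{existence_of_the_gbf_to_lower_order} is sound. You check that $11\mid m$ and that $5\nmid m'$ survives each step. The reduction is just $f \bmod m/11$: a minimal $v$-sum whose reduced exponent is prime to $11$ still vanishes under $g\mapsto\zeta_m^{11}$, by Galois conjugation.

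The descent is, however, redundant. At the first step, $11\nmid k_x$ together with $k_x>1$ squarefree with primes in $\{5,11\}$ already forces $k_x=5$. Then Proposition~\ref{Discription_of_v-sum}(ii) gives $E_x=D_5Y$ and $27=5\lVert Y\rVert$. This is the same contradiction you reach in $\mathbb{Z}_5$, with no induction.

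Your route also relies on two things the paper's route avoids:
\begin{itemize}
\item the ``Moreover'' clause $11\nmid k_x$, which the paper asserts without separate proof (it follows from the first clause, since $11\mid k_x$ forces $k_x=55$ and hence form (ii));
\item a reduction proposition whose proof is only cited.
\end{itemize}

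In exchange, your explicit handling of $r=0$ is cleaner than the paper's one-line proof, which passes over that case. There none of the forms (i)--(iv) is available at all, and that is what gives the contradiction.
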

\begin{proof}
    Suppose that there exists a GBF from $:\mathbb{Z}_3^3 \rightarrow \mathbb{Z}_{5\cdot 11^r}$. Since the $c$-exponent of $E_x$ divides $5\cdot 11^r$, it follows from Theorem \ref{possible_form_of_e_x_n_3} that $E_x = D_5 g^{\alpha_x} + D_{11}\bigl(g^{\beta_x} + g^{\gamma_x}\bigr)$ for each $x\in G$. However, this contradicts Proposition \ref{resul_excluiding_one_form_of_e_x}. Therefore, there does not exist a $f :\mathbb{Z}_3^3 \rightarrow \mathbb{Z}_{5\cdot 11^r}$. 
\end{proof}

When we apply the homomorphism $\phi_{m'}$ to the other possible forms of $E_x$ arising from a GBF $ f:\mathbb{Z}_3^3 \to \mathbb{Z}_m$, we do not obtain a simplified expression similar to the one obtained in Lemma \ref{special_form_of_E_x on action of phi_m} for $ E_x=D_5g^{\alpha_x}+D_{11}\bigl(g^{\beta_x}+g^{\gamma_x}\bigr)$.
Consequently, it becomes difficult to analyze the remaining possible forms of $E_x$. Using this method, we are only able to exclude $E_x=D_5g^{\alpha_x}+D_{11}\bigl(g^{\beta_x}+g^{\gamma_x}\bigr)$ whenever there exists a GBF $ f:\mathbb{Z}_3^3 \to \mathbb{Z}_m$.

%%%%%%%%%%%%%%%%%%%%%%%%%%%%%%%%%%%%%%%%%%%%%%%%%%%%%%%%%%%%%%%%%%%%%%%%%%%%%%%%%%%%%%%%%%%%%%%%%%%%%%%%%%%%%%%%%%%%%%%%%%%%%%%%%%%%%%%%%%%%%%%%%%%%%%%%%%%%%%%%%%%%%%%%%%%%
\section{Nonexistence results when even $m$ not divisible by $3$} 
This section addresses the remaining case when $m$ is even and not divisible by $3$. We show that no GBF exists when $m=2^k$, and then consider the case $m=2m'$ with $m'$ is odd and not divisible by $3$.
%%%%%%%%%%%%%%%%%%%%%%%%%%%%%%%%%%%%%%%%%%%%%%%%%%%%%%%
\subsection*{Nonexistence result when $m=2^k$}
Suppose that there exists a GBF $f : \mathbb{Z}_3^n \rightarrow \mathbb{Z}_m$, where $m = 2^k$. Then, for each $x \in G \setminus \{1_G\}$, the element $E_x \in \mathbb{N}[C_m]$ is a $v$-sum. Since $2$ is the only prime divisor of $m$, it follows that the $c$-exponent of $E_x$ is equal to $2$. By Proposition~\ref{Discription_of_v-sum}$(ii)$, we have
\[
E_x = D_2 Y,
\]
where $Y \in \mathbb{N}[C_m]$. Consequently, $ 3^n = \lVert E_x \rVert = 2 \lVert Y \rVert$, which is a contradiction since $3^n$ is odd. This gives us the following result.
\begin{theorem} \label{NO_GBF_for_2^k}
There does not exist a GBF  $ f: \mathbb{Z}_3^n \rightarrow \mathbb{Z}_{2^k}$ for any positive integer $n$.
\end{theorem}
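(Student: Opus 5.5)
The plan is a parity argument on the autocorrelation elements, essentially the one sketched just before the statement; I would arrange it as follows.

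First, assume for contradiction that $f:\mathbb{Z}_3^n\to\mathbb{Z}_{2^k}$ is a GBF, and put $G=\mathbb{Z}_3^n$ and $m=2^k$. Since $|G|\geq 3$, there is some $x\in G\setminus\{1_G\}$.

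By Lemma~\ref{E_x_is_a_vanishing_sum}, $\tau(E_x)=0$ for every character $\tau$ of $C_m$ of order $m$. Take $\tau$ with $\tau(g)=\zeta_m$. This shows that $E_x=\sum_{y\in G} g^{f(x+y)-f(y)}\in\mathbb{N}[C_m]$ is a $v$-sum. It has length $\lVert E_x\rVert=|G|=3^n$, because it has exactly one term for each $y\in G$.

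Next, let $k_x$ be the $c$-exponent of $E_x$. By Lemma~\ref{C_expo_is_square_free}, $k_x$ is a product of distinct prime divisors of $m=2^k$. The product cannot be empty: a minimal $v$-sum with reduced exponent $1$ is a positive multiple of a single root of unity, and such a sum cannot vanish. Hence $k_x=2$.

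Proposition~\ref{Discription_of_v-sum}(ii) then gives $E_x=D_2Y$ for some $Y\in\mathbb{N}[C_m]$. Because all coefficients are nonnegative, the norm is multiplicative here, so
\[
3^n=\lVert E_x\rVert=\lVert D_2\rVert\,\lVert Y\rVert=2\lVert Y\rVert .
\]
This is impossible, since $3^n$ is odd.

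The only step needing some care is the claim $k_x\neq 1$. It can also be bypassed: every minimal vanishing sum of $2^k$-th roots of unity has the form $\eta(1+(-1))$, by Proposition~\ref{Discription_of_v-sum}(v) applied with $p=2$. So every $v$-sum in $\mathbb{N}[C_{2^k}]$ decomposes into pairs $\{\eta,-\eta\}$, and its length is therefore even. That already contradicts $\lVert E_x\rVert=3^n$.
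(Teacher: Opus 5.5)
Your proof is correct and is essentially the paper's argument: $E_x$ is a $v$-sum of length $3^n$ whose $c$-exponent must be $2$, so Proposition~\ref{Discription_of_v-sum}(ii) gives $E_x=D_2Y$ and hence $3^n=2\lVert Y\rVert$, contradicting oddness. Your checks that some $x\neq 1_G$ exists and that the $c$-exponent cannot be $1$ fill in points the paper leaves implicit; the route via (v) does not actually avoid the latter, since (v) also needs the reduced exponent to equal $2$, but that fact is trivial either way.
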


\subsection*{Nonexistence results when $m=2m'$ with $m'$ is odd and not divisible by 3}
\begin{lema} \label{expression of E_x}
    Let $f~:~ \mathbb{Z}_3^n \rightarrow \mathbb{Z}_{2m'}$ be a GBF, where $m' = \prod_{j=1}^tp_j^{\alpha_j}$, with primes satisfying $5 \leq p_1 < p_2 < \cdots < p_t$ and $\alpha_j$ are positive integers. Then $t \geq 1$ and $3^n \geq p_1+2$. Moreover, If $n=2$, then $E_x$ is one of the following forms:
    \begin{enumerate}[label=(\roman*)]
        \item $D_2(h_1+h_2)+D_5h_3$,
        \item $D_2(h_1)+D_7h_2$,
    \end{enumerate}
    where $h_i \in C_{2m'}$.
\end{lema}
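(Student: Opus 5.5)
The plan is to mimic the proof of Theorem~\ref{Nonexistence_of _gbf_for_odd_m}, with the prime $2$ now added to the possible primes in the $c$-exponent. Fix a GBF $f:\mathbb{Z}_3^n\to\mathbb{Z}_{2m'}$ and $x\in G\setminus\{1_G\}$. By Lemma~\ref{E_x_is_a_vanishing_sum}, $E_x$ is a $v$-sum in $\mathbb{N}[C_{2m'}]$ with $\lVert E_x\rVert=3^n$. If $t=0$, then $m=2$ and Theorem~\ref{NO_GBF_for_2^k} gives a contradiction, so $t\ge 1$.

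Decompose $E_x=\sum_{i=1}^w X_i$ into minimal $v$-sums with reduced exponents $k_i$, each squarefree by Lemma~\ref{C_expo_is_square_free}.
\begin{itemize}
\item If some $k_i$ has at least two prime factors, Proposition~\ref{Discription_of_v-sum}(vi) forces at least three. Then $\lVert X_i\rVert\ge (q_1-1)(q_2-1)+q_3-1$ with $q_1\ge 2$ and $q_2,q_3\ge 5$, which already gives at least $q_2+q_3-2\ge p_1+2$.
\item Otherwise every $X_i=D_{q_i}Y_i$ with $Y_i\in C_{2m'}$ by Proposition~\ref{Discription_of_v-sum}(v). Since $3^n$ is odd, not all $q_i$ equal $2$. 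Since no $p_j$ divides $3^n$, not all $q_i$ equal the same odd prime. Hence some $q_i\ge p_1$ occurs together with at least one other summand of length at least $2$, giving $3^n\ge p_1+2$.
\end{itemize}

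For $n=2$ we have $\lVert E_x\rVert=9$ and $|\operatorname{supp}(E_x)|\le 9$.

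First, no minimal summand can have reduced exponent with three or more prime factors. If $2\mid k_i$, the bound is $(2-1)(5-1)+7-1=10>9$. If $2\nmid k_i$, the bound is $4\cdot 6+10=34>9$. Either way this contradicts $\lVert X_i\rVert\le 9$.

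So every summand has the form $D_qh$ with $q\in\{2,5,7\}$, because $11>9$. We need $9=2a+5b+7c$ with $b+c\ge 1$, since $9$ is odd. Parity forces $b+c$ odd, and the total length $9$ forces $b+c\le 1$, so $b+c=1$.
\begin{itemize}
\item $b=1$ gives $2a=4$, so $a=2$. This is form (i): $E_x=D_2(h_1+h_2)+D_5h_3$.
\item $c=1$ gives $2a=2$, so $a=1$. This is form (ii): $E_x=D_2h_1+D_7h_2$.
\end{itemize}

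The main obstacle is the first step. The reduced-exponent bookkeeping for a non-minimal $v$-sum depends on the chosen decomposition. However, I work with minimal summands throughout and only use the length bounds (v) and (vi) of Proposition~\ref{Discription_of_v-sum}, so the choice of decomposition should not matter. The argument also needs the case $2\mid k_i$ with three primes to be handled by (vi) with $p_1=2$. I expect this to be fine, since (vi) is stated for arbitrary primes.
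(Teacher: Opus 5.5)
Your proof is correct and follows essentially the same route as the paper. Like the paper, you decompose $E_x$ into minimal $v$-sums, rule out reduced exponents with several prime factors using the length bounds of Proposition~\ref{Discription_of_v-sum}, and for $n=2$ count the lengths of the remaining $D_q h$ summands. The differences are cosmetic: you apply part (vi) uniformly where the paper uses part (i) for four or more primes, and you solve $9=2a+5b+7c$ directly rather than splitting into cases on the number $w$ of summands.
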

\begin{proof}
    Assume that $f$ is a GBF. Then $E_x$ is a $v$-sum for every $x \in \mathbb{Z}_3^n\setminus \{0\}$. If $t=0$, then by Theorem~\ref{NO_GBF_for_2^k}, there does not exist a GBF $f : \mathbb{Z}_3^n \rightarrow \mathbb{Z}_2$. Hence, we may assume that $t \geq 1$.
    We can decompose $E_x$ as $\sum_{i=1}^wX_i$, where each $X_i$ is a minimal $v$-sum with reduced exponent $k_i$. Now, we consider the following cases.

    \medskip
    \noindent \textbf{Case 1:} $|\mathcal{P}(k_i)| \geq 4$ for some $1 \leq i \leq w$.\\ From Proposition \ref{Discription_of_v-sum} ($i$), we have 
    \begin{align*}
       \lVert X_i \rVert &\geq 2+(2-2)+(p_1-2)+ (p_2-2)+(p_3-2)\\
        &  \geq 2p_1+p_2 \geq p_1+2.
    \end{align*}
    Thus, $3^n \geq 2p_1+p_2$. For $n=2$, this gives $9 \geq 2p_1+p_2 \geq  10$, which is impossible. Hence, this case does not appear for $n=2$.\\
    Now, assume that $|\mathcal{P}(k_i)| \leq 3$. By Proposition \ref{Discription_of_v-sum} $(vi)$, it follows that $|\mathcal{P}(k_i)|$ is either $1$ or $3$. 

    \medskip
    \noindent \textbf{Case 2:} $|\mathcal{P}(k_i)|=3$ for some $1\leq i \leq w$.\\
    From Proposition \ref{Discription_of_v-sum} $(vi)$, we have
    \begin{align*}
        \lVert X_i \rVert &\geq (2-1)(p_1-1)+(p_2-1)\\
        & \geq (p_1-1) +(p_2-1)~ \geq ~ 2p_1 ~\geq~ p_1+2.
    \end{align*}
    Hence, $3^n \geq 2p_1$. For $n=2$, this implies $9 \geq 2p_1 \geq 10$, which is impossible. Therefore, this case does not occur when $n=2$.

\medskip
\noindent \textbf{Case 3:} $\mathcal{P}(k_i)=1$ for all $1\leq i\leq w$.\\
By Proposition \ref{Discription_of_v-sum} ($v$), we have $X_i = D_{q_i}Y_i$, where $Y_i \in C_{2m'}$. Thus, $E_{x}=\sum_{i=1}^wD_{q_i}Y_i$. If all $q_i$ are equal, then $E_x=D_{q_1}Y$, for some $Y\in \mathbb{N}[C_{2m'}]$, which implies $3^n=q_1\lVert Y \rVert$. Which is impossible since $q_1$ does not divide $3^n$.  Without loss of generality, we may assume that $q_1 \neq q_2$. Thus, $E_x =D_{q_1}Y_1+D_{q_2}Y_2+\sum_{i=3}^wD_{q_i}Y_i$, therefore $3^n=\lVert E_x \rVert \geq q_1+q_2+(w-2)2$.

\medskip
\noindent \textbf{Case 3.1:}  $w > 3$. \\
Then
\begin{align*}
    3^n \geq q_1+q_2+4 \geq p_1+6 \geq p_1+2.
\end{align*}
In this case, $3^n \geq p_1+6$. For $n=2$, this gives $9 \geq 10$, which is a contradiction. Hence, this case does not occur when $n=2$.

\medskip
\noindent \textbf{Case 3.2:} $w=3$. \\
Then
\begin{align*}
    3^n \geq q_1+q_2+2 \geq p_1+4 \geq p_1+2.
\end{align*}
In this case, $E_x=D_{q_1}Y_1+D_{q_2}Y_2+D_{q_3}Y_3$. Therefore, if $n=2$, then $9=q_1+q_2+q_3$. Since $q_1$ and $q_2$ are distinct primes, therefore the only possibility for $(q_1,q_2,q_3) ~\text{is}~(2,5,2)$. It follows that for $n=2$ we have $E_x=D_2(Y_1+Y_3)+D_5Y_2$.

\medskip
\noindent \textbf{Case 3.3:}  $w=2$.\\ 
Then
\[
3^n \geq q_1+q_2 \geq p_1+2.
\] In this case, $E_x = D_{q_1}Y_1+D_{q_2}Y_2$. Therefore, if $n=2$, then  $9=q_1+q_2$. Since $q_1$ and $q_2$ are distinct primes, therefore the only possibility for $(q_1,q_2)$ is $(2,7)$. It follows that for $n=2$, we have $E_x=D_2Y_1+D_7Y_2$. 
This completes the proof.
\end{proof}
\begin{corollary} \label{non_exist_n=1_mis even}
    There does not exist a GBF $f:\mathbb{Z}_3 \to \mathbb{Z}_{2m'}$, where $m'$ is an odd positive integer not divisible by 3.  
\end{corollary}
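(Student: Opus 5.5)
We argue by contradiction, using the inequality in Lemma~\ref{expression of E_x} directly. Suppose $f:\mathbb{Z}_3\to\mathbb{Z}_{2m'}$ is a GBF, with $m'$ odd and $3\nmid m'$. If $m'=1$, Theorem~\ref{NO_GBF_for_2^k} (with $k=1$) already excludes a GBF. So we may write $m'=\prod_{j=1}^t p_j^{\alpha_j}$ with $t\ge 1$ and $5\le p_1<\cdots<p_t$. This puts us in the setting of Lemma~\ref{expression of E_x} with $n=1$.

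Lemma~\ref{expression of E_x} then yields $3^n\ge p_1+2$. For $n=1$ this reads $3\ge p_1+2\ge 7$, which is a contradiction, and the corollary follows at once.

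The only step that needs care is confirming that the bound $3^n\ge p_1+2$ in Lemma~\ref{expression of E_x} is proved for all $n$, and not only for $n=2$. To check this, we go through the cases of its proof with $\lVert E_x\rVert=3$.

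\begin{itemize}
\item If some minimal constituent has $|\mathcal{P}(k_i)|\ge 3$, then Proposition~\ref{Discription_of_v-sum}(i) or (vi) gives $\lVert X_i\rVert\ge p_1+2\ge 7>3$.
\item Otherwise every constituent has the form $D_{q_i}Y_i$ with $q_i$ prime.
 \begin{itemize}
 \item If all $q_i$ are equal, then $q_1\mid 3$. This forces $q_1=3$, which is impossible since $3\nmid 2m'$.
 \item If at least two of the $q_i$ are distinct, one of them is odd, hence at least $5$. Then $\lVert E_x\rVert\ge 2+5>3$.
 \end{itemize}
\end{itemize}

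Every case is therefore impossible, so no such $f$ exists.

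Alternatively, we can argue directly with $|G|=3$. Each $E_x$ for $x\neq 0$ is a $v$-sum of length $3$ in $\mathbb{N}[C_{2m'}]$. Since $3\nmid 2m'$, the possible $c$-exponents involve only primes from $\{2\}\cup\mathcal{P}(m')$, and none of the corresponding minimal-sum shapes can total $3$. I expect no real obstacle here; the only point to watch is the parity and size bookkeeping above.
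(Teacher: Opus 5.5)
Your proposal is correct and follows the paper's proof. The paper likewise just invokes the bound $3^n\ge p_1+2$ from Lemma~\ref{expression of E_x} with $n=1$ and observes that $3\ge p_1+2\ge 7$ is impossible. Your separate handling of $m'=1$ via Theorem~\ref{NO_GBF_for_2^k}, and your re-check of the lemma's cases with $\lVert E_x\rVert=3$, are sound and only make explicit what the paper leaves implicit.
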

\begin{proof}
By Lemma \ref{expression of E_x}, if there exists a GBF $f: ~\mathbb{Z}_3 \rightarrow \mathbb{Z}_{2m'}$ then $3 \geq p_1+2$. However, since $p_1 \geq 5$, this inequality cannot hold.  Therefore, there does not exists a GBF from $\mathbb{Z}_3$ to $\mathbb{Z}_{2m'}$.
\end{proof}
Throughout the remainder of this section, we fix $G = \mathbb{Z}_3^2$. Let $f : G \rightarrow \mathbb{Z}_{2m'}$ be a function, and define
\[
G_f := \{ x \in G \mid f(x) \text{ is odd} \}.
\]

If $G_f=\emptyset$ and there exists a GBF $f:\mathbb{Z}_3^n \rightarrow \mathbb{Z}_{2m'}$, exists a GBF $f : \mathbb{Z}_3^2 \rightarrow \mathbb{Z}_{2m'}$, then the function $g : \mathbb{Z}_3^2 \rightarrow \mathbb{Z}_{m'}$ defined by
$ g(x) = \frac{f(x)}{2}$
is also a GBF because $\zeta_{2m'}^{f(x)}$ is $m'^{th}$ root of unity.
\begin{lema} \label{no_gbf_for_empty_G_f}
    Let $G_f = \emptyset$, then there does not exist a GBF $ f:\mathbb{Z}_3^2  \rightarrow \mathbb{Z}_{2m'}$.
\end{lema}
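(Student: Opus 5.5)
The plan is to reduce to the odd modulus case and then invoke Corollary~\ref{non_exist_for_n_1_2_and_m_odd}. Suppose, for contradiction, that $f:\mathbb{Z}_3^2\rightarrow \mathbb{Z}_{2m'}$ is a GBF with $G_f=\emptyset$. Then every value $f(x)$ is even, so $g(x)=f(x)/2$ is a well-defined function $g:\mathbb{Z}_3^2\rightarrow \mathbb{Z}_{m'}$. The first step is to check carefully that this halving is well defined modulo $m'$. If $f(x)\equiv f'(x) \pmod{2m'}$ and both are even, then $f(x)/2\equiv f'(x)/2 \pmod{m'}$.

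The second step is the root of unity identity. Choose $\zeta_{m'}=\zeta_{2m'}^2$, which is a primitive $m'$-th root of unity. Then $\zeta_{2m'}^{f(x)}=\zeta_{m'}^{g(x)}$ for every $x$. Hence for every $\chi\in\widehat{G}$,
\[
\sum_{x\in G}\zeta_{m'}^{g(x)}\chi(x)=\sum_{x\in G}\zeta_{2m'}^{f(x)}\chi(x),
\]
and the squared modulus of this sum is $|G|=9$. So $g$ satisfies the defining condition \eqref{fourier_transformation} and is a GBF from $\mathbb{Z}_3^2$ to $\mathbb{Z}_{m'}$.

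The GBF definition fixes $\zeta_m$ as a primitive $m$-th root of unity. Replacing it by any other primitive root is a Galois conjugation, and this preserves the bentness condition through the group ring form $B_gB_g^{(-1)}=|G|$ of Proposition~\ref{equiv_def_of_gbf}, since that equation has integer coefficients. So the choice $\zeta_{2m'}^2$ is harmless.

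Finally, $m'$ is odd and not divisible by $3$. If $m'=1$, then $\mathbb{Z}_1$ is trivial, and the sum $\sum_x\chi(x)$ equals $0$ for every nonprincipal $\chi$, which is not $9$, so this case fails directly. If $m'>1$, Corollary~\ref{non_exist_for_n_1_2_and_m_odd} with $n=2$ says no GBF $\mathbb{Z}_3^2\rightarrow\mathbb{Z}_{m'}$ exists, a contradiction.

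None of these steps is a real obstacle. The only points that need care are the well-definedness of $f/2$ and the trivial case $m'=1$.
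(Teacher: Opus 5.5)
Your proposal is correct and is the paper's argument. Since every $f(x)$ is even, $g=f/2$ is a GBF $\mathbb{Z}_3^2\to\mathbb{Z}_{m'}$ because $\zeta_{2m'}^{f(x)}=\zeta_{m'}^{g(x)}$, and this contradicts the odd-modulus nonexistence result. The paper cites the general theorem directly, while you use its $n=2$ corollary; your checks on the well-definedness of $f/2$, the choice of primitive root, and the degenerate case $m'=1$ are details the paper leaves implicit.
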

\begin{proof}
Suppose there exists a GBF $f : \mathbb{Z}_3^2 \rightarrow \mathbb{Z}_{2m'}$ with $G_f = \emptyset$. Then $f(x)$ is even for all $x \in G$, and hence the function $g(x) = \frac{f(x)}{2}$ defines a GBF from $\mathbb{Z}_3^2$ to $\mathbb{Z}_{m'}$. However, by Theorem~\ref{Nonexistence_of _gbf_for_odd_m}, no such GBF exists. This is a contradiction. Hence, no GBF from $\mathbb{Z}_3^2$ to $\mathbb{Z}_{2m'}$ exists.
\end{proof}

Define a ring homomorphism $\phi: \mathbb{Z}[G \cdot C_{2m'}] \rightarrow \mathbb{Z}[G]$ by
\begin{align*}
    \phi(x) &= x \quad \text{for all } x \in G, \\
    \phi(g) &= -1.
\end{align*}
Then
\begin{align*}
    \phi(D_f) = \phi \left (\sum_{x\in G}g^{f(x)}x\right )= G-2G_f.
\end{align*}
Consequently,
\begin{align*}
    \phi(D_f D_f^{(-1)}) 
    &= \phi(D_f)\,\phi(D_f^{(-1)}) \\
    &= (G - 2G_f)(G - 2G_f^{(-1)}) \\
    &= G^2 - 2G G_f^{(-1)} - 2G G_f + 4 G_f G_f^{(-1)} \\
    &= (|G| - 4|G_f|)\,G + 4 G_f G_f^{(-1)}.
\end{align*}
Write $G_fG_f^{(-1)}=|G_f|+\sum \limits_{x(\neq 1_G)\in G}a_x x$, where $a_x$ are non-negative integers. Substituting this into the above expression, we obtain
\begin{align}\label{1st_expresion_of_phi}
    \phi(D_fD_f^{-1})=|G| + \sum_{x(\neq 1)\in G}(|G|-4|G_f|+4a_x)x
\end{align}
\noindent On the other hand, from Equation \eqref{D_fD_f-1 =_sum_of E_x}, we have
\begin{align}\label{2nd_expression_ofPhi}
    \phi(D_fD_f^{(-1)})=|G|+\sum_{x(\neq 1)\in G} \phi(E_x)x
\end{align}
Comparing Equations \eqref{1st_expresion_of_phi} and \eqref{2nd_expression_ofPhi}, we have $\phi(E_x)=|G|-4|G_f|+4a_x$.
Finally, by replacing $f$ with $f + m'$ if necessary, we may assume that
\[
|G_f| \leq \frac{|G| - 1}{2}.
\]
\begin{theorem}\label{non_exist_n=2and_m_is_even}
    There does not exist a GBF $ f:\mathbb{Z}_3^2 \to \mathbb{Z}_{2m'}$, where $m' = \prod_{j=1}^tp_j^{\alpha_j}$ with primes satisfying $5 \leq p_1 < p_2 < \cdots < p_t$ and $\alpha_j$ are positive integers.
    
\end{theorem}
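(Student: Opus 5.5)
The plan is to combine the parity map $\phi$ defined just before the statement with the list of possible forms of $E_x$ for $n=2$ from Lemma~\ref{expression of E_x}. This should pin down $|G_f|$ and the integers $a_x$ almost completely, leaving a short list of cases to rule out one by one. The case $G_f=\emptyset$ is already excluded by Lemma~\ref{no_gbf_for_empty_G_f}, and after replacing $f$ by $f+m'$ we may assume $1\le s:=|G_f|\le 4$.

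\textbf{Step 1 (evaluate $\phi(E_x)$).} Under $\phi$ (with $g\mapsto -1$), the generator $g^{m'}$ of $D_2$ has odd exponent, so $\phi(D_2)=0$. Since $2m'/5$ and $2m'/7$ are even, $\phi(D_5)=5$ and $\phi(D_7)=7$, while $\phi(h)=\pm1$ for every $h\in C_{2m'}$. Lemma~\ref{expression of E_x} therefore gives $\phi(E_x)\in\{\pm5,\pm7\}$. Comparing with $\phi(E_x)=9-4s+4a_x$, which is $\equiv 1 \pmod 4$, leaves only $\phi(E_x)=5$, i.e.\ $a_x=s-1$, or $\phi(E_x)=-7$, i.e.\ $a_x=s-4$. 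Because $a_x\ge0$, the second option is possible only when $s=4$.

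\textbf{Step 2 (counting).} Let $N$ be the number of $x\neq 1_G$ with $a_x=s-4$. Counting ordered pairs of distinct elements of $G_f$ gives $\sum_{x\neq1}a_x=s(s-1)$, hence $8(s-1)-3N=s(s-1)$.
\begin{itemize}
\item For $s<4$ we have $N=0$, so $s=1$.
\item For $s=4$ we get $N=4$. Then $G_fG_f^{(-1)}=4+3S$, where $S$ is a $4$-set closed under inverses.
\end{itemize}
I would kill $s=4$ combinatorially in $\mathbb{Z}_3^2$. A difference $d$ occurs three times only if some line parallel to $\langle d\rangle$ lies entirely in $G_f$: two-point lines contribute one ordered pair each, and three such lines would need $6>4$ points. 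So $G_f$ is a line together with one extra point $b$. But then the differences $\pm(b-c)$, with $c$ on the line, give six further distinct nonzero differences, so at least $8>4$ distinct differences occur, a contradiction.

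\textbf{Step 3 (the case $s=1$; main obstacle).} Translate so that $G_f=\{1_G\}$. Then $\phi(E_x)=5$ for every $x\neq 1_G$, which forces form (i) of Lemma~\ref{expression of E_x}: $E_x=D_2(h_1+h_2)+D_5h_3$ with $h_3$ of even exponent.
\begin{itemize}
\item The only odd-exponent terms in $E_x$ are $o_1=g^{f(x)-f(0)}$ and $o_2=g^{f(0)-f(-x)}$. Hence $D_2(h_1+h_2)=(1+g^{m'})(o_1+o_2)$.
\item Put $F=f$ off $1_G$ and $F(0)=f(0)+m'$. Then $F$ is even everywhere, so $F/2$ is a map $\mathbb{Z}_3^2\to\mathbb{Z}_{m'}$.
\item Since $\zeta_{2m'}^{m'}=-1$, we have $B_f=B_F-2\zeta_{2m'}^{F(0)}1_G$. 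So for every character $\chi$, $\chi(B_F)=\chi(B_f)+2\zeta_{2m'}^{F(0)}$, with $|\chi(B_f)|=3$ and $\chi(B_F)\in\mathbb{Z}[\zeta_{m'},\zeta_3]$.
\item Equivalently, the autocorrelations of $F/2$ are $\pi(D_5h_3)+2\pi(g^{m'}(o_1+o_2))$, where $\pi$ is the projection onto $C_{m'}$.
\end{itemize}
My first attempt would be to apply an order-$m'$ character and use Fourier inversion on $\sum_x 2(\tau(o_1')+\tau(o_2'))x$. The aim is to show that the resulting values $|\chi(B_{F/2})|^2=9+2\sum_x(\cdots)$ cannot simultaneously be norms in $\mathbb{Z}[\zeta_{m'},\zeta_3]$ of the shape $|\alpha+2\zeta|^2$ with $|\alpha|^2=9$.

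If that fails, I would exploit the rigid structure $E_x=(1+g^{m'})(o_1+o_2)+D_5h_3$ directly. Summing over $x$ and $-x$ relates $o_1(x)$ to $o_2(-x)$. Imposing this relation for all eight $x$ on the single free value $f(0)$ should force $f(x)-f(0)$ to take too few distinct values to produce a $D_5$-coset among the remaining seven differences. I expect this final case to be the real difficulty; Steps 1 and 2 are routine.
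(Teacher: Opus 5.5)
The gap is Step 3, and it cannot be closed, because the statement is false for $m'=5$.

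Your Steps 1 and 2 are correct, and cleaner than the paper's case-by-case treatment of $|G_f|\in\{2,3,4\}$. The congruence $9-4s+4a_x\equiv 1 \pmod 4$ leaves only $\phi(E_x)\in\{5,-7\}$. The count $\sum_{x\neq 0}a_x=s(s-1)$ becomes $3N=(s-1)(8-s)$, and the line argument rules out $s=4$.

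Step 3, however, is a list of things to try rather than an argument. No argument can fill it, since the case $|G_f|=1$ actually occurs. Your own reformulation finds the example. With $F(0)=0$ and $h=F/2$, you need $|\chi(B_h)-2|^2=9$ for every $\chi$. Take $h$ constant on each punctured line through the origin, with the four lines receiving the four nonzero residues mod $5$.

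Explicitly, define $f:\mathbb{Z}_3^2\to\mathbb{Z}_{10}$ by $f(0,0)=5$, $f=2$ on $\pm(1,0)$, $f=4$ on $\pm(0,1)$, $f=6$ on $\pm(1,1)$, $f=8$ on $\pm(1,2)$, and put $\xi=\zeta_{10}^2$.
\begin{itemize}
\item The trivial character gives $-1+2(\xi+\xi^2+\xi^3+\xi^4)=-3$.
\item Let $\chi\neq\chi_0$ have kernel the line $L$ on which $f=2a$. Each of the two other cosets of $L$ meets each of the other three lines through the origin exactly once. So both coset sums equal $-1-\xi^{a}$, and $\chi(B_f)=(-1+2\xi^{a})-(-1-\xi^{a})=3\xi^{a}$.
\end{itemize}
Hence $|\chi(B_f)|^2=9$ for all nine characters, so $f$ is a GBF. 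For instance, $E_{(1,0)}=D_5+D_2(g^2+g^3)$, which is form (i) with even $\gamma$; nothing in your Step 3 setup is violated. By the scaling $g=lf$, GBFs $\mathbb{Z}_3^2\to\mathbb{Z}_{2m'}$ then exist whenever $5\mid m'$.

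The paper's proof breaks at the same case. After reducing to $m=2\cdot 5^a$, it claims $\psi(D_f)\psi(D_f^{(-1)})$ must have a negative coefficient because $f(x)-f(1_G)$ is odd. But $\psi$ lands in $\mathbb{Z}[C_{5^a}]$, a group of odd order, where the parity of exponents is invisible. For the example above, $\psi(D_f)=2K-3$ with $K=\sum_{c\in C_5}c$. Then $\psi(D_f)\psi(D_f^{(-1)})=9+8K$, which has no negative coefficient.

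What your Steps 1--2 do establish is the corrected statement. If $5\nmid m'$, then $s=1$ forces $\phi(E_x)=5$ for all $x\neq 1_G$. That requires form (i), with a $D_5$ factor that does not exist in $C_{2m'}$, so there is no GBF. Combined with the example: for $\gcd(m',6)=1$, a GBF $\mathbb{Z}_3^2\to\mathbb{Z}_{2m'}$ exists if and only if $5\mid m'$.
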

\begin{proof}
   Assume that there exists a GBF $f : \mathbb{Z}_3^2 \rightarrow \mathbb{Z}_{2m'}$. Then $E_x$ is a $v$-sum for every $x\in G\setminus\{1_G\}$. 
   By Lemma~\ref{expression of E_x}, the possible forms of $E_x$ are
   \begin{enumerate}[label=(\roman*)]
       \item $E_x = D_2(g^{\alpha_x}+g^{\beta_x})+D_5g^{\gamma_x}, \quad \text{and} \hspace{2mm} \phi(E_x)=\pm 5$,
       \item $E_x = D_2g^{\alpha_x}+D_7g^{\beta_x}, \quad \text{and} \hspace{2mm}\phi(E_x)=\pm 7$.
   \end{enumerate}
Hence, the possible values of $\phi(E_x)$ are $\pm 5,\pm 7$. 
   From Lemma \ref{no_gbf_for_empty_G_f}, we may assume that $|G_f| \geq 1$. Adding $m'$ to $f$ if necessary, we may assume that $|G_f| \leq \frac{|G|-1}{2}=4$. We may also assume that $f(1_G)$ is odd. We now consider the following cases.

   \medskip
   \noindent \textbf{Case 1 :} $|G_f|=4$.

   \medskip
   \noindent \textbf{Case 1.1} $G_f = \{1, u, v, w\}$, where $u,v,w$ are distinct elements such that none of $u = v^2$, $v = w^2$, or $u = w^2$ holds. Considering $G_f$ as an element of the group ring $\mathbb{Z}[G\cdot C_m]$ we have
    Then
   \begin{align*}
       G_fG_f^{(-1)} &= (1+u+v+w)(1+u^2+v^2+w^2)\\
       &=4+ u+v+w +u^2+v^2+w^2+uv^2+uw^2+vu^2+vw^2+wu^2+wv^2.
   \end{align*}
Since this expression contains more than $|G| = 9$ terms, some terms must coincide. The term involving $2$ variables cannot be equal to another term with $2 $ variables since this would contradict the condition of this case. Without loss of generality, we may assume that $u$ coincides with one of the terms on the right-hand side. Since $u \neq v^{2},v \neq w^{2}, \text{and} \hspace{2mm} u \neq w^2 $, it follows that either $u=vw^2$ or $u=wv^2$. In both the cases $a_u=2$.  Therefore,
\[
\phi(E_u) = |G| - 4|G_f| + 4a_u = 9 - 16 + 8 = 1,
\]
which is impossible, since $\phi(E_x) \in \{\pm 5, \pm 7\}$.

\medskip
\noindent \textbf{Case 1.2} : $G_f =\{1,v,v^2,w\}$. \\
Then
\begin{align*}
    G_fG_f^{-1} &= (1+v+v^2+w)(1+v+v^2+w^2)\\
    &=  4+3v+3v^2+w+w^2+vw^2+vw+v^2w^2+v^2w.
\end{align*}
As $a_w=1$, which implies $\phi(E_w)=-3$. This is impossible, since the only admissible values of $\phi(E_x)$ are $\pm 5$ and $\pm 7$.

\medskip
\noindent \textbf{Case 2:} $|G_f|=3$.\\
\textbf{Case 2.1}: $G_f=\{1,v,w\}$, where  $v$ and $w$ are distinct element of $G$ with $w \neq v^2$.
\begin{align*}
    G_fG_f^{(-1)}&=(1+v+w)(1+v^2+w^2)\\
    &=3+v+w+v^2+w^2+vw^2+wv^2.
\end{align*} 
Clearly, $a_v=1$, and hence $\phi(E_v)=|G|-4|G_f|+4a_v=1$. Again, this is impossible, since the only possible values of $\phi(E_x)$ are $\pm 5$ and $\pm 7$.

\medskip
\noindent \textbf{Case 2.2}: $G_f=\{1,v,v^2\}$.
\begin{align*}
    G_fG_f^{(-1)}&=(1+v+v^2)(1+v+v^2)\\
    &=3+3v+3v^2.
\end{align*}
Hence $a_v = 3$, and
\[
\phi(E_v) = 9 - 12 + 12 = 9,
\]
which is again impossible.

\medskip
\noindent \textbf{Case 3:} $|G_f|=2$.\\
Let $G_f=\{1,v\}$, then $G_fG_f^{(-1)}=2+v+v^2$. We have $a_x=0 $ for all $x \in G\setminus{\{1,v,v^2\}}$. Therefore, $\phi(E_x)=|G|-4|G_f|+4a_x=1$. This is again impossible, since the possible values of $\phi(E_x)$ are $\pm 5$ and $\pm 7$.

\medskip
\noindent\textbf{Case 4:} $|G_f|=1$.\\
In this case, $G_fG_f^{(-1)}=1_G$, and hence $a_x=0$ for all $x\in G\setminus \{1_G\}$. This implies that $\phi(E_x)=5$ for all $x\in G\setminus \{1_G\}$. From Lemma \ref{expression of E_x}, it follows that $E_x=D_2(g^{\alpha_x}+g^{\beta_x})+D_5g^{\gamma_x}$ for all $x\in G\setminus \{1_G\}$, where $g^{\alpha_x},g^{\beta_x}, \hspace{2mm}\text{and} \hspace{2mm}g^{\gamma_x}$ are elements of $ C_{2m'}$, and $\gamma_x$ is an even integer. \\
Therefore, if a GBF $f : \mathbb{Z}_3^2 \to \mathbb{Z}_{2m'}$ exists, then necessarily $|G_f| = 1$ and
\[
E_x = D_2\bigl(g^{\alpha_x} + g^{\beta_x}\bigr) + D_5 g^{\gamma_x}
\quad \text{for all } x \in G \setminus \{1_G\},
\]
with $\gamma_x$ even.

Note that the possible value of $c$ - exponent of $E_x$ is $10$ for every $x\in G\setminus \{1_G\}$. By Lemma \ref{existence_of_the_gbf_to_lower_order}, we may assume that there exists a GBF $f :\mathbb{Z}_3^2\rightarrow \mathbb{Z}_{2 \cdot 5^a}$. Let $g_2$ and $g_{5^a}$ be elements of $C_{2\cdot5^a}$ of order $2$ and $5^a$, respectively.
Then $g=g_2g_{5^a}$ is a generator of $C_{2\cdot 5^{a}}$. Define a ring homomorphism $\psi:\mathbb{Z}[G\cdot C_{2\cdot 5^a}] \rightarrow \mathbb{Z}[C_{5^a}]$ such that $\psi(g_2)=-1$, $\psi(g_{5^a})= g_{5^a}$, and $\psi(x)=1$ for all $x\in G$. Then
\begin{align*}
    \psi(D_f)&=\sum_{x\in G}\psi(g_2g_5^a)^{f(x)} \psi(x) = \sum_{x\in G}(-g_{5^a})^{f(x)}\\
    & = \sum_{\substack{x\in G\\x\neq 1_G} }(g_{5^a})^{f(x)}-g_{5^a}^{f(1_G)},
\end{align*}
since $f(x)$ is even for $x \neq 1_G$ and odd for $x = 1_G$.
Consequently,
\begin{align*}
  \psi(D_f)\psi(D_f^{(-1)}) & = \left(\sum_{\substack{x\in G\\x\neq 1_G} }(g_{5^a})^{f(x)}-g_{5^a}^{f(1_G)}\right)\left(\sum_{\substack{l\in G\\l\neq 1_G} }(g_{5^a})^{-f(l)}-g_{5^a}^{-f(1_G)}\right)\\
  &= 9+ \sum_{\substack{x\neq l\\x,l \neq 1_G}}g_{5^{a}}^{f(x)-f(l)} -\sum_{\substack{x\in G\\x \neq 1_G}}g_{5^a}^{f(x)-f(1_G)} -\sum_{\substack{l\in G\\l \neq 1_G}}g_{5^a}^{f(1_G)-f(l)}.
\end{align*}
Note that $f(x)$ is even for all $x \in G \setminus \{1_G\}$. Therefore, for $x,l \in G \setminus \{1_G\}$, the difference $f(x) - f(l)$ is even, whereas $f(x) - f(1_G)$ is odd for all $x \in G \setminus \{1_G\}$. It follows that some coefficients in $\psi(D_f)\psi(D_f^{(-1)})$ are negative. On the other hand, for all $x \in G \setminus \{1_G\}$, we have $E_x = D_2(g^{\alpha_x}+g^{\beta_x})+D_5g^{\gamma_x}$ where $\gamma_x$ is even. Since $D_5=1+g_{5^a}+g_{5^a}^2+g_{5^a}^3+g_{5^a}^4$, it follows that $\psi(E_x)=D_5~g_{5^a}^{\gamma_x}$. Using Equation \eqref{D_fD_f-1 =_sum_of E_x}, we have 
\begin{align}\label{psi(D_f)\psi(D_f)_in _terms_of_psi(E_x)}
    \psi(D_f)\psi(D_f^{(-1)})&=9+\sum_{\substack{x\in G\\x\neq 1_G}}\psi(E_x) \nonumber\\
    &=9+ \sum_{\substack{x\in G\\x\neq 1_G}}D_5 ~g_{5^a}^{\gamma_x}. \nonumber
\end{align}
    This expression shows that all coefficients of $\psi(D_f)\psi(D_f^{(-1)})$ are non-negative integers. This contradicts the earlier observation that some coefficients must be negative.  
 Therefore, $|G_f| \neq 1$. Hence, there does not exist GBF from $\mathbb{Z}_3^n$ to $\mathbb{Z}_{2m'}$.
\end{proof}

\section{Conclusion}
In this concluding section, we summarize our main results on generalized bent functions from $\mathbb{Z}_3^n$ to $\mathbb{Z}_m$. We observed that GBFs exist whenever $m$ is divisible by $3$. On the other hand, when $m$ is not divisible by $3$, we obtained several nonexistence results. In particular, for $n=1,2$, we proved that there does not exist any GBF when $m$ is odd and not divisible by $3$. We also showed that there does not exist a GBF
$
f:\mathbb{Z}_3^3 \rightarrow \mathbb{Z}_{5\cdot 11^r}
$.

We further investigated the case where $m$ is even and not divisible by $3$. In particular, we proved that there does not exist a GBF
$f:\mathbb{Z}_3^n \rightarrow \mathbb{Z}_{2^k}$ for any positive integer $n$. Moreover, we established the nonexistence of GBFs from $\mathbb{Z}_3$ to $\mathbb{Z}_{2m'}$ and from $\mathbb{Z}_3^2$ to $\mathbb{Z}_{2m'}$, where $m'$ is odd and not divisible by $3$.

However, the cases where $m$ is odd and not divisible by $3$, and where $m$ is even and not divisible by $3$, are still not completely understood for larger values of $n$. It remains open whether GBFs exist in these cases or whether stronger nonexistence results can be obtained.

Future work may focus on extending the methods developed in this paper to the remaining unresolved cases. It would also be interesting to study explicit constructions of GBFs whenever they exist and to investigate similar problems for other finite abelian groups. Overall, this work provides a better understanding of GBFs on $\mathbb{Z}_3^n$ and suggests several directions for future research.

\section*{Acknowledgements}

\noindent The first author is supported by a Senior Research Fellowship from CSIR, Government of India (File No. 09/1020(15619)/2022-EMR-I).

\noindent\textbf{Data Availability :}  No data was gathered or used in this paper, so a ``data availability statement'' is not applicable.

\noindent\textbf{Conflict of interest :} The author states that there is no conflict of interest.

\end{document}